\tikzset{->-/.style={decoration={
			markings,
			mark=at position #1 with {\arrow{>}}},postaction={decorate}}} 
\tikzset{-<-/.style={decoration={
					markings,
					mark=at position #1 with {\arrow{<}}},postaction={decorate}}}
\newcommand{\C}{\mathbb{C}}
\newcommand{\cS}{\mathcal{S}}
\newcommand{\cT}{\mathcal{T}}
\newcommand{\surf}{\mathcal{M}}
\newcommand{\del}{\partial}
\newcommand{\msurf}{\mathring{\surf}} 
\newcommand{\mcT}{\mathring{\cT}} 
\newcommand{\cE}{\mathcal{E}} 
\newcommand{\cF}{\mathcal{F}} 
\newcommand{\bF}{\mathbb{F}} 
\newcommand{\mI}{\mathring{I}} 
\newcommand{\mR}{\mathring{R}} 
\newcommand{\md}{\mathring{d}} 
\newcommand{\me}{\mathring{e}} 
\newcommand{\delout}{\del^{\mathrm{out}}} 
\newcommand{\scU}{\mathscr{U}} 
\newcommand{\scV}{\mathscr{V}} 
\newcommand{\mscV}{\mathring{\scV}} 
\newcommand{\scX}{\mathscr{X}} 
\newcommand{\scY}{\mathscr{Y}} 
\newcommand{\Mon}{\mathbb{M}} 
\newcommand{\uw}{\underline{w}} 
\newcommand{\ba}{\mathbf{a}} 
\newcommand{\bb}{\mathbf{b}} 
\newcommand{\bd}{\mathbf{d}} 
\newcommand{\bx}{\mathbf{x}} 
\newcommand{\by}{\mathbf{y}} 
\newcommand{\bp}{\mathbf{p}} 
\newcommand{\bq}{\mathbf{q}} 
\newcommand{\cont}{\mathfrak{c}} 
\newcommand{\Cont}{\mathfrak{C}} 
\DeclareMathOperator{\Diff}{Diff}
\newcommand{\Diffd}{\Diff_\del}
\DeclareMathOperator{\Q}{\mathbb{Q}}
\DeclareMathOperator{\Z}{\mathbb{Z}}
\mathchardef\ordinarycolon\mathcode`\:
\newcommand{\pa}[1]{\left(#1\right)}
\newcommand{\set}[1]{\left\{#1\right\}}
\theoremstyle{plain}
\newtheorem{theorem}{Theorem}[section]
\newtheorem{proposition}[theorem]{Proposition}
\newtheorem{lemma}[theorem]{Lemma}
\newtheorem{corollary}[theorem]{Corollary}
    \newtheorem{atheorem}{Theorem}
    \newtheorem{aquestion}{Question}
    \newtheorem{aconjecture}{Conjecture}
\theoremstyle{definition}
\newtheorem{definition}[theorem]{Definition}
\newtheorem{notation}[theorem]{Notation}
\theoremstyle{remark}
\numberwithin{equation}{section}
\title[Non-trivial action of the Johnson filtration on homology]{Non-trivial action of the Johnson filtration on the homology of configuration spaces}
\author{Andrea Bianchi}
\thanks{
A.B. was supported by the
\emph{European Research Council} under the
European Union’s Horizon 2020 research and innovation programme (grant agreement No. \texttt{772960}),
and by the
\emph{Danish National Research Foundation} through the \emph{Copenhagen Centre for
Geometry and Topology} (\texttt{DNRF151}). A.S. was funded by a studentship of the \emph{Engineering and Physical Sciences Research Council} (project reference: 2261124).
}
\email{anbi@math.ku.dk}
\address{Mathematical Institute, University of Copenhagen, Universitetsparken 5, Copenhagen, DK}
\author{Andreas Stavrou}
\email{as2558@cam.ac.uk}
\address{Centre for Mathematical Sciences, University of Cambridge, Wilberforce Road, Cambridge CB3 0WB, UK}
\keywords{Configuration spaces, Johnson's filtration}
\subjclass[2020]{
55R80, 
}
\date{\today}
\begin{document}
\maketitle

\begin{abstract}
    We let the mapping class group $\Gamma_{g,1}$ of a genus $g$ surface $\Sigma_{g,1}$ with one boundary component act on the homology
    $H_*(F_{n}(\Sigma_{g,1});\Q)$ of the $n$\textsuperscript{th} ordered configuration space of the surface. We prove that the action is non-trivial when restricted to the $(n-1)$\textsuperscript{st} stage of the Johnson filtration, for all $n\ge 1$ and $g\ge 2$. We deduce an analogous result for closed surfaces.
\end{abstract}

\section{Introduction}
\subsection{Background and statement of results}
Let $Z$ be a topological space. For $n\ge1$, we denote by $F_n(Z)$ the $n$\textsuperscript{th} ordered configuration space
\[
F_n(Z)=\set{(z_1,\dots,z_n)\in Z^n\,|\,z_i\neq z_j\mbox{ for }i\neq j}.
\]
Let $g\ge2$ and let $\surf=\Sigma_{g,1}$ denote a compact orientable surface of genus $g$ with one boundary curve. We are interested in the configuration spaces $F_n(\surf)$, and in particular in their homology groups, regarded as representations of the mapping class group $\Gamma_{g,1}=\pi_0(\Diffd(\surf))$. More concretely, we are interested in the following question.
\begin{aquestion}
\label{que:main}
What is the kernel of the action of $\Gamma_{g,1}$ on $H_*(F_n(\surf))$? In other words, which mapping classes in $\Gamma_{g,1}$ act trivially on $H_*(F_n(\surf))$?
\end{aquestion}
The Johnson filtration of the mapping class group
\[
\Gamma_{g,1}= J_{g,1}(0)\supseteq J_{g,1}(1)\supseteq J_{g,1}(2)\supseteq J_{g,1}(3)\dots
\]
seems to play a key role in answering Question \ref{que:main}: for a fixed basepoint $*\in\partial\surf$,
Moriyama \cite{Moriyama} proves that $J_{g,1}(n)$ is precisely the kernel of the action of $\Gamma_{g,1}$ on the homology of the quotient of $F_n(\surf)$ by the subspace of configurations $(z_1,\dots,z_n)$ satisfying $z_i=*$ for some $1\le i\le n$. Leveraging on this result, the first author, Miller and Wilson \cite{bianchi2021mapping} prove that $J_{g,1}(n)$ is contained in the kernel of the action of $\Gamma_{g,1}$ on $H_*(F_n(\surf))$, for all $n\ge0$, and make the following conjecture.
\begin{aconjecture}
\label{conj:main}
The kernel of the action of $\Gamma_{g,1}$ on $H_*(F_n(\surf))$ is the subgroup of $\Gamma_{g,1}$ generated by $J_{g,1}(n)$ and the boundary Dehn twist $T_{\partial\surf}$.
\end{aconjecture}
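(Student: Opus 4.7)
The plan is to establish both inclusions of the conjectured equality. The inclusion $\langle J_{g,1}(n), T_{\partial\surf}\rangle \subseteq \ker$ follows immediately from cited results: the Bianchi--Miller--Wilson theorem handles $J_{g,1}(n)$, and $T_{\partial\surf}$ acts trivially on $H_*(F_n(\surf))$ since it is supported in a collar of $\partial\surf$, while $F_n(\surf)$ deformation-retracts onto configurations in the interior, which $T_{\partial\surf}$ fixes pointwise. The substance of the conjecture is therefore the reverse inclusion, which I would attempt by induction on $n$, with base cases $n=0,1$ being immediate (Torelli is the kernel of the symplectic representation).

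For the inductive step, suppose the conjecture holds up to $n-1$, and let $\phi\in\Gamma_{g,1}$ act trivially on $H_*(F_n(\surf);\Q)$. The forgetful map $F_n(\surf)\to F_{n-1}(\surf)$ admits a $\Gamma_{g,1}$-equivariant section (add a fixed extra point near $\partial\surf$), so $H_*(F_{n-1}(\surf))$ is a $\Gamma_{g,1}$-equivariant retract of $H_*(F_n(\surf))$. Hence $\phi$ acts trivially on $H_*(F_{n-1}(\surf))$, and by the inductive hypothesis $\phi\in\langle J_{g,1}(n-1),T_{\partial\surf}\rangle$. Modulo the boundary twist, we may assume $\phi\in J_{g,1}(n-1)$, and the task reduces to showing $\phi\in J_{g,1}(n)\cdot\langle T_{\partial\surf}\rangle$. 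Equivalently, we must show that the map
\[
\Theta_n: J_{g,1}(n-1)/J_{g,1}(n)\longrightarrow\End\pa{H_*(F_n(\surf);\Q)}
\]
induced by the action is injective modulo the image of $T_{\partial\surf}$.

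The strategy for the injectivity of $\Theta_n$ is to leverage the Johnson homomorphism $\tau_{n-1}\colon J_{g,1}(n-1)/J_{g,1}(n)\hookrightarrow H_1(\surf;\Z)\otimes L_n(H_1(\surf;\Z))$ together with Moriyama's theorem, which identifies $J_{g,1}(n)$ as the exact kernel of the action on the homology of the \emph{relative} configuration space (the quotient of $F_n(\surf)$ by configurations meeting the basepoint). The plan is to lift Moriyama's faithful representation to the non-relative homology $H_*(F_n(\surf))$ by constructing, for each nonzero element in the image of $\tau_{n-1}$, an explicit $n$-dimensional cycle in $F_n(\surf)$ modelled on products of small loops following the standard symplectic basis curves of $\surf$, and computing the action of $\phi$ on such a cycle via commutator identities dictated by the Johnson filtration.

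The principal obstacle is precisely this last step: producing enough detecting cycles in $H_*(F_n(\surf))$ to realise the full image of $\tau_{n-1}$, not merely a single nonzero value. The current paper appears to execute this construction for one carefully chosen element of $J_{g,1}(n-1)/J_{g,1}(n)$, which suffices to prove non-triviality of the action but falls short of the injectivity required for the full conjecture. Upgrading non-triviality to injectivity requires controlling the long exact sequence of the pair (total configurations, configurations meeting the basepoint) and ruling out cancellations between the absolute and relative contributions to $\Theta_n$, and this is plausibly why the statement remains conjectural.
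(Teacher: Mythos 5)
The statement you are attempting is Conjecture~\ref{conj:main}, which the paper does not prove: it is an open conjecture, and the paper's actual result (Theorem~\ref{thm:main}) only establishes \emph{non-triviality} of the action of $J_{g,1}(n-1)$ on $H_n(F_n(\surf))$, i.e.\ a single detecting triple $(\phi,x,y)$. Your proposal does not close this distance either. The easy inclusion ($J_{g,1}(n)$ and $T_{\partial\surf}$ act trivially) and the inductive reduction via the stabilisation retraction are fine in outline, but the entire content of the conjecture is the step you label as ``injectivity of $\Theta_n$'': showing that \emph{every} nontrivial coset in $J_{g,1}(n-1)/J_{g,1}(n)$ not accounted for by powers of $T_{\partial\surf}$ acts nontrivially on $H_*(F_n(\surf))$. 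You propose to realise the full image of the Johnson homomorphism $\tau_{n-1}$ by detecting cycles, but you give no construction of such cycles beyond the single element that the paper itself handles, and you explicitly concede that this is the missing step. A reduction of an open conjecture to an unproved injectivity statement, with the injectivity left as ``the principal obstacle,'' is not a proof; it is a restatement of the problem.

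Two smaller points in the parts you do argue. First, the claimed $\Gamma_{g,1}$-equivariant section of $F_n(\surf)\to F_{n-1}(\surf)$ by ``adding a fixed extra point near $\partial\surf$'' is not literally a section (the fixed point can collide with the configuration); one needs the standard boundary stabilisation, which pushes the configuration off a collar before adding the point, and which is equivariant only up to homotopy --- enough for the retraction on homology, but it should be said. Second, the phrase ``injective modulo the image of $T_{\partial\surf}$'' presupposes that $T_{\partial\surf}$ defines an element of $J_{g,1}(n-1)/J_{g,1}(n)$; this fails for $n-1\ge 3$, since $T_{\partial\surf}$ acts on $\pi$ by conjugation by the boundary word $\zeta\in\gamma_1\pi$ and hence lies in $J_{g,1}(2)$ but not in $J_{g,1}(3)$. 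The correct formulation is that the kernel of $J_{g,1}(n-1)/J_{g,1}(n)\to\GL(H_*(F_n(\surf)))$ should be the image of $J_{g,1}(n-1)\cap\bigl(J_{g,1}(n)\cdot\langle T_{\partial\surf}\rangle\bigr)$; as written, your reduction quietly conflates these, and the bookkeeping of where the powers of the boundary twist live is exactly one of the points the conjecture's eventual proof would have to control.
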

The first main result of this article is a step towards a proof of Conjecture \ref{conj:main}.
\begin{atheorem}
\label{thm:main}
For $g\ge2$ and $n\ge1$, the $(n-1)$\textsuperscript{st} stage of the Johson filtration $J_{g,1}(n-1)$ acts non-trivially on $H_n(F_n(\surf))$.
\end{atheorem}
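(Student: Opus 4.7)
My plan is to deduce the theorem from Moriyama's theorem and the result of \cite{bianchi2021mapping}, both recalled in the introduction, via a long exact sequence argument. Let $A\subset F_n(\surf)$ denote the subspace of configurations $(z_1,\dots,z_n)$ with $z_i=*$ for some $i$, so that $B_n(\surf):=F_n(\surf)/A$ is the space appearing in Moriyama's statement. A key observation is that $A=\bigsqcup_{i=1}^n A_i$ decomposes as a \emph{disjoint} union of the subspaces $A_i:=\{z_i=*\}$: if two coordinates simultaneously equalled $*$ they would coincide, contradicting the configuration condition. Each $A_i$ is $\Gamma_{g,1}$-equivariantly homotopy equivalent to $F_{n-1}(\surf)$ (since $*\in\partial\surf$, the inclusion $\surf\setminus\{*\}\hookrightarrow\surf$ is an equivariant homotopy equivalence). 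Applying \cite{bianchi2021mapping} at level $n-1$ then gives that $J_{g,1}(n-1)$ acts trivially on $H_*(F_{n-1}(\surf);\Q)$, and therefore trivially on $H_*(A;\Q)=\bigoplus_{i=1}^n H_*(F_{n-1}(\surf);\Q)$.

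The cofiber sequence $A\hookrightarrow F_n(\surf)\twoheadrightarrow B_n(\surf)$ yields a $\Gamma_{g,1}$-equivariant long exact sequence in rational homology
\[\cdots\to H_n(A;\Q)\to H_n(F_n(\surf);\Q)\xrightarrow{q_*} H_n(B_n(\surf);\Q)\xrightarrow{\partial} H_{n-1}(A;\Q)\to\cdots.\]
Pick any $\phi\in J_{g,1}(n-1)\setminus J_{g,1}(n)$. Moriyama's theorem guarantees that $\phi$ acts non-trivially on $H_*(B_n(\surf);\Q)$; I will argue below that this non-triviality can be concentrated in degree exactly $n$. Because $\phi$ acts trivially on $H_{n-1}(A;\Q)$ by the previous paragraph, equivariance of $\partial$ gives $\partial(\phi\cdot\xi-\xi)=0$, so $\phi\cdot\xi-\xi\in\ker\partial=\im(q_*)$ for every $\xi\in H_n(B_n(\surf);\Q)$. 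Choosing $\xi$ with $\phi\cdot\xi\neq\xi$ and lifting it to $\tilde\xi\in H_n(F_n(\surf);\Q)$ via $q_*$ yields $q_*(\phi\cdot\tilde\xi-\tilde\xi)=\phi\cdot\xi-\xi\neq 0$, hence $\phi\cdot\tilde\xi\neq\tilde\xi$, as desired.

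The main obstacle is the degree refinement promised above: the quoted form of Moriyama's theorem only asserts non-triviality somewhere in $H_*(B_n(\surf);\Q)$. To extract detection in degree exactly $n$, one must revisit Moriyama's construction and identify an explicit $\Gamma_{g,1}$-equivariant surjection from $H_n(B_n(\surf);\Q)$ onto (a piece containing the image of) the $(n-1)$-st Johnson homomorphism $\tau_{n-1}$. I expect this to be feasible either by a direct reading of Moriyama's argument — where the relevant detection class is built from top-dimensional strata of $B_n(\surf)$ — or by reconstructing the pairing from scratch as an intersection-theoretic pairing of configuration-space cycles against explicit Johnson-type cohomology classes. Once this degree refinement of Moriyama is in hand, the long exact sequence argument closes the proof, treating the case $n=1$ separately by noting that $J_{g,1}(0)=\Gamma_{g,1}$ already acts non-trivially on $H_1(\surf;\Q)$ via the symplectic representation.
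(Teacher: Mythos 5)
The fatal step is the last one in your long exact sequence argument. From equivariance of $\partial$ and triviality of the $\phi$-action on $H_{n-1}(A;\Q)$ you correctly deduce that $\phi\cdot\xi-\xi\in\ker\partial=\im(q_*)$, but you then ``lift $\xi$ to $\tilde\xi$ via $q_*$'', which silently assumes $\xi\in\im(q_*)$ --- and nothing guarantees that. What you actually know is only that the \emph{difference} $\phi\cdot\xi-\xi$ lies in $\im(q_*)$; if instead you lift that difference to some $\eta\in H_n(F_n(\surf);\Q)$, you learn nothing about whether $\phi$ moves $\eta$ or any other absolute class. The obstruction is structural, not cosmetic: if $\phi$ acted trivially on $H_n(F_n(\surf);\Q)$, then $\phi$ would act trivially on the subgroup $\im(q_*)\subset H_n(F_n(\surf),A;\Q)$ and on the quotient $H_n(F_n(\surf),A;\Q)/\im(q_*)\hookrightarrow H_{n-1}(A;\Q)$, but it could still act non-trivially (unipotently) on the relative group itself. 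So ``non-trivial on Moriyama's relative homology $+$ trivial on $H_{*}(A)$'' simply does not imply ``non-trivial on $H_n(F_n(\surf))$''. The boundary Dehn twist $T_{\del\surf}$ illustrates exactly this failure mode: it acts trivially on $H_*(F_m(\surf))$ for all $m$ (which is why it appears in Conjecture \ref{conj:main}), hence trivially on $H_*(A)$, yet for $n$ large it is not in $J_{g,1}(n)$ and therefore, by Moriyama, acts non-trivially on the relative/quotient homology. Hence no purely formal argument from the two quoted inputs can close the gap; one needs genuinely new information about the absolute homology, which is why the paper proceeds quite differently: it constructs an explicit iterated commutator $\phi$ of bounding pairs in $J_{g,1}(n-1)$, explicit classes $x\in H_n(F_n(\surf))$ and $y\in H^n(F_n(\surf))$, and evaluates $\left<\phi_*(x)-x,y\right>=\pm1$ by localizing to subsurfaces and a word-content computation in a free group.

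By comparison, the other issue you flag (extracting non-triviality in degree exactly $n$ from Moriyama) is the lesser problem: Moriyama's relative homology $H_*(F_n(\surf),A)$ is in fact concentrated in degree $n$, so that refinement is available by citing his theorem in full. Two smaller points you would also need: the existence of $\phi\in J_{g,1}(n-1)\setminus J_{g,1}(n)$ for every $n$ and $g\ge2$ (strictness of the Johnson filtration) is an extra input requiring a reference, and the case $n=1$ you handle correctly. But even with all of these granted, the lifting/extension gap above makes the proposed deduction invalid as it stands.
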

We remark that the statement of Theorem \ref{thm:main} fails for $g=1$: the subgroup $J_{1,1}(2)\subset\Gamma_{1,1}$ is the cyclic subgroup generated by $T_{\partial\surf}$, and thus acts trivially on $H_*(F_n(\Sigma_{1,1}))$ for all $n\ge1$.

In order to prove Theorem \ref{thm:main}, we construct a triple $(\phi, x,y)$ of a mapping class $\phi\in J_{g,1}(n-1)$,
a homology class $x\in H_n(F_n(\surf))$ and a cohomology class $y\in H^n(F_n(\surf))$, and prove that the homology class $\phi_*(x)-x$ is non-trivial by proving that the Kronecker pairing $\left< \phi_*(x)-x,y\right>\in\Z$ does not vanish. The same method has been used in \cite{Bianchi2020,bianchi2021mapping} to prove the statement of Theorem \ref{thm:main} for $n\le3$. In order to treat the general case, we develop a calculus for higher \textit{contents} of elements of free groups; roughly speaking, a content of a word $\uw$ in a free group counts the number of subwords of a certain type; a similar method has been used by the second author in \cite{stavrou2021} in the study of the rational homology of unordered configuration spaces of surfaces. After the construction of $(\phi, x,y)$, we identify the relevant Kronecker pairings with certain contents, and then we compute these contents.

We then shift our focus to the following variant of Question \ref{que:main} for closed surfaces.
\begin{aquestion}
\label{que:main2}
Let $g\ge2$ and $n\ge1$, let $\Sigma_g$ denote a closed, orientable surface of genus $g$. What is the kernel of the action of the mapping class group $\Gamma_g=\pi_0(\Diff^+(\Sigma_g))$ on $H_*(F_n(\Sigma_g))$?
\end{aquestion}
Let us define $J_g(n)\subset\Gamma_g$ as the image of $J_{g,1}(n)$ along the natural, surjective group homomorphism $\Gamma_{g,1}\to\Gamma_g$.
Looijenga \cite{Looijenga} has proved that the Torelli group $J_g(1)\subset\Gamma_g$ acts non-trivially on $H_*(F_3(\Sigma_g))$ for $g\ge3$. On the other hand, the arguments of \cite{bianchi2021mapping} can be adapted to show that for $g\ge1$ and $n\ge1$ there is a $\Gamma_{g,1}$-equivariant, finite chain complex computing the homology of $F_n(\Sigma_g)$ as a $\Gamma_{g,1}$-representation, and such that $J_{g,1}(n)$ acts trivially on this complex: hence $J_g(n)$ acts trivially on $H_*(F_n(\Sigma_g))$.

As a corollary of Theorem \ref{thm:main} we will prove the following, which is the second main result of the paper.
\begin{atheorem}
\label{thm:main2}
For $g\ge2$ and $n\ge1$, the subgroup $J_{g+1}(n-1)\subset\Gamma_{g+1}$ acts non-trivially on $H_{n+1}(F_{n+1}(\Sigma_{g+1}))$. 
\end{atheorem}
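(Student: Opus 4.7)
My plan is to deduce Theorem \ref{thm:main2} from Theorem \ref{thm:main} via a geometric ``push-in'' construction. Decompose $\Sigma_{g+1}=\surf\cup_\partial \Sigma_{1,1}'$, with $\Sigma_{1,1}'$ a one-holed torus glued to $\surf$ along $\partial\surf$; extending diffeomorphisms of $\surf$ by the identity on $\Sigma_{1,1}'$ yields a homomorphism $\Gamma_{g,1}\to\Gamma_{g+1}$ which, by functoriality of the Johnson filtration under subsurface inclusion (the inclusion $\pi_1(\surf)\hookrightarrow\pi_1(\Sigma_{g+1})$ preserves lower central series, and the extended mapping class is trivial on the generators of $\pi_1(\Sigma_{1,1}')$), carries $J_{g,1}(n-1)$ into $J_{g+1}(n-1)$. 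Take the triple $(\phi,x,y)$ produced by the proof of Theorem \ref{thm:main}, so that $\phi\in J_{g,1}(n-1)$, $x\in H_n(F_n(\surf);\Q)$, $y\in H^n(F_n(\surf);\Q)$, and $\langle\phi_*(x)-x,y\rangle\neq 0$; denote by $\tilde\phi\in J_{g+1}(n-1)$ the extension of $\phi$.

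Choose a non-separating simple closed curve $\gamma$ in the interior of $\Sigma_{1,1}'$ together with a second simple closed curve $\delta$ in the interior of $\Sigma_{1,1}'$ meeting $\gamma$ transversely in a single point. Define the push-in map
\[
\mu : F_n(\surf)\times\gamma \longrightarrow F_{n+1}(\Sigma_{g+1}),\qquad ((z_1,\dots,z_n),w)\mapsto (z_1,\dots,z_n,w),
\]
which is well-defined because $\gamma\cap\surf=\emptyset$, and $\tilde\phi$-equivariant because $\tilde\phi$ fixes $\gamma$ pointwise. Setting $X:=\mu_*(x\times[\gamma])\in H_{n+1}(F_{n+1}(\Sigma_{g+1});\Q)$, the equivariance yields the key identity
\[
\tilde\phi_*(X)-X=\mu_*\bigl((\phi_*(x)-x)\times[\gamma]\bigr).
\]

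To prove this class is non-zero, I will construct a cohomology class $Y\in H^{n+1}(F_{n+1}(\Sigma_{g+1});\Q)$ with $\mu^*(Y)=y\times\eta$, where $\eta\in H^1(\gamma;\Q)$ is the fundamental cohomology class. Let $\beta\in H^1(\Sigma_{g+1};\Q)$ be Poincar\'e dual to $[\delta]$, so that $\langle[\gamma],\beta\rangle=1$. Let $\pi_{n+1}:F_{n+1}(\Sigma_{g+1})\to\Sigma_{g+1}$ and $\pi_{[n]}:F_{n+1}(\Sigma_{g+1})\to F_n(\Sigma_{g+1})$ denote the projections onto the last coordinate and onto the first $n$ coordinates, and let $\iota:F_n(\surf)\hookrightarrow F_n(\Sigma_{g+1})$ be the open inclusion. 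The central step is to produce a lift $\tilde y\in H^n(F_n(\Sigma_{g+1});\Q)$ with $\iota^*(\tilde y)=y$; given this, set $Y:=\pi_{[n]}^*(\tilde y)\smile\pi_{n+1}^*(\beta)$. A direct restriction computation, using $\pi_{[n]}\circ\mu=\iota\circ p_1$ and $\pi_{n+1}\circ\mu=p_2$ (with $p_1,p_2$ the projections from $F_n(\surf)\times\gamma$ and noting $\beta|_\gamma=\eta$), yields $\mu^*(Y)=y\times\eta$, and Kronecker adjunction gives $\langle\tilde\phi_*(X)-X,Y\rangle=\langle\phi_*(x)-x,y\rangle\cdot\langle[\gamma],\beta\rangle\neq 0$, as required.

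The main obstacle is the construction of the lift $\tilde y$, whose feasibility hinges on the explicit form of $y$ produced in the proof of Theorem \ref{thm:main}. I expect $y$ to be assembled from ``content'' cocycles, each expressible as a cup product of classes of the form $p_i^*(\alpha)$ for $p_i:F_n(\surf)\to\surf$ a projection and $\alpha\in H^1(\surf;\Q)$. The Mayer-Vietoris decomposition $H^1(\Sigma_{g+1};\Q)\cong H^1(\surf;\Q)\oplus H^1(\Sigma_{1,1}';\Q)$ produces, for each such $\alpha$, a canonical lift $\tilde\alpha\in H^1(\Sigma_{g+1};\Q)$ restricting to $\alpha$ and vanishing on the cap, and the same combinatorial formula applied to the projections $F_n(\Sigma_{g+1})\to\Sigma_{g+1}$ defines a class $\tilde y$ satisfying $\iota^*(\tilde y)=y$. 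Any Arnold-type contributions from pairs of points would require a parallel lifting argument, but the content-based description should make this routine.
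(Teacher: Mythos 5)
Your overall skeleton (extend $\phi$ by the identity over the glued one-holed torus, cross $x$ with a curve in the new handle, and pair against a cohomology class built from the extra point) is the same as the paper's, and the equivariance identity together with the Kronecker adjunction computation is fine \emph{granted} your class $Y$. The genuine gap is the central step you defer: the existence of a lift $\tilde y\in H^n(F_n(\Sigma_{g+1}))$ with $\iota^*(\tilde y)=y$. Your justification for it rests on a mis-description of $y$: in the proof of Theorem \ref{thm:main}, $y$ is \emph{not} assembled from cup products of classes $p_i^*(\alpha)$ with $\alpha\in H^1(\surf)$ — it is Poincar\'e dual to the submanifold $N_2$ of configurations lying on the arc $d$ in the prescribed linear order, and the ordering constraint is essential. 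Indeed, if $y$ were a linear combination of products of pullbacks, it would be restricted from $H^n(\msurf^n)$; since $\phi$ lies in the Torelli group, it acts trivially on $H_*(\msurf)^{\otimes n}$, so $\langle\phi_*(x)-x,y\rangle$ would vanish, contradicting Theorem \ref{thm:main}. So the "Arnold-type contributions" you wave off are exactly the substance of $y$, and lifting such classes from configuration spaces of the open surface to those of a closed surface is not routine and in general fails: already for $n=2$ the Arnold class in $H^1(F_2(\msurf))$ has no extension to $H^1(F_2(\Sigma_{h}))$ for a closed surface, because the restriction map in that degree only hits the pullback classes. Whether your specific $y$ lifts is at best unclear, and nothing in your proposal addresses the obstruction.

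The paper circumvents this problem rather than solving it: instead of lifting $y$, it extends the arc $d$ to a \emph{closed} curve $\hat d\subset\Sigma_{g+1}$ through the glued torus, meeting a curve $l$ in the torus once, and takes $\hat y\in H^{n+1}(F_{n+1}(\Sigma_{g+1}))$ dual to the submanifold $\hat N_2$ of $(n+1)$-point configurations lying on $\hat d$ in cyclic order (parametrised by $S^1\times\Delta^n$, hence proper). Restricting to the $\hat\Phi$-invariant product neighbourhood $F_n(\msurf)\times F_{\set{n+1}}(U_l)$, the $(n+1)$-st point on $\hat d\cap U_l$ anchors the cyclic order so that $\hat N_2$ intersects this neighbourhood exactly in $N_2\times(\hat d\cap U_l)$; the pairing then factors as $\langle x,\phi^*y\rangle$ times an intersection number $\pm1$, with no lifting of $y$ ever needed. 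If you want to salvage your route, you would have to either prove that $y$ is in the image of $H^n(F_n(\Sigma_{g+1}))\to H^n(F_n(\msurf))$ (which is a nontrivial claim requiring its own argument) or modify $Y$ along the lines of the paper's $\hat y$, i.e.\ let the $(n+1)$-st point participate in the ordering condition rather than only pairing with $[\gamma]$ through a degree-one class.
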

We remark again that the statement of Corollary \ref{thm:main2} fails for $g+1=1$, since we have that $J_1(n-1)$ is the trivial group for $n\ge2$.

Similarly, for $g+1=2$, we have the equality $J_2(2)=J_2(1)$, as both groups are generated by separating Dehn twists. We are not able to prove non-triviality of the action of the Johnson filtration of $\Gamma_2$ on $H_*(F_n(\Sigma_2))$, and we propose the following conjecture.
\begin{aconjecture}
\label{conj:Gamma2symplectic}
The Torelli group $J_2(1)\subset\Gamma_2$ acts trivially on $H_*(F_n(\Sigma_2))$ for all $n\ge1$; in other words, $H_*(F_n(\Sigma_2))$ are symplectic representations of $\Gamma_2$.
\end{aconjecture}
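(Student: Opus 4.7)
The strategy is to combine Powell's theorem---which in genus two gives $J_2(1) = J_2(2) = \langle T_C \mid C \subset \Sigma_2 \text{ separating}\rangle$---with a localization argument for the action of a single separating Dehn twist. The cases $n = 1, 2$ of the conjecture are immediate: Torelli acts trivially on $H_*(\Sigma_2) = H_*(F_1(\Sigma_2))$ by definition, and on $H_*(F_2(\Sigma_2))$ the group $J_2(2)$ acts trivially by the adaptation of \cite{bianchi2021mapping} discussed in the introduction, combined with $J_2(1) = J_2(2)$. It therefore remains to handle $n \ge 3$.

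For $n \ge 3$ and a fixed separating curve $C$, decompose $\Sigma_2 = \Sigma^L \cup_C \Sigma^R$ into two once-bounded tori $\Sigma^L, \Sigma^R \cong \Sigma_{1,1}$, and take an open collar $A$ of $C$. Stratify $F_n(\Sigma_2)$ by how many configuration points lie in $\Sigma^L \setminus A$, $\Sigma^R \setminus A$, and $A$ respectively; this produces a $T_C$-equivariant spectral sequence whose graded pieces are built from tensor products $H_*(F_k(\Sigma^L)) \otimes H_*(F_{n-k}(\Sigma^R))$ together with low-dimensional corrections from strata where some points lie in $A$. Since $T_C$ is supported in $A$, on each generic piece it acts by the boundary Dehn twist of $\Sigma^L$ or $\Sigma^R$ (extended by the identity), and on the $A$-correction strata by a rotation of an annulus (which is isotopic to the identity). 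By the remark following Theorem \ref{thm:main}, the boundary Dehn twist of $\Sigma_{1,1}$ acts trivially on each $H_*(F_j(\Sigma_{1,1}))$; combining these inputs, $T_C$ acts trivially on the entire associated graded of the spectral sequence.

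The main obstacle is then to pass from the associated graded to the abutment: a priori $T_C$ could act nontrivially through extension classes between graded pieces, and it is precisely this extension phenomenon that witnesses nontriviality in Theorem \ref{thm:main} and in Looijenga's result for $g \ge 3$. Ruling it out in genus two would require exploiting the comparatively rigid representation theory of $\Sp_4(\Z)$: the residual Torelli action on the abutment is classified by $\mathrm{Ext}^1_{\Gamma_2/J_2(1)}$-classes between the symplectic graded pieces, and one would try to show these $\mathrm{Ext}$ groups vanish for the specific representations appearing in $H_*(F_n(\Sigma_2); \Q)$. An alternative route is to exploit the hyperelliptic involution: the branched cover $\Sigma_2 \to S^2$ relates $F_n(\Sigma_2)$ to configuration spaces of the $6$-punctured sphere, whose mapping class group has trivial Torelli subgroup, so one could hope to pull the $\Gamma_2$-action back along a suitably equivariant transfer. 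Either approach will require substantially new input beyond the methods of the present paper, and constitutes the main difficulty of the conjecture.
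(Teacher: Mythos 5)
The statement you are proving is Conjecture~\ref{conj:Gamma2symplectic}: the paper gives no proof of it at all, only the remark that Petersen and Tommasi have announced a proof with rational coefficients \cite{PT}. So there is no argument in the paper to compare yours against, and the only question is whether your proposal closes the conjecture. It does not, and you say so yourself: after reducing (via generation of $J_2(1)$ by separating twists --- a fact due to Mess rather than Powell, though the paper also uses it without proof) to a single twist $T_C$ supported in a collar, and after arguing that $T_C$ acts trivially on the associated graded of a stratification of $F_n(\Sigma_2)$ by the distribution of points between the two once-bounded tori and the collar, you leave open precisely the passage from the associated graded to $H_*(F_n(\Sigma_2))$ itself. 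That step is not a technical remainder; it is the entire content of the conjecture. The paper's Theorem~\ref{thm:main} (and Looijenga's result quoted in the introduction) exist exactly because a mapping class supported in a subsurface, acting trivially on all the ``obvious'' pieces, can still act nontrivially on the homology of a configuration space through extension phenomena --- detecting this required the delicate pairing computation of Sections~3--5. Your two suggested remedies are currently not arguments: the needed vanishing of $\mathrm{Ext}^1$ over $\Sp_4(\Z)$ between the specific graded pieces is asserted, not established (and you would also have to control which symplectic representations actually occur, and work integrally if you want the conjecture as stated rather than its rational form); and the hyperelliptic transfer is not even well defined at the level of spaces, since the branched double cover $\Sigma_2\to S^2$ sends a configuration to a multiset in which points may collide in pairs, so $F_n(\Sigma_2)$ does not map equivariantly to a configuration space of the $6$-punctured sphere without substantial modification.

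Two smaller cautions about the part you do sketch. First, the ``stratification'' of $F_n(\Sigma_2)$ by how many points lie in $\Sigma^L\setminus A$, $\Sigma^R\setminus A$ and $A$ is not a filtration by closed $T_C$-invariant subspaces as written (these loci are neither open nor closed, and points can move between regions); to get an equivariant spectral sequence one needs a genuine filtration, e.g.\ by the number of points in one closed half, or a Mayer--Vietoris--type argument for the cover by $F_n$ of the two open halves enlarged by the collar --- this can be done, but it is where the extension classes you worry about are born, so it must be set up carefully. Second, your input that the boundary Dehn twist of $\Sigma_{1,1}$ acts trivially on every $H_*(F_j(\Sigma_{1,1}))$ is consistent with the remark after Theorem~\ref{thm:main} in the paper, but note that it does not follow from the containment $J_{1,1}(j)\subseteq\ker$ alone (the boundary twist does not lie in $J_{1,1}(j)$ for large $j$); it needs the separate known statement about boundary twists, so cite it as such. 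In summary: the reduction and the genus-one input are reasonable, but the proposal stops exactly at the open problem and therefore does not prove the conjecture.
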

It was communicated to us that Dan Petersen and Orsola Tommasi have a proof of this conjecture for homology with rational coefficients \cite{PT}.

\subsection{Guide to reader}
The paper is written so as to unambiguously construct $x,y$ and $\phi$ in the body of Section 3, however a reader happy to stare at pictures is invited to only consult the figures of that section. Furthermore, if the reader believes that the Kronecker pairing is equivalent to submanifold intersections and that this equivalence is compatible with products of submanifolds and restrictions to smaller ambient manifold, then they are invited to skip section 4 until Definition \ref{defn:contents} and conclude for themselves why the sum of the terms in Proposition \ref{prop:reductiontocontents} is equal to $\left< \phi_*(x)-x,y\right>\in\Z$.
The main computation is in Section 5.

\subsection{Acknowledgments}
The first author would like to thank Dan Petersen, Orsola Tommasi, Jeremy Miller and Jennifer Wilson for the useful conversations. The second author would like to thank his PhD supervisor Oscar Randal-Williams for introducing him to this problem and the work of the first author and all the support. He would also like to thank the Copenhagen Centre for Geometry and Topology for its hospitality during visits where the paper was started and finished.
Both authors would like to thank Louis Hainaut for comments on a first draft of the article.

\tableofcontents
\section{Preliminaries}
In the entire article, unless stated otherwise, homology and cohomology are taken with coefficients in $\mathbb{Z}$; in fact, all arguments work for homology and cohomology over any commutative ring $R$.
\subsection{The Johnson filtration}
Let $*\in\partial\surf$ be a fixed basepoint, and let $\pi=\pi_1(\surf,*)$. The lower central series of $\pi$, denoted
\[
\pi=\gamma_0\pi\supset\gamma_1\pi\supset\gamma_2\pi\supset\dots,
\]
is defined recursively by $\gamma_0\pi=\pi$ and $\gamma_{i+1}\pi=[\pi,\gamma_i\pi]$, for $i\ge0$: here $[\pi,\gamma_i\pi]$ is the subgroup of $\pi$ generated by all commutators of an element of $\pi$ and an element of $\gamma_i\pi$.

All subgroups $\gamma_i\pi\subset\pi$ are characteristic, in particular any automorphism of $\pi$ restricts to an automorphism of $\gamma_i\pi$ and induces an automorphism of the quotient group $\pi/\gamma_i\pi$. In particular, the natural action of $\Gamma_{g,1}$ on $\pi$ induces an action on $\pi/\gamma_i\pi$ by group automorphism
\begin{definition}
\label{defn:Johnsonfiltration}
For $i\ge0$ we denote by $J_{g,1}(i)\subset\Gamma_{g,1}$ the kernel of the action of $\Gamma_{g,1}$ on $\pi/\gamma_i\pi$, i.e. the subgroup of mapping classes acting trivially on $\pi/\gamma_i\pi$.
\end{definition}
For example, for $i=1$, we have that $J_{g,1}(1)$ is the Torelli group, i.e. the subgroup of $\Gamma_{g,1}$ of mapping classes acting trivially on $\pi/[\pi,\pi]\cong H_1(\surf)$. A typical example of a mapping class in $J_{g,1}(1)$ is a \emph{bounding pair} (see Figure \ref{fig:boundingpairs}): if $\alpha,\alpha'$ are disjoint, non-isotopic, non-separating simple closed curves in $\surf$ that together bound a subsurface of $\surf$, then the bounding pair $BP_{\alpha,\alpha'}$ is defined as the product $D_\alpha D_{\alpha}^{-1}\in\Gamma_{g,1}$ of the Dehn twist around $\alpha$ and the inverse of the Dehn twist around $\alpha'$. It is known that bounding pairs belong to $J_{g,1}(1)$, and in fact $J_{g,1}(1)$ is generated by bounding pairs and by Dehn twists around separating simple closed curves in $\surf$ \cite{Birman,Powell} (see also  \cite[Corollary 1.4]{Putman}).

In this article, to obtain elements in $J_{g,1}(n)$ for $n\ge2$ we will use that the Johnson filtration of $\Gamma_{g,1}$ is a \emph{central filtration}: for all $i,j\ge0$, the subgroup $[J_{g,1}(i),J_{g,1}(j)]\subset\Gamma_{g,1}$, generated by commutators of an element in $J_{g,1}(i)$ and an element in $J_{g,1}(j)$, is contained in $J_{g,1}(i+j)$ \cite{Johnson} (see also \cite[Proposition 2.9]{ChurchPutman}).

\subsection{Action of diffeomorphisms and Poincare duality}
\label{subsec:Poincare}
We denote by $\msurf=\surf\setminus\del\surf$ the interior of $\surf$.
The space $F_n(\surf)$ is a non-compact manifold with boundary\footnote{The interior of this manifold has a natural smooth structure, whereas the boundary has a Whitney stratification by smooth submanifolds, but not a canonical smooth structure.} of dimension $2n$; a configuration $(z_1,\dots,z_n)$ lies in $\del F_n(\msurf)$ if and only if at least one $z_i$ lies on $\del\surf$.
Thus $F_n(\msurf)$ is the interior of $F_n(\surf)$, and hence the inclusion $F_n(\msurf)\subset F_n(\surf)$ is a homotopy equivalence. We will henceforth focus on $F_n(\msurf)$, which is an orientable $2n$-dimensional manifold without boundary, equipped with a canonical smooth structure. A once and for all fixed orientation on $\surf$ induces a canonical orientation on $\msurf^m$ and on each open subspace thereof.

The group of diffeomorphisms $\Diffd(\surf)$ acts naturally on $F_n(\msurf)$ by diffeomorphisms, and hence it acts on the homology groups $H_*(F_n(\msurf))$; isotopic diffeomorphisms of $\surf$ induce isotopic diffeomorphisms of $F_n(\msurf))$, and hence the same map in homology. This gives rise to an action of $\Gamma_{g,1}$ on $H_*(F_n(\msurf))$. Similarly, there is an action of $\Gamma_{g,1}$ on the cohomology of $F_n(\msurf)$. The Kronecker pairing is balanced with respect to these actions: for all $x\in H_i(F_n(\msurf))$,  $y\in H^i(F_n(\msurf))$ and $\phi\in \Gamma_{g,1}$ we have the equality
\[
\left<\phi_*(x),y\right>=\left<x,\phi^*(y)\right>;
\]
in particular we have $\left<\phi_*(x)-x,y\right>=\left<x,\phi^*(y)-y\right>$.

We can also consider the action of $\Diffd(\surf)$ on the cartesian power $\surf^{n}$; this action preserves the following subspaces of $\surf^n$:\footnote{The notation is taken from \cite{bianchi2021mapping}, and inspired by the notation of \cite{Moriyama}.}
\begin{itemize}
    \item $A'_n(\surf)=\set{(z_1,\dots,z_n)\in\surf^n\,|\,z_i\in\del\surf\mbox{ for some } i}$;
    \item $\Delta_n(\surf)=\set{(z_1,\dots,z_n)\in\surf^n\,|\, z_i=z_j \mbox{ for some } i\neq j}$.
\end{itemize}
We have therefore an induced action of $\Diffd(\surf)$, descending to the mapping class group $\Gamma_{g,1}$ on the relative homology $H_*(\surf^n,\Delta_n(\surf)\cup A'_n(\surf))$.

Poincare duality for the orientable $2n$-manifold $F_n(\msurf)$, together with the identification of the Borel-Moore homology of $F_n(\msurf)$ with the relative homology of the pair $(\surf^n,\Delta_n(\surf)\cup A'_n(\surf))$, gives an isomorphism of $\Gamma_{g,1}$-representations
\[
H^{*}(F_n(\msurf))\cong H_{2n-*}(\surf^n,\Delta_n(\surf)\cup A'_n(\surf)).
\]
We are going to use also the following simple principles:
\begin{itemize}
    \item Let $N_1\subset F_n(\msurf)$ be an oriented, compact $i$-submanifold, and let $N_2\subset F_n(\msurf)$ be an oriented, proper $(2n-i)$-submanifold. Suppose that $N_1$ and $N_2$ intersect transversely, and let $k\in\Z$ be the number of intersection points, counted with sign.\footnote{The sign of an intersection point is positive if the concatenation of the orientations of $N_1$ and $N_2$ coincides with the orientation of $F_n(\msurf)$, and is negative otherwise.} Then $\left< [N_1],[N_2]\right>$ is equal to $k$, where $[N_1]\in H_i(F_n(\msurf))$ is the fundamental homology class of $N_1$, and $[N_2]\in H^i(F_n(\msurf))$ is the fundamental Borel-Moore homology class of $N_2$, regarded as a cohomology class by virtue of Poincare duality.
    \item Let $\scU\subset F_n(\msurf)$ be an open subspace, and let $N_1$ and $N_2$ be as above, with $N_1\subset \scU$. Then $N_2\cap \scU$ is a proper submanifold of $U$, giving a class $[N_2\cap \scU]\in H^i(U)$, and this class is the restriction of $[N_2]\in H^i(F_n(\msurf))$ along the inclusion of $U$ in $F_n(\msurf)$. In particular there is an equality of Kronecker pairings $\left<[N_1],[N_2]\right>_{F_n(\msurf)}=\left<[N_1],[N_2\cap \scU]\right>_{\scU}$.
    \item Let $N_1,N_2$ and $\scU$ be as in the previous point, and let $\scV\subset\scU$ be a closed subspace, such that $N_2\cap\overline{\scV}=\emptyset$. Let $\mscV\subset\scV$ be the interior of $\scV$. Then $N_2\cap\scU$ represents a Borel-Moore homology class in $H_{2n-i}^{BM}(\scU\setminus\mscV)$, corresponding to a relative cohomology class $[N_2\cap \scU]_{\mathrm{rel.}\mscV}\in H^i(\scU,\mscV)$. The image of $[N_2\cap \scU]_{\mathrm{rel.}\mscV}$ along the natural map
    $H^i(\scU,\mscV)\to H^i(\scU)$ is $[N_2\cap \scU]$. In particular there is an equality of Kronecker pairings
    $\left<[N_1],[N_2\cap \scU]\right>_{\scU}=\left<[N_1]_{\mathrm{rel.}\mscV},[N_2\cap \scU]_{\mathrm{rel.}\mscV}\right>_{\scU{\mathrm{rel.}\mscV}}$, where 
    $[N_1]_{\mathrm{rel.}\mscV}\in H_i(\scU,\mscV)$ is the homology class represented by $N_1$. If moreover the inclusion of pairs $(\scU,\mscV)\subset(\scU,\scV)$ is a homology isomorphism (for instance, because the inclusion of $\mscV$ in $\scV$ is a homotopy equivalence), then we can replace all occurrences of $\mscV$ by $\scV$ in the last formula.
\end{itemize}

\section{Before Fog}
\label{sec:beforefog}
We note that Theorem \ref{thm:main} in the case $n=1$ reduces to the well-known fact that $\Gamma_{g,1}$ acts non-trivially on $H_1(\surf)$; similarly, the case $n=2$ is proved in \cite{Bianchi2020}. From now on we fix $n\ge3$.

We define in this section the classes $x\in H_n(F_n(\msurf))$ and $y\in H^m(F_n(\msurf))$, and the element $\phi\in J_{g,1}(n-1)$; moreover we check $\left<x,y\right>=0$. The ``fog'' refers to the procedure used in Sections \ref{sec:puttingfog} and \ref{sec:afterfog} to prove that $\left<x,\phi^*(y)\right>=\pm1$: we will restrict our attention, in two steps, to certain subspaces $\scU'\subset\scU\subset F_n(\msurf)$, by imposing suitable conditions on the positions of the $n$ points $z_1,\dots,z_n$ of a configuration; if one of the points moves and abandons the region where it is constrained, it fades into the fog and the entire configuration tends to infinity (or, Poincare-dually, the configuration tends to the degenerate configuration)\footnote{Originally we thought of giving this paper the title ``Foggy roller-coasters'', since the pictures of curves climbing the genera of a surface and winding wildly around let us think of a roller-coaster.}.

\subsection{A list of subspaces of \texorpdfstring{$\msurf$}{M}}
\label{subsec:list}
\begin{figure}
  \centering
  \begin{tikzpicture}[xscale=.8,yscale=.8, decoration={markings,mark=at position 0.1 with {\arrow{>}}}]
  

\fill[looseness=1, yellow!70!white] (.3,0) to[out=90, in=90] ++(10.9,0) to[out=-90,in=-90] ++(-10.9,0)    to[out=0,in=180] ++(.3,0)  to[out=-90, in=-90] ++(10.2,0) to[out=90,in=90] ++(-10.2,0)    to[out=180,in=0] ++(-.3,0);
\begin{scope}[shift={(15,0)},xscale=-1]
    \fill[looseness=1, yellow!70!white] (.3,0) to[out=90, in=90] ++(10.9,0) to[out=-90,in=-90] ++(-10.9,0)    to[out=0,in=180] ++(.3,0)  to[out=-90, in=-90] ++(10.2,0) to[out=90,in=90] ++(-10.2,0)    to[out=180,in=0] ++(-.3,0);
\end{scope}

    \fill[looseness=1.1, red!80!white, opacity=.5] (11.3,-.2) to[out=90, in=90] ++(3.05,0) to[out=-90,in=-90] ++(-3.05,0) to[out=0, in=180] ++(.4,0) to[out=-90,in=-90] ++(2.3,0) to[out=90,in=90] ++(-2.3,0) to[out=180,in=0] ++(-.4,0); 

    \draw[looseness=1.1, postaction={decorate}] (11.5,-.2) to[out=90, in=90] ++(2.7,0)  node[left]{\tiny $b$};

    \draw[looseness=1, thick,blue] (.5,0) to[out=90, in=90] node[below]{\tiny$\beta$} ++(10.5,0)  to[out=-90,in=-90] ++(-10.5,0);
    \draw[looseness=.7, thick, blue] (10.4,.2) to[out=50, in=60] (9,1.1) node[left]{\tiny$\beta'$}; 
    
\begin{scope}[shift={(15,0)},xscale=-1]
    \draw[looseness=1, thick, blue] (.5,0) to[out=90, in=90] node[below]{\tiny$\alpha$} ++(10.5,0) to[out=-90,in=-90]  ++(-10.5,0);
    \draw[looseness=.7, thick, blue] (10.4,.2) to[out=50, in=60] (9,1.1) node[right]{\tiny$\alpha'$}; 
\end{scope}

    \fill[looseness=1, opacity=.8, white] (8,0) to[out=40,in=180] ++(3,2.5) to[out=0,in=140] ++(3,-2.5) to[out=180, in=0] (11.8,0) to[out=130,in=50] (10.2,0) to[out=180, in=0] (8,0);
\begin{scope}[shift={(15,0)},xscale=-1]
    \fill[looseness=1, opacity=.8, white] (8,0) to[out=40,in=180] ++(3,2.5) to[out=0,in=140] ++(3,-2.5) to[out=180, in=0] (11.8,0) to[out=130,in=50] (10.2,0) to[out=180, in=0] (8,0);
\end{scope}

    \draw[looseness=.7, thick, blue] (9,1.1) to[out=-120,in=-130] (10.4,.2);
\begin{scope}[shift={(15,0)},xscale=-1]
    \draw[looseness=.7, thick, blue] (9,1.1) to[out=-120,in=-130] (10.4,.2);
\end{scope}

    \fill[red!70!white, opacity=.6, looseness=.9] (1.5,0) to[out=90, in=90] ++(5.1,0) to[out=-90,in=-90] ++(-5.1,0) node[left]{\tiny$A$} to[out=0,in=180] ++(1.6,0) to[out=-90,in=-90] ++(2.4,0) to[out=90,in=90] ++(-2.4,0) to [out=180,in=0] ++(-1.6,0);
    \fill[looseness=1.1, red!80!white, opacity=.5] (9.3,0) to[out=90, in=90] ++(3.1,0) to[out=-90,in=-90] ++(-3.1,0) to[out=0,in=180] ++(.4,0) to[out=-90,in=-90] ++(2.3,0) to[out=90,in=90] ++(-2.3,0) to[out=180,in=0] ++(-.4,0);
    \fill[looseness=2, red!40!white] (11.1,-.9) to[out=90,in=90] ++(1,0) to[out=-90,in=-90] ++(-1,0);

  
    \draw[looseness=1, thick] (1,0) to[out=40,in=180] ++(3,2.5) to[out=0,in=140] ++(3,-2.5);
    \draw[looseness=1, thick] (3.2,0) to[out=50,in=130] ++(1.6,0);
    
    \draw[looseness=1, thick] (8,0) to[out=40,in=180] ++(3,2.5) to[out=0,in=140] ++(3,-2.5);
    \draw[looseness=1, thick] (10.2,0) to[out=50,in=130] ++(1.6,0);

    \draw[looseness=.9, postaction={decorate}] (1.75,0) node[below]{\tiny$a_1$}  to[out=90, in=90]  ++(4.6,0)  to[out=-90,in=-90] ++(-4.6,0);
    \draw[looseness=.85, postaction={decorate}] (2.1,0) node[below]{\tiny$a_2$} node[right]{...} to[out=90, in=90]  ++(4.1,0) to[out=-90,in=-90] ++(-4.1,0);
    \draw[looseness=.9, postaction={decorate}] (2.8,0) node[below]{\tiny$a_{n\!-\!2}$}  to[out=90, in=90]  ++(3.0,0) to[out=-90,in=-90] ++(-3.0,0);

    \draw[looseness=1.1, postaction={decorate}] (9.5,0)  node[left]{\tiny $c$}  to[out=-90, in=-90] ++(2.7,0) to[out=90,in=90] ++(-2.7,0);

    \fill[looseness=1, orange, opacity=.4] (7.2,-3.5) to[out=90, in=-90] (8.5,0) to [out=90, in=90] (13.2,0) to[out=-90, in=90] (13.2,-2.38) to [out=200, in=20] (12.8,-2.55) to[out=90,in=-90] (12.8,0) to[out=90, in=90] (8.9,0) to[out=-90, in=90] (7.6,-3.51) to [out=-190, in=10] (7.2,-3.5);
    \draw[looseness=1.1,] (11.5,-.2) to[out=-90, in=-90] ++(2.7,0);

    \draw[looseness=1, magenta, postaction={decorate}] (13,-2.5) to[out=90, in=-90] (13,0) to[out=90, in=90] (8.7,0) to[out=-90, in=90] node[left]{\tiny$d$} (7.43,-3.5); 

    \node at (11.7,-.7) {\tiny$\bullet$}; \node at (11.6,-.9) {\tiny$P$};
    \fill[opacity=.4, looseness=2] (11.2,-.9) to[out=90,in=90] ++(.8,0) to[out=-90,in=-90] ++(-.8,0);
    
    \draw[looseness=.8,thick] (0,0) to[out=-90,in=-90] ++(15,0) to[out=90,in=90] ++(-15,0);
    
\node at (4,-1.5) {\tiny$R_{A,\alpha}$};
\node at (10.3,-1) {\tiny$R_{c,\beta}$};
\node at (12.6,-2) {\tiny$R_{d,\alpha}$};
\node at (8,-3.2) {\tiny$R_{d,\alpha\beta}$};
\node at (13.3,-0.8) {\tiny$R_{d,b}$};

  \end{tikzpicture}
  \caption{The surface $\surf$ and some relevant subspaces of it. The regions $U_\alpha,U_\beta,U_b,U_c$ and $U_d$ (shaded but not labelled) are tubular neighbourhoods of their corresponding (labelled) curves. Each region labelled $R$ is a rectangle of intersection. The yellow region is $\cF$ and the union of the yellow and red regions is $\cE$.}
    \label{fig:arcsandcurves}
\end{figure}
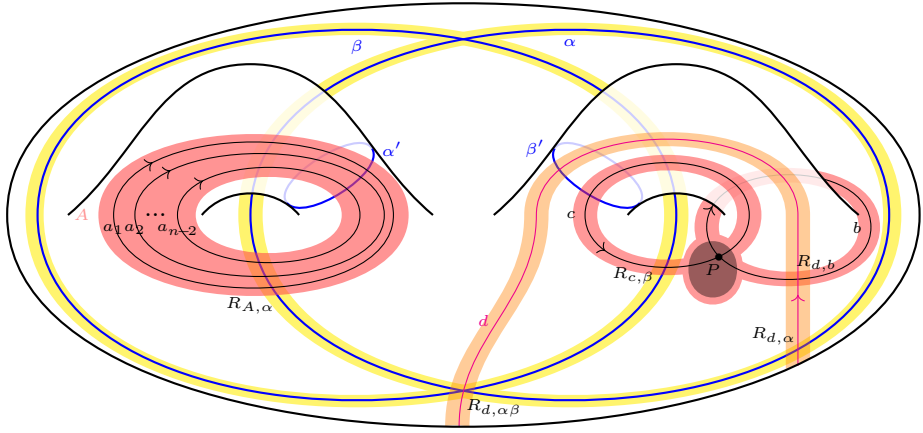
We first introduce the following subspaces of $\msurf$; the reader is invited to refer to this subsection throughout the article. All figures in this and in the next two sections depict surfaces of genus 2, but the arguments are valid for every genus $g\ge 2$.  See Figure \ref{fig:arcsandcurves}. We fix:
\begin{itemize}
    \item an open annulus $A\subset\msurf$, supporting a non-trivial first homology class, and $n-2$ oriented simple closed curves $a_1,\dots,a_{n-2}\subset A$, representing the same generator of $H_1(A)$ and occurring in this order on $A$, from left to right according to the orientation of the curves and of $A\subset\msurf$;
    \item a pair of oriented simple closed curves $b,c$ that are disjoint from $A$ and intersect once transversely at a point $P$;
    \item a simple, properly embedded oriented arc $d$, whose closure $\bar d\subset\surf$ is an embedded arc, transverse to $\del\surf$; $d$ intersects $b$ once, transversely in a point, and is disjoint from the other subspaces mentioned so far;
    \item a pair of disjoint simple closed curves $\alpha,\alpha'$, bounding a subsurface of $\msurf$ of genus 1 that contains $b$ and $c$; $\alpha$ intersects $d$ twice, transversely; $\alpha'$ intersects $A$ in an open segment, and intersects each $a_i$  once, transversely; all other intersections of $\alpha$ and $\alpha'$ with the subspaces mentioned so far are empty;
    \item a pair of disjoint simple closed curves $\beta,\beta'$, bounding a subsurface of $\msurf$ of genus 1 that contains $U$ and $\alpha'$; $\beta'$ is parallel to $b$, it intersects $c$ and $d$ transversely in a point, and is disjoint from all other subspaces mentioned so far; $\beta$ intersects $c$, $d$ and $\alpha$ transversely, respectively in one point, one point and two points, and $\beta$ is disjoint from all other subspaces mentioned so far.
\end{itemize}
We assume that $d$, $\alpha$ and $\beta$ share one intersection point, and that otherwise no three of the curves and arcs mentioned have a common intersection point.
We fix tubular neighbourhoods $U_\alpha,U_\beta,U_b,U_c,U_d$ of $\alpha,\beta,b,c,d$, respectively, in $\msurf$. We assume that each of $U_\alpha\cap A$, $U_\alpha\cap U_b$, $U_\beta\cap U_c$ and $U_\beta\cap U_d$ consists of a single rectangle, that each of $U_\alpha\cap U_\beta$ and $U_d\cap (U_\alpha\cup U_\beta)$ consists of two rectangles,
and that each other intersection between two open sets chosen among $A$ and these open neighbourhoods of curves and arcs is empty.

We denote by $\cE\subset\msurf$ the open subsurface $A\cup U_\alpha\cup U_\beta\cup U_b\cup U_c$, which is the union of the green and pink areas in Figure \ref{fig:arcsandcurves}. We denote by $\cF\subset\cE$ the open subsurface $U_\alpha\cup U_\beta$, which is the green area in Figure \ref{fig:arcsandcurves}, and by $\bar\cF$ its closure: the latter is a surface of genus $0$ with 4 boundary curves.
The \emph{outer} boundary curve of $\cF$, i.e. the one that is parallel to $\del\surf$, is denoted $\delout\cF$.

We further give names to some rectangles occurring as intersections of $\bar\cF$ with other open tubular neighbourhoods:
\begin{itemize}
    \item $R_{A,\alpha}=A\cap \bar U_\alpha$;
    \item $R_{c,\beta}=U_c\cap \bar U_\beta$;
    \item the intersection $U_d\cap\bar \cF$ consists of two rectangles $R_{d,\alpha}$ and $R_{d,\alpha\beta}$, satisfying $R_{d,\alpha\beta}\cap\beta\neq\emptyset=R_{d,\alpha}\cap \beta$.
\end{itemize}
We also denote $R_{d,b}=U_b\cap \bar U_d$. All rectangles introduced above are homeomorphic to $[0,1]\times (0,1)$, and have thus boundary homeomorphic to $\set{0,1}\times (0,1)$. We denote by $\mR_{A,\alpha}$ the interior of $R_{A,\alpha}$ and by $\del\mR_{A,\alpha}$ its boundary; similarly for the other rectangles.
Finally, we fix a small open disc $D_P\subset U_b\cap U_c$, which is the pink-grey area in Figure \ref{fig:arcsandcurves}.

We use the following convention regarding signs of intersection points of oriented curves, with particular reference to Figure \ref{fig:arcsandcurves}: let $\gamma_1,\gamma_2\subset\msurf$ be two oriented curves or arcs that are transverse to each other and represent, respectively, a first homology and a first cohomology class of a subsurface of $\msurf$; then the Kronecker pairing $\left<\gamma_1,\gamma_2\right>$ is equal to the number of points in $\gamma_1\cap\gamma_2$, counted with sign, where $Q\in\msurf$ contributes $+1$ if $\gamma_1$ meets $\gamma_2$ from right (according to the orientation of $\msurf$), and contributes $-1$ otherwise. For instance, $b$ represents a class in $H_1(\msurf)$, and $c$ and $d$ classes in $H^1(\msurf)$. Then $\left<b,c\right>=+1$, as $b$ meets $d$ in a single point, from right; similarly $\left<b,d\right>=+1$.

\subsection{Definition of \texorpdfstring{$x$}{x}}
\label{subsec:x}
\begin{figure}
  \centering
  \begin{tikzpicture}[xscale=.8,yscale=.8, decoration={markings,mark=at position 0.1 with {\arrow{>}}}]
    \draw[looseness=1.1, postaction={decorate}] (11.5,-.2) to[out=90, in=90] ++(2.7,0) ;

    \fill[looseness=1, opacity=.6, white] (8,0) to[out=40,in=180] ++(3,2.5) to[out=0,in=140] ++(3,-2.5) to[out=180, in=0] (11.8,0) to[out=130,in=50] (10.2,0) to[out=180, in=0] (8,0);
\begin{scope}[shift={(15,0)},xscale=-1]
    \fill[looseness=1, opacity=.6, white] (8,0) to[out=40,in=180] ++(3,2.5) to[out=0,in=140] ++(3,-2.5) to[out=180, in=0] (11.8,0) to[out=130,in=50] (10.2,0) to[out=180, in=0] (8,0);
\end{scope}


  
    \draw[looseness=1, thick] (1,0) to[out=40,in=180] ++(3,2.5) to[out=0,in=140] ++(3,-2.5);
    \draw[looseness=1, thick] (3.2,0) to[out=50,in=130] ++(1.6,0);
    
    \draw[looseness=1, thick] (8,0) to[out=40,in=180] ++(3,2.5) to[out=0,in=140] ++(3,-2.5);
    \draw[looseness=1, thick] (10.2,0) to[out=50,in=130] ++(1.6,0);

    \draw[looseness=.9, postaction={decorate}] (1.75,0) node{$\bullet$} node[below]{\tiny$1$} to[out=90, in=90] ++(4.6,0) to[out=-90,in=-90] ++(-4.6,0);
    \draw[looseness=.85, postaction={decorate}] (2.1,0) node{$\bullet$} node[below]{\tiny$2$} node[right]{...} to[out=90, in=90]  ++(4.1,0) to[out=-90,in=-90] ++(-4.1,0);
    \draw[looseness=.9, postaction={decorate}] (2.8,0) node{$\bullet$} node[below]{\tiny$n\!\!-\!\!2$} to[out=90, in=90] ++(3.0,0) to[out=-90,in=-90] ++(-3.0,0);

    \draw[looseness=1.1,] (11.5,-.2) to[out=-90, in=-90] node{$\bullet$} node[below]{\tiny$n\!\!-\!\!1$} ++(2.7,0);

    \fill[opacity=.4, looseness=2] (11.2,-.9) to[out=90,in=90] ++(.8,0) to[out=-90,in=-90] ++(-.8,0);

    \draw[looseness=.8,thick] (0,0) to[out=-90,in=-90] ++(15,0) to[out=90,in=90] ++(-15,0);

    \draw[looseness=1.1, postaction={decorate}] (9.5,0) node{$\bullet$} node[below]{\tiny$n$}  to[out=-90, in=-90] ++(2.7,0) to[out=90,in=90] ++(-2.7,0);

    \draw[->] (11.8,-1.1) arc (0:360:.2); \node[anchor=center] at (11.6,-1.1){$\bullet$};\node[anchor=center] at (11.6,-1.3){$\bullet$};

\begin{scope}[shift={(0,-8)}]
\node at (-1,0) {\large $-$};
    \draw[looseness=1.1, postaction={decorate}] (11.5,-.2) to[out=90, in=90] ++(2.7,0) ;

    \fill[looseness=1, opacity=.6, white] (8,0) to[out=40,in=180] ++(3,2.5) to[out=0,in=140] ++(3,-2.5) to[out=180, in=0] (11.8,0) to[out=130,in=50] (10.2,0) to[out=180, in=0] (8,0);
\begin{scope}[shift={(15,0)},xscale=-1]
    \fill[looseness=1, opacity=.6, white] (8,0) to[out=40,in=180] ++(3,2.5) to[out=0,in=140] ++(3,-2.5) to[out=180, in=0] (11.8,0) to[out=130,in=50] (10.2,0) to[out=180, in=0] (8,0);
\end{scope}


  
    \draw[looseness=1, thick] (1,0) to[out=40,in=180] ++(3,2.5) to[out=0,in=140] ++(3,-2.5);
    \draw[looseness=1, thick] (3.2,0) to[out=50,in=130] ++(1.6,0);
    
    \draw[looseness=1, thick] (8,0) to[out=40,in=180] ++(3,2.5) to[out=0,in=140] ++(3,-2.5);
    \draw[looseness=1, thick] (10.2,0) to[out=50,in=130] ++(1.6,0);

    \draw[looseness=.9, postaction={decorate}] (1.75,0) node{$\bullet$} node[below]{\tiny$1$} to[out=90, in=90] ++(4.6,0) to[out=-90,in=-90] ++(-4.6,0);
    \draw[looseness=.85, postaction={decorate}] (2.1,0) node{$\bullet$} node[below]{\tiny$2$} node[right]{...} to[out=90, in=90]  ++(4.1,0) to[out=-90,in=-90] ++(-4.1,0);
    \draw[looseness=.9, postaction={decorate}] (2.8,0) node{$\bullet$} node[below]{\tiny$n\!\!-\!\!2$} to[out=90, in=90] ++(3.0,0) to[out=-90,in=-90] ++(-3.0,0);

    \draw[looseness=1.1,] (11.5,-.2) to[out=-90, in=-90] node{$\bullet$} node[below]{\tiny$n$} ++(2.7,0);

    \fill[opacity=.4, looseness=2] (11.2,-.9) to[out=90,in=90] ++(.8,0) to[out=-90,in=-90] ++(-.8,0);

    \draw[looseness=.8,thick] (0,0) to[out=-90,in=-90] ++(15,0) to[out=90,in=90] ++(-15,0);

    \draw[looseness=1.1, postaction={decorate}] (9.5,0) node{$\bullet$} node[below]{\tiny$n\!\!-\!\!1$}  to[out=-90, in=-90] ++(2.7,0) to[out=90,in=90] ++(-2.7,0);

    \draw[->] (11.8,-1.1) arc (0:360:.2); \node[anchor=center] at (11.6,-1.1){$\bullet$};\node[anchor=center] at (11.6,-1.3){$\bullet$};
    
    \end{scope}

  \end{tikzpicture}
  \caption{The homology class $x\in  H_n(F_n(\msurf))$ is represented by a closed $n$-dimensional submanifold $N_1$ obtained by gluing the two drawn $n$-submanifolds along their equal boundary in the grey region. $N_1$ is the product of an $(n-2)$-torus $\mathbb{T}^{n-2}$ on the left genus and a $\Sigma_2=\mathbb{T}^{2}\#\mathbb{T}^{2}$ on the right genus.}
  \label{fig:Hclassx}
\end{figure}
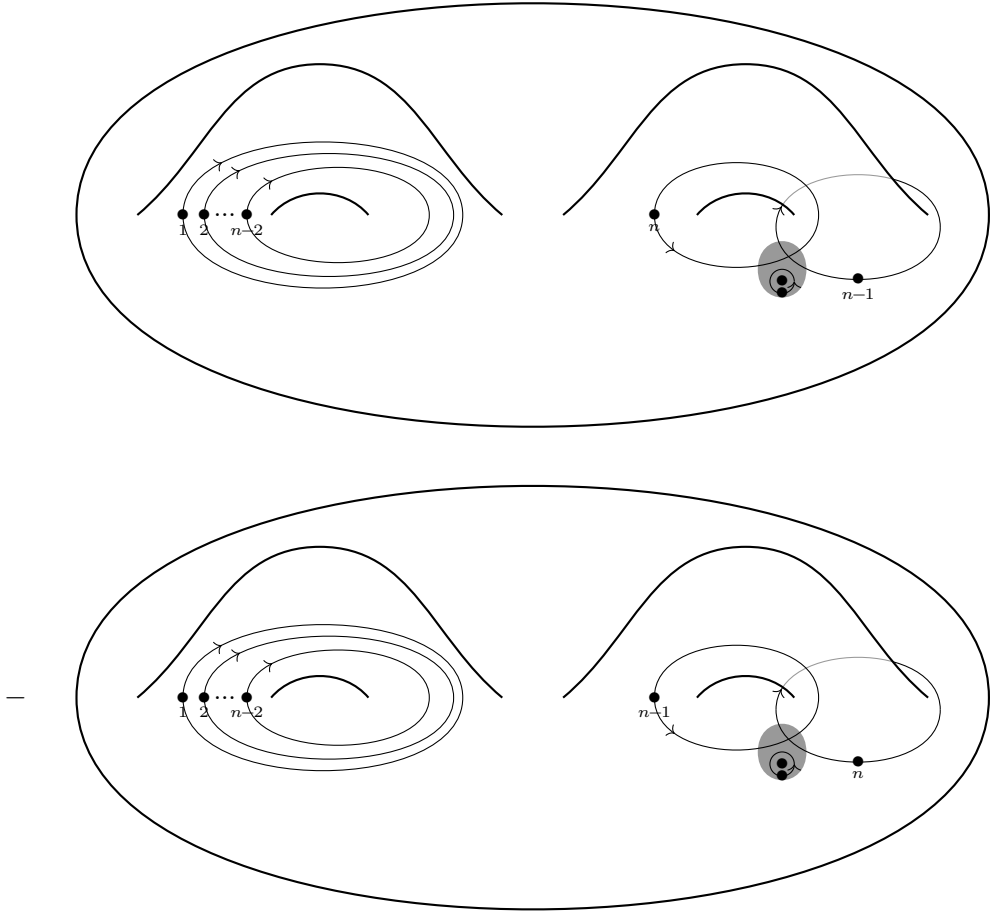

We refer to Figure \ref{fig:Hclassx}.
The class $x\in H_n(F_n(\msurf))$ is defined as the image of the fundamental homology class of a certain oriented manifold $N_1$ along a map $\iota_{N_1}\colon N_1\to F_n(\msurf)$.

We start by defining $N_1$: the first step is to define a certain family of configurations of two points, parametrised by a surface of genus 2.
\begin{definition}
\label{defn:cS2}
Let $S^1\subset\C$ denote the unit circle, and let $I=\set{z\in S^1\,|\,\Re(z)\ge 1/2}$. Let $\cS_{1,1}$ denote the space $(S^1\times S^1)\setminus (\mI\times\mI)$, and note that $\cS_{1,1}$ is a compact surface of genus $1$ with one boundary curve $\del\cS_{1,1}$; $\cS_{1,1}$ inherits an orientation from $S^1\times S^1$. Fix a homeomorphism $\rho\colon S^1\overset{\cong}{\to} \del\cS_{1,1}$, and let $\cS_2$ be the space obtained as quotient of $\cS_{1.1}\sqcup S^1\times[0,1]\sqcup\cS_{1,1}$ by identifying along $\rho$ the boundary of the first copy of $\cS_{1,1}$ with $S^1\times\set{0}$, and the boundary of the second copy of $\cS_{1,1}$ with $S^1\times\set{1}$.
\end{definition}
Note that $\cS_2$ is a topological closed, orientable surface of genus 2; we fix an orientation on $\cS_2$: note that one of the two natural embeddings $\cS_{1,1}\hookrightarrow\cS_2$ is orientation-preserving, whereas the other is orientation-reversing. Our next aim is to define a map $\iota\colon\cS_2\to\msurf^2$.
Let $I_b\subset b\cap D_P$ and $I_c\subset c\cap D_P$ be two small, closed interval neighbourhoods of $P$ in $b$ and $c$ respectively, and fix homeomorphisms of pairs $\iota_b\colon (S^1,I)\cong (b,I_b)$ and $\iota_c\colon (S^1,I)\cong(c,I_c)$. Denote by $\bar\iota_c\colon S^1\to c$ the composition of complex conjugation $S^1\overset{\cong}{\to} S^1$ and $\iota_c\colon  S^1\overset{\cong}{\to} c$.

Consider the maps
\[
\iota_b\times\iota_c,\bar\iota_c\times\iota_b\colon S^1\times S^1\to \msurf^2,
\]
and note that the preimage of the diagonal of $\msurf^2$ consists, for each of the maps, only of the point $(1,1)\in S^1\times S^1$. Thus we can define a first map
$\mathring{\iota}_{\cS_2}\colon \cS_{1,1}\sqcup\cS_{1,1}\to F_2(\msurf)$ by taking the restriction
of $\iota_b\times\iota_c$ and the restriction of $\bar\iota_c\times\iota_b$, respectively, on the two copies of $\cS_{1,1}$.

We then note that both compositions $(\iota_b\times\iota_c)\circ\rho$ and $(\bar\iota_c\times\iota_b)\circ\rho$ are maps $S^1\to\msurf^2$ with image inside $F_2(D_P)$, and that the two maps
\[
(\iota_b\times\iota_c)\circ\rho \ ,\  (\bar\iota_c\times\iota_b)\circ\rho\colon S^1\to F_2(D_P),
\]
which are in fact homotopy equivalences, are indeed homotopic. We can thus fix a homotopy $S^1\times[0,1]\to F_2(D_P)\subset F_2(\msurf)$ interpolating between the two maps, and use this homotopy to complete $\mathring{\iota}_{\cS_2}$ to a map $\iota_{\cS_2}\colon \cS_2\to F_2(\msurf)$. With a little care one can achieve that $\iota_{\cS_2}$ is an embedding, and thus consider $\cS_2$ as a subspace of $F_2(\msurf)$; we leave the details to the reader, and continue the discussion without assuming that $\iota_{\cS_2}$ is an embedding.

Note that by construction $\iota$ takes image in $F_2(b\cup c\cup D_P)$.
We define $N_1=(S^1)^{n-2}\times \cS_2$, and orient it as a product of oriented manifolds. We fix parametrisations $\iota_{a_i}\colon S^1\overset{\cong}{\to} a_i$ that are compatible with the orientations of the curve $a_i$, and define
\[
\iota_{N_1}:=\iota_{a_1}\times\dots\times\iota_{a_{n-2}}\times\iota_{\cS_2}\colon N_1\to \msurf^n.
\]
Again, we note that the image of $\iota_{N_1}$ is contained in $F_n(\msurf)$.
Loosely speaking, $\iota(N_1)$ is the subspace of configuration of $n$ ordered particles $(z_1,\dots,z_n)$ in $\msurf$, such that for $1\le i\le n-2$ the particle $z_i$ lies on $a_i$, and the particles $z_{n-1}$ and $z_{n-2}$ assemble into a configuration in the image of $\iota_{\cS_2}$. We let $x\in H_n(F_n(\msurf))$ be the image of the fundamental class of $N_1$ along $\iota_{N_1}$.

We remark that $\iota_{N_1}$ restricts to an embedding on
$(S^1)^{n-1}\times(\cS_{1,1}\sqcup\cS_{1,1})$, and that is all we are going to need later.

\subsection{Definition of \texorpdfstring{$y$}{y}}
\label{subsec:y}
\begin{figure}
  \centering
  \begin{tikzpicture}[xscale=.8,yscale=.8,decoration={markings,mark=at position 0.38 with {\arrow{>}}}]
  
  
    \draw[looseness=1, thick] (1,0) to[out=40,in=180] ++(3,2.5) to[out=0,in=140] ++(3,-2.5);
    \draw[looseness=1, thick] (3.2,0) to[out=50,in=130] ++(1.6,0);
    
    \draw[looseness=1, thick] (8,0) to[out=40,in=180] ++(3,2.5) to[out=0,in=140] ++(3,-2.5);
    \draw[looseness=1, thick] (10.2,0) to[out=50,in=130] ++(1.6,0);

    \draw[looseness=1, magenta, postaction={decorate}] (13,-2.5) to[out=90, in=-90] (13,0) to[out=90, in=90] (8.7,0) to[out=-90, in=90] node[left]{\tiny$d$} (7.4,-3.5); 
    
    \draw[looseness=.8,thick] (0,0) to[out=-90,in=-90] ++(15,0) to[out=90,in=90] ++(-15,0);

    \node at (13,-2.2){$\bullet$}; \node at (12.7,-2.3){\tiny$1$};
    \node at (13,-1.8){$\bullet$}; \node at (12.7,-1.9){\tiny$2$};
    \node at (13,-1.4){$\bullet$}; \node at (12.7,-1.5){\tiny$3$};
    \node at (12.7,-0.6){$\vdots$};
    \node at (13,0)  {$\bullet$} ; \node at (12.7,-.1){\tiny$n$};

  \end{tikzpicture}
  \caption{The cohomology class $y\in H^n(F_n(\msurf))$ is represented by the proper embedded $n$-dimensional submanifold of $F_n(\msurf)$ where the particles $1,...,n$ move along $d$ in increasing order.}
  \label{fig:Hclassy}
\end{figure}
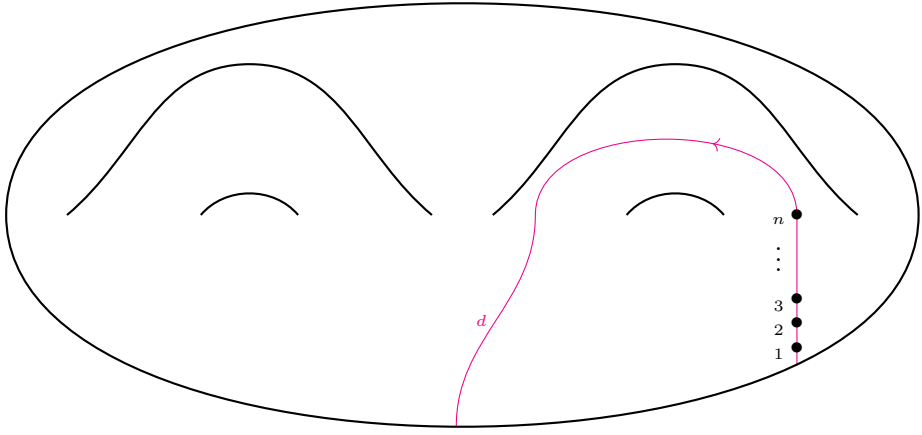
We refer to Figure \ref{fig:Hclassy}. Fix a parametrisation $\iota_d\colon[0,1]\cong \bar d$ of the closure in $\surf$ of the arc $d$, compatible with the orientation on $d$. We denote by $\Delta^n\subset[0,1]^n$ the standard simplex, consisting of all $(t_1,\dots,t_n)\in[0,1]$ with $t_1\le\dots\le t_n$. The restriction of $\iota_d^n$ gives an embedding $\iota_{\Delta^n}\colon \Delta^n\to\msurf^n$, which is a proper map since $\Delta^n$ is compact. Moreover the preimage of $F_n(\msurf)$ coincides with 
the interior $\mathring{\Delta}^n$ of $\Delta^n$. We define $N_2=\iota_{\Delta^n}(\mathring{\Delta}^n)$: it is an oriented, proper submanifold of $F_n(\msurf)$ of dimension $n$.

We define $y\in H^n(F_n(\msurf))$ as the cohomology class represented by $N_2\subset F_n(\msurf)$: it contains all configurations of $n$ distinct particles $(z_1,\dots,z_n)$ such that all $z_i$ lie on $d$, and occur on $d$ in the same order as their indices prescribe.

We notice that the image of $\iota_{N_1}$ is disjoint from $N_2$ inside $F_n(\msurf)$; this implies the following, which we state as a lemma.
\begin{lemma}
\label{lem:xdotyvanishes}
The Kronecker pairing $\left<x,y\right>$ vanishes.
\end{lemma}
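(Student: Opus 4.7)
The plan is to apply the intersection-theoretic principles from Subsection~\ref{subsec:Poincare}: since $y$ is Poincar\'e dual to the proper submanifold $N_2$, the pairing $\langle x,y\rangle$ can be computed by restricting the computation to any open subspace containing the image of $\iota_{N_1}$, or equivalently by a signed intersection count of (the image of) $N_1$ with $N_2$.

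First I would verify that $\iota_{N_1}(N_1)\cap N_2=\emptyset$ inside $F_n(\msurf)$. A configuration $(z_1,\dots,z_n)\in N_2$ satisfies $z_i\in d$ for all $i$, while a configuration in $\iota_{N_1}(N_1)$ satisfies $z_i\in a_i\subset A$ for $1\le i\le n-2$. By the construction in Subsection~\ref{subsec:list}, the arc $d$ is chosen disjoint from the annulus $A$, and hence from each $a_i$; in particular no configuration can lie in both subspaces simultaneously.

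Given disjointness, there is nothing left to compute. Setting $\scU=F_n(\msurf)\setminus N_2$, the map $\iota_{N_1}$ factors through $\scU$; the second principle of Subsection~\ref{subsec:Poincare}, applied to $N_2\cap\scU=\emptyset$, identifies the restriction of $y$ to $\scU$ with the Borel--Moore fundamental class of the empty set, which vanishes. Thus $\iota_{N_1}^*y=0\in H^n(N_1)$ and
\[
\langle x,y\rangle=\langle (\iota_{N_1})_*[N_1],y\rangle=\langle[N_1],\iota_{N_1}^*y\rangle=0.
\]
There is no real obstacle here: the entire content of the lemma is absorbed into the careful choice in Subsection~\ref{subsec:list} that $d$ avoids $A$, and the lemma serves mostly as a sanity check that one must pass to $\phi^*(y)$ rather than $y$ itself when trying to detect a nontrivial pairing.
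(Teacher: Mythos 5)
Your proposal is correct and matches the paper's argument: the lemma is justified there by the single observation that the image of $\iota_{N_1}$ is disjoint from $N_2$ (the first $n-2$ points lie on the $a_i\subset A$, and $d$ was chosen disjoint from $A$), which is exactly your disjointness step. Your additional spelling-out of the restriction-to-$\scU$ and naturality-of-the-pairing details is a harmless elaboration of the same approach.
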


\subsection{Definition of \texorpdfstring{$\phi$}{phi}}
\label{subsec:Phi}
Consider the bounding pair of Dehn twists $\phi_{\alpha}:=D_{\alpha}D_{\alpha'}^{-1}\in J_{g,1}(1)$ and the
bounding pair of Dehn twists $\phi_{\beta}:=D_{\beta}D_{\beta'}^{-1}\in J_{g,1}(1)$. See Figure \ref{fig:boundingpairs}.
    
\begin{figure}
  \centering
  \begin{tikzpicture}[xscale=.8,yscale=.8]

\fill[looseness=1, yellow!70!white] (.3,0) to[out=90, in=90] ++(10.9,0) to[out=-90,in=-90] ++(-10.9,0)    to[out=0,in=180] ++(.3,0)  to[out=-90, in=-90] ++(10.2,0) to[out=90,in=90] ++(-10.2,0)    to[out=180,in=0] ++(-.3,0);
\begin{scope}[shift={(15,0)},xscale=-1]
    \fill[looseness=1, yellow!70!white] (.3,0) to[out=90, in=90] ++(10.9,0) to[out=-90,in=-90] ++(-10.9,0)    to[out=0,in=180] ++(.3,0)  to[out=-90, in=-90] ++(10.2,0) to[out=90,in=90] ++(-10.2,0)    to[out=180,in=0] ++(-.3,0);
\end{scope}

    \draw[looseness=1, thick,blue] (.5,0) to[out=90, in=90] node[below]{\tiny$\beta$} ++(10.5,0)  to[out=-90,in=-90] ++(-10.5,0);
    \draw[looseness=.7, thick, blue] (10.4,.2) to[out=50, in=60] (9,1.1) node[left]{\tiny$\beta'$}; 
    
\begin{scope}[shift={(15,0)},xscale=-1]
    \draw[looseness=1, thick, blue] (.5,0) to[out=90, in=90] node[below]{\tiny$\alpha$} ++(10.5,0) to[out=-90,in=-90]  ++(-10.5,0);
    \draw[looseness=.7, thick, blue] (10.4,.2) to[out=50, in=60] (9,1.1) node[right]{\tiny$\alpha'$}; 
\end{scope}

    \fill[looseness=1, opacity=.6, white] (8,0) to[out=40,in=180] ++(3,2.5) to[out=0,in=140] ++(3,-2.5) to[out=180, in=0] (11.8,0) to[out=130,in=50] (10.2,0) to[out=180, in=0] (8,0);
\begin{scope}[shift={(15,0)},xscale=-1]
    \fill[looseness=1, opacity=.6, white] (8,0) to[out=40,in=180] ++(3,2.5) to[out=0,in=140] ++(3,-2.5) to[out=180, in=0] (11.8,0) to[out=130,in=50] (10.2,0) to[out=180, in=0] (8,0);
\end{scope}

  
    \draw[looseness=1, thick] (1,0) to[out=40,in=180] ++(3,2.5) to[out=0,in=140] ++(3,-2.5);
    \draw[looseness=1, thick] (3.2,0) to[out=50,in=130] ++(1.6,0);
    
    \draw[looseness=1, thick] (8,0) to[out=40,in=180] ++(3,2.5) to[out=0,in=140] ++(3,-2.5);
    \draw[looseness=1, thick] (10.2,0) to[out=50,in=130] ++(1.6,0);
    
    \draw[looseness=.7, thick, blue] (9,1.1) to[out=-120,in=-130] (10.4,.2);
    
\begin{scope}[shift={(15,0)},xscale=-1]
    \draw[looseness=.7, thick, blue] (9,1.1) to[out=-120,in=-130] (10.4,.2);
\end{scope}

    \draw[looseness=.8,thick] (0,0) to[out=-90,in=-90] ++(15,0) to[out=90,in=90] ++(-15,0);
    ;

  \end{tikzpicture}
  \caption{The mapping class $\phi$, obtained as iterated commutator of the bounding pairs $\phi_\alpha=D_{\alpha}D_{\alpha'}^{-1}$ and $\phi_\beta=D_{\beta}D_{\beta'}^{-1}$, can be represented by a diffeomorphism $\Phi$ supported on the subsurface $\cF$ (the yellow region).}
  \label{fig:boundingpairs}
  
\end{figure}
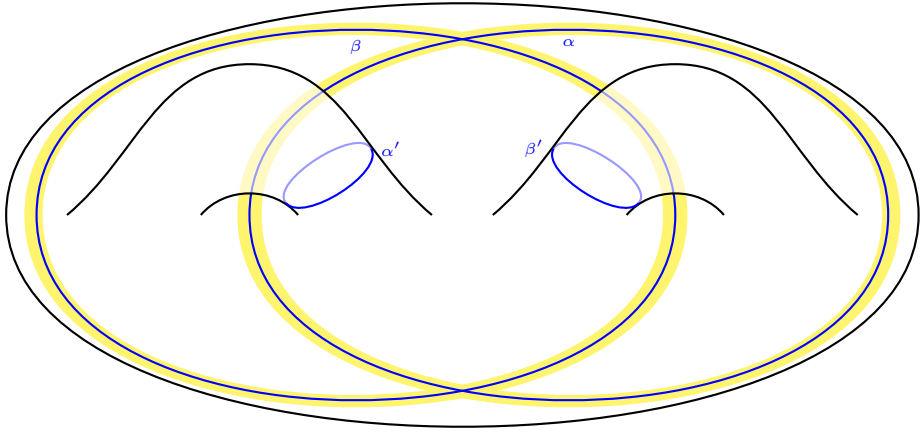

Take the commutator 
\[
\phi=[\phi_{\alpha},[\phi_{\alpha},[\dots[\phi_{\alpha},\phi_{\beta}]\dots]^{-1}
\]
where $\phi_{\alpha}$ appears $n-2$ times. Observe that $\phi\in J_{g,1}(n-1)$, by centrality of the Johnson filtration, since it is an iterated commutator of $n-1$ elements in $J_{g,1}(1)$.
Note also that both $D_{\alpha'}$ and $D_{\beta'}$ commute with $D_{\alpha}$, with $D_{\beta}$ and with each other; hence the same iterated commutator can be written as
\[
\phi=[D_{\alpha},[D_{\alpha},[\dots[D_{\alpha},D_{\beta}]\dots]^{-1}.
\]
\begin{notation}
\label{nota:Phi}
We fix a diffeomorphism $\Phi\in\Diffd(\surf)$ representing $\phi$ and supported on $\cF$, i.e. $\Phi$ fixes pointwise the complement of $\cF$. Up to an isotopy supported on $\cF$, we can assume the following:
\begin{itemize}
    \item $\Phi^{-1}(d)$ is transverse to all curves $a_i$ and to $\del A$; moreover $\Phi^{-1}(d)$ intersects $\bar A$ in closed segments that are properly embedded in $\bar A$, and each segment intersects once each of the curves $a_i$ and each component of $\del A$.
    \item $\Phi^{-1}(d)$ is transverse to $c$ and to $\del U_c$; moreover $\Phi^{-1}(d)$ intersects $\bar U_c$ in closed segments that are properly embedded in $\bar U_c$, and each segment intersects once $c$ and each component of $\del U_c$;
\end{itemize}
\end{notation}
To achieve the first requirement, it can be useful to first ensure transversality of $\Phi^{-1}(d)$ and $a_1$ by changing $\Phi$ up to isotopy, and then replace $A$ by a smaller tubular neighbourhood of $a_1$, thereby replacing also the curves $a_2,\dots,a_{n-2}$ by parallel curves. In a similar way one can achieve the second requirement.

Thanks to Lemma \ref{lem:xdotyvanishes}, we only need to check that the Kronecker pairing $\left<x,\phi^*(y)\right>$ does not vanish, in order to prove Theorem \ref{thm:main}.


\section{Putting Fog}
\label{sec:puttingfog}
\subsection{Putting fog away from \texorpdfstring{$\cF$}{cF}}
We define certain $\phi$-invariant subspaces $\scV\subset \scU\subset F_n(\msurf)$, with $\scU$ open and $\scV$ closed in $\scU$.
We then replace $x$ and $y$, respectively, by a homology class $x'\in H_i(\scU,\scV)$ and a cohomology class $y'\in H^i(\scU,\scV)$. The computation of the Kronecker pairing $\left<x,\phi^*(y)\right>$ will be reduced to the computation of the pairing $\left<x',\phi^*(y')\right>_{\scU\mathrm{rel.}\scV}$.
\begin{definition}
\label{defn:scUscV}
Recall the notation from Subsection \ref{subsec:list}. We define $\scU\subset F_n(\msurf)$ as the open subspace of configurations $(z_1,\dots,z_n)$ satisfying the following:
\begin{itemize}
    \item[($\scU$1)] all points $z_i$ lie in $\cE$;
    \item[($\scU$2)] the points $z_1,\dots,z_{n-2}$ lie in the union $\cF\cup A$;
    \item[($\scU$3)] at least one among $z_{n-1}$ and $z_n$ lies in the union $\cF\cup U_c$;
    \item[($\scU$4)] at least one among $z_{n-1}$ and $z_n$ lies in $U_b$.
\end{itemize}
We define $\scV\subset\scU$ as the relatively closed subspace of configurations $(z_1,\dots,z_n)$ satisfying in addition the following:
\begin{itemize}
    \item[($\scV$1)] at least one point $z_i$ lies in the subspace
\[
 \cE\setminus(\cF\cup U_d)=(A\setminus \mR_{A,\alpha})\cup(U_c\setminus \mR_{c,\beta})\cup(U_b\setminus \mR_{d,b}).
\]
\end{itemize}
\end{definition}
We observe the following facts.
\begin{itemize}
    \item Recall from Subsection \ref{subsec:Phi} that we have fixed a diffeomorphism $\Phi\colon\surf\to\surf$ fixing pointwise the complement of $\cF$; the action of $\Phi$ on $F_n(\msurf)$ preserves both subspaces $\scU$ and $\scV$.
    \item Recall from Subsection \ref{subsec:x} the map $\iota_{N_1}\colon N_1\to F_n(\msurf)$; then $\iota_{N_1}$ takes values inside $\scU$. Define $x'\in H_n(\scU)$ as the image along $\iota_{N_1}$ of the fundamental homology class of $N_1$; then $x'\mapsto x$ along map $H_n(\scU)\to H_n(F_n(\msurf))$ induced by the inclusion, and by the previous remark we also have $(\Phi|_{\scU})_*(x')\mapsto \phi_*(x)$ along the same map.
    \item Recall from Subsection \ref{subsec:y} the submanifold $N_2\subset F_n(\msurf)$; then $N_2$ is disjoint from $\scV$. We can thus define $y'\in H^n(\scU,\mscV)$ as the cohomology class that is Poincare dual to $N_2$; again, $y'\mapsto y|_{\scU}$ along the natural map $H^n(\scU,\mscV)\to H^n(\scU)$, and by the previous remark we also have $(\Phi|_{\scU})^*(y')\mapsto (\phi^*(y))|_{\scU}$ along the same map.
    \item The space $\scV$ is a topological submanifold with boundary of $\scU$, in particular the inclusion $\mscV\subset\scV$ is a homotopy equivalence and we can consider $y'$ as a cohomology class in $H^n(\scU,\scV)$.
\end{itemize}
From now on, by abuse of notation, we will denote the image of $x'$ along the map $H_n(\scU)\to H_n(\scU,\scV)$ also by $x'\in H_n(\scU,\scV)$.
By the principles listed in Subsection \ref{subsec:Poincare}, we have the equality $\left<x,\phi^*(y)\right>=\left<x',(\Phi|_{\scU})^*(y')\right>_{\scU\mathrm{rel.}\scV}$.

\begin{notation}
We denote by $d_{\alpha}$, $d_{\alpha\beta}$ and $d_b$ the closed segments $d\cap R_{d,\alpha}$, $d\cap R_{d,\alpha\beta}$ and $d\cap R_{d,b}$, respectively. We denote by $\md_\alpha$ the interior of $d_\alpha$, and similarly for the other segments.
\end{notation}

Consider now a configuration $(z_1,\dots,z_n)$ in the intersection $N_2\cap \scU$. By definition of $N_2$, both particles $z_{n-1}$ and $z_n$ must lie on $d$; condition $(\scU4)$ imposes then that at least one among $z_{n-1}$ and $z_n$ lies on $\md_b$, whereas ($\scU$3) imposes that at least one among $z_{n-1}$ and $z_n$ lies on $\md_\alpha\cup\md_{\alpha\beta}$.
Recall moreover that for a configuration $(z_1,\dots,z_n)$ the particles $z_{n-1}$ and $z_n$ have to appear in their natural order on $d$: we conclude that $N_2\cap\scU$ splits as the disjoint union of two closed submanifolds of $\scU$, described as follows:
\begin{itemize}
    \item $Q_n$ contains configurations $(z_1,\dots,z_n)\in N_2\cap \scU$ for which $z_1,\dots,z_{n-1}\in \md_\alpha$ and $z_n\in \md_b$;
    \item $Q_{n-1}$ contains configurations $(z_1,\dots,z_n)\in N_2\cap \scU$ for which $z_1,\dots,z_{n-2}\in \md_\alpha$, $z_{n-1}\in \md_b$ and $z_n\in \md_{\alpha,\beta}$.
\end{itemize}
The notation for $Q_{n-1}$ and $Q_n$ depends on which point lies on $\md_b$.
\begin{definition}
\label{defn:Wi}
For a natural number $i$ we denote by $W_i\subset F_i(\md_\alpha)$ the subspace of configurations $(z_1,\dots,z_i)$ such that, according to the orientation of $\md_{\alpha}$ inherited from $d$, we have $z_1<\dots <z_i$.
\end{definition}
Note that $Q_n$ is naturally homeomorphic to the product $W_{n-1}\times \md_b$. Similarly, $Q_{n-1}$ is naturally homeomorphic to the product $W_{n-2}\times \md_b\times\md_{\alpha\beta}$.

The above discussion also shows that $y'$ can be written as a sum of two cohomology classes, and each of these summands is represented by a submanifold of $\scU$ which is disjoint from $\scV$ and has, loosely speaking, the shape of a product.


\subsection{Excision of \texorpdfstring{$\mathring{\scV}$}{scV}}
Our next goal will be to use excision in order to replace the pair $(\scU,\scV)$ by another pair $(\scU',\scV')$ of subspaces of $F_n(\msurf)$, such that the latter pair also splits as a disjoint union of products of pairs of spaces, in such a way that the two cohomology classes discussed above take the form of cohomology cross products. 

\begin{definition}
Recall Definition \ref{defn:scUscV}. We let $\scU'=\scU\setminus\mathring{\scV}$, and let $\scV'=\del\scV=\scV\setminus\mathring{\scV}$.
\end{definition}
A configuration $(z_1,\dots, z_n)$ lies in $\mathring{\scV}\subset\scU$ if it satisfies conditions ($\scU$1-4) and the following condition, which is the ``open version'' of ($\scV$1):
\begin{itemize}
    \item[($\mathring{\scV}$1)] at least one point $z_i$ lies in the subspace
\[
 \cE\setminus(\bar\cF\cup \bar U_d)=(A\setminus R_{A,\alpha})\cup(U_c\setminus R_{c,\beta})\cup(U_b\setminus R_{d,b}).
\]
\end{itemize}

\begin{notation}
\label{nota:FR}
We denote $\cF^R\subset\msurf$ the union of $\cF\cup R_{A,\alpha}\cup R_{c,\beta}$, and by $\del \cF^R=\del R_{A,\alpha}\cup \del R_{c,\beta}$ its boundary.
\end{notation}
Note that $\cF^R$ is obtained from $\cF$ by adding four boundary segments.

\begin{lemma}
\label{lem:scUpcharacterisation}
The space $\scU'\subset F_n(\msurf)$ contains all configurations $(z_1,\dots,z_n)$ satisfying the following conditions:
\begin{itemize}
    \item[($\scU'$1)] all points $z_i$ lie in $\cF^R \sqcup R_{d,b}$, where we highlight the two connected components of this subspace of $\msurf$;
    \item[($\scU'$2)] the points $z_1,\dots,z_{n-2}$ lie in 
    $\cF\cup R_{A,\alpha}\subset \cF^R$
    \item[($\scU'$3)] exactly one among $z_{n-1}$ and $z_n$ lies in $\cF\cup R_{c,\beta}\subset\cF^R$;
    \item[($\scU'$4)] exactly one among $z_{n-1}$ and $z_n$ lies in $R_{d,b}$.
\end{itemize}
The subspace $\scV'\subset \scU'$ can be characterised by the following additional condition:
\begin{itemize}
\item[($\scV'$1)] at least one point $z_1,\dots,z_{n-2}$ lies on $\del R_{A,\alpha}$, or at least one point $z_{n-1},z_n$ lies on $\del R_{c,\beta}\cup\del R_{d,b}$.
\end{itemize}
\end{lemma}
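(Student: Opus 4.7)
The plan is to unwind the definitions of $\scU'$ and $\scV'$ and translate the resulting set-theoretic conditions into the conditions stated. The key geometric input, which I would establish first, is that
\[
\cE \setminus \bigl[(A\setminus R_{A,\alpha}) \cup (U_c\setminus R_{c,\beta}) \cup (U_b\setminus R_{d,b})\bigr] = \cF^R \sqcup R_{d,b},
\]
with the two pieces disjoint. This relies on the standing hypotheses from Subsection \ref{subsec:list} that only the listed pairwise intersections among $A, U_\alpha, U_\beta, U_b, U_c, U_d$ are non-empty, together with the fact that the rectangle $U_\alpha \cap U_b$ sits near the intersection point $\alpha \cap b$ and hence is geometrically separated from $R_{d,b}$ (which sits near $d \cap b$); for tubular neighborhoods chosen sufficiently small, $U_\alpha \cap U_b \cap \bar U_d = \emptyset$ and the disjointness of $\cF^R$ from $R_{d,b}$ follows.

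Next I would note that $\mathring\scV$ is the subspace of $\scU$ defined by conditions $(\scU 1)$--$(\scU 4)$ together with the open condition $(\mathring\scV 1)$: this is a routine passage from the closed stratum $\scV$ to its interior, using that $\scV$ is a topological submanifold-with-boundary of $\scU$ cut out by the single closed-inequality-type condition $(\scV 1)$. Thus a configuration lies in $\scU' = \scU \setminus \mathring\scV$ iff it satisfies $(\scU 1)$--$(\scU 4)$ and no $z_i$ lies in the ``bad'' set from $(\mathring\scV 1)$. Intersecting $(\scU 1)$ with this no-bad-point condition puts every $z_i$ in $\cF^R \sqcup R_{d,b}$, yielding $(\scU' 1)$. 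Combining further with $(\scU 2)$, and using that $R_{d,b}$ is disjoint from $\cF \cup A$, confines the first $n-2$ particles to $\cF \cup R_{A,\alpha}$, yielding $(\scU' 2)$. For $(\scU' 3)$ and $(\scU' 4)$, the ``at least one'' assertions follow from $(\scU 3)$ and $(\scU 4)$ combined with the no-bad-point condition (forcing a point in $U_b$ into $R_{d,b}$, and a point in $\cF \cup U_c$ into $\cF \cup R_{c,\beta}$), while the ``at most one'' part—promoting ``at least one'' to ``exactly one''—is forced by the disjointness $(\cF \cup R_{c,\beta}) \cap R_{d,b} = \emptyset$ contained in $\cF^R \cap R_{d,b} = \emptyset$.

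To characterize $\scV'$, I would observe that a configuration in $\scU'$ lies in $\scV$ precisely when additionally some $z_i$ meets the closed set $(A \setminus \mR_{A,\alpha}) \cup (U_c \setminus \mR_{c,\beta}) \cup (U_b \setminus \mR_{d,b})$. Intersecting this with $\cF^R \sqcup R_{d,b}$, where the $z_i$ must live by $(\scU' 1)$, picks out exactly $\del R_{A,\alpha} \cup \del R_{c,\beta} \cup \del R_{d,b}$; separating by index using $(\scU' 2)$--$(\scU' 4)$—the $z_j$ for $j \le n-2$ can only reach $\del R_{A,\alpha}$, while the $z_{n-1}, z_n$ can only reach $\del R_{c,\beta}$ or $\del R_{d,b}$—then produces $(\scV' 1)$.

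I expect the main obstacle to be not conceptual but rather the careful geometric bookkeeping needed to justify the disjointness $\cF^R \cap R_{d,b} = \emptyset$ from the hypotheses in Subsection \ref{subsec:list}; this requires tracking several pairwise and triple intersections of the various tubular neighborhoods. Once this disjointness is in hand, the remainder of the argument is a routine combination of the set-theoretic conditions $(\scU 1)$--$(\scU 4)$ and $(\mathring\scV 1)$.
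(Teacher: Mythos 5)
Your argument is correct and takes essentially the same route as the paper's proof: both rest on the identity $\cE\setminus\big((A\setminus R_{A,\alpha})\cup(U_c\setminus R_{c,\beta})\cup(U_b\setminus R_{d,b})\big)=\cF^R\sqcup R_{d,b}$ and then combine conditions $(\scU 1)$--$(\scU 4)$, $\neg(\mathring{\scV}1)$ and $(\scV 1)$, with the disjointness of $\cF\cup R_{c,\beta}$ and $R_{d,b}$ upgrading ``at least one'' to ``exactly one''. The only difference is presentational: you make explicit the geometric justification of the disjointness of $\cF^R$ and $R_{d,b}$, which the paper leaves to the standing hypotheses of Subsection \ref{subsec:list}.
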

\begin{proof}
Condition ($\scU'$1) is equivalent to ($\scU$1) together with the negation of ($\mathring{\scV}$1), since $\cE\setminus \big((A\setminus R_{A,\alpha})\cup(U_c\setminus R_{c,\beta})\cup(U_b\setminus R_{d,b})\big)=\cF^R\sqcup R_{d,b}$.
Similarly, condition ($\scU'$2) is equivalent to $(\scU2)\wedge\neg(\mathring{\scV}1)$, and the following equivalences hold:
\begin{itemize}
    \item $\big((\scU 3)\wedge\neg(\mathring{\scV}1)\big)\Longleftrightarrow \big(\set{z_{n-1},z_n}\cap(\cF\cup R_{c,\beta})\neq\emptyset\big)$;
    \item $\big((\scU 4)\wedge\neg(\mathring{\scV}1)\big)\Longleftrightarrow \big(\set{z_{n-1},z_n}\cap R_{d,b}\neq\emptyset\big)$.
\end{itemize}
Using that $\cF\cup R_{c,\beta}$ and $R_{d,b}$ are disjoint, we have that $\big((\scU 3)\wedge\neg(\mathring{\scV}1)\big)\wedge \big((\scU 4)\wedge\neg(\mathring{\scV}1)\big)$ is equivalent to $(\scU'3)\wedge(\scU'4)$.

Finally, condition ($\scV$1) together with the negation of ($\mathring{scV}$1) is equivalent to condition ($\scV'$1).
\end{proof}

\begin{notation}
\label{nota:FSZ}
For a finite set $S$ and a space $Z$ we denote by $Z^S$ the space of all functions $z\colon S\to Z$, and by $F_S(Z)\subset Z^S$ the subspace of injective functions. For $i\in S$ and $z\in Z^S$ we usually denote $z_i= z(i)\in Z$; we regard elements of $F_S(Z)$ as $S$-labeled configurations of distinct points in $Z$.
Clearly, when $S=\set{1,\dots,n}$ we have canonical identifications $Z^S=Z^n$ and $F_S(Z)\cong F_n(Z)$.
\end{notation}
We note that the pair $(\scU',\scV')$ decomposes as a disjoint union of two pairs $(\scU'_{n-1},\scV'_{n-1})$ and $(\scU'_n,\scV'_n)$, where for $i=n-1,n$ we let $(\scU'_i,\scV'_i)$ contain those configurations $(z_1,\dots,z_n)\in(\scU'\scV')$ for which $z_i\in R_{d,b}$.
\begin{definition}
For $i=n-1,n$, we let $\scX_i\subset F_{\set{1,\dots,n}\setminus\set{i}}(\cF^R)$ be the subspace of configurations satisfying ($\scU'$2) and $z_{2n-1-i}\in \cF\cup R_{c,\beta}$ (the latter condition is a specification of ($\scU'$3)), and we let $\scY_i\subset\scX_i$ be the subspace of configurations satisfying $z_{2n-1-i}\in \del R_{c,\beta}$ (the latter condition is a specification of ($\scV'$1)).
\end{definition}
The following is a straightforward corollary of Lemma \ref{lem:scUpcharacterisation}
\begin{corollary}
\label{cor:homeocouple}
For $i=n-1,n$,
\begin{itemize}
\item  we have a canonical homeomorphism of pairs
\[
(\scU'_i,\scV'_i) \cong (\scX_i,\scY_i)\times (F_{\set{i}}(R_{d,b}),F_{\set{i}}(\del R_{d,b}))
\]
\item the action of the diffeomorphism $\Phi$ on $F_n(\msurf)$ preserves each of the subspaces $\scU'_i$ and $\scV'_i$; the action of $\Phi$ on $\scU'_i$ is obtained by crossing the natural action of $\Phi$ on $\scX_i$ (as $\Phi$-invariant subspace of $F_{\set{1,\dots,n}\setminus\set{i}}(\msurf)$) with the identity of $R_{d,b}$.
\end{itemize}
\end{corollary}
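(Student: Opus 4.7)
The plan is to verify the corollary directly from Lemma \ref{lem:scUpcharacterisation} as a bookkeeping exercise on the defining conditions. The key observation is that $\cF^R$ and $R_{d,b}$ are disjoint subsurfaces of $\msurf$, so for any $(z_1,\dots,z_n)\in \scU'_i$ condition ($\scU'$1) partitions the particles between these two regions. Condition ($\scU'$2) puts $z_1,\dots,z_{n-2}$ in $\cF\cup R_{A,\alpha}\subset\cF^R$, while ($\scU'$3) and ($\scU'$4) together force $z_{2n-1-i}\in\cF\cup R_{c,\beta}\subset\cF^R$ and $z_i\in R_{d,b}$. The assignment $(z_1,\dots,z_n)\mapsto((z_j)_{j\neq i},\,z_i)$ then lands in $F_{\set{1,\dots,n}\setminus\set{i}}(\cF^R)\times F_{\set{i}}(R_{d,b})$; the disjointness $\cF^R\cap R_{d,b}=\emptyset$ makes $z_i\neq z_j$ automatic for $j\neq i$, so this is a homeomorphism onto $\scX_i\times F_{\set{i}}(R_{d,b})$, with the remaining defining constraints matching the definition of $\scX_i$ exactly.

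Next I would identify $\scV'_i$ with the boundary of the product pair. The three alternatives implicit in ($\scV'$1)---some $z_j$ with $j\le n-2$ on $\del R_{A,\alpha}$, $z_{2n-1-i}$ on $\del R_{c,\beta}$, or $z_i$ on $\del R_{d,b}$---split cleanly according to which product factor they involve: the first two depend only on the sub-configuration in $\cF^R$ and so cut out a subspace of $\scX_i$, namely $\scY_i$ (read as the union of both boundary loci, which is the interpretation consistent with the product-of-pairs statement of the corollary), while the third depends only on $z_i$ and picks out $F_{\set{i}}(\del R_{d,b})$. Hence $\scV'_i$ corresponds under the homeomorphism to $\scY_i\times F_{\set{i}}(R_{d,b})\cup \scX_i\times F_{\set{i}}(\del R_{d,b})$, the usual boundary of a product of pairs.

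For the second bullet, recall from Notation \ref{nota:Phi} that $\Phi$ is supported on $\cF$; hence $\Phi$ is the identity on $R_{d,b}$ and on the boundary segments $\del R_{A,\alpha}$, $\del R_{c,\beta}$ (which lie outside $\cF$), and preserves the subsurface $\cF^R$ setwise. Every condition defining $\scU'_i$ and $\scV'_i$ is therefore $\Phi$-stable, and under the product homeomorphism $\Phi$ acts as $\Phi|_{\scX_i}\times\id_{R_{d,b}}$, as required. The only real subtlety is matching the pair-of-spaces convention and confirming that the disjunction ($\scV'$1) splits cleanly along the two product factors; no deeper idea is needed.
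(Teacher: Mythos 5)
Your verification is correct and is exactly the ``straightforward'' bookkeeping from Lemma \ref{lem:scUpcharacterisation} that the paper has in mind (the paper records no separate proof): the disjointness $\cF^R\sqcup R_{d,b}$ makes the coordinate split $(z_j)_{j\neq i}\mapsto\scX_i$, $z_i\mapsto F_{\set{i}}(R_{d,b})$ a homeomorphism matching all four conditions, and the support of $\Phi$ in $\cF$ gives the equivariance. Your reading of $\scY_i$ as the union of both boundary loci (including the $\del R_{A,\alpha}$ locus, not only $z_{2n-1-i}\in\del R_{c,\beta}$) is indeed the interpretation the product-of-pairs statement requires, and it is the one the paper implicitly uses later, e.g.\ so that $t_i$ is a well-defined class in $H_{n-1}(\scX_i,\scY_i)$.
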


\begin{definition}
For $i=n-1,n$ we define $y'_i\in H^n(\scU'_i,\scV'_i)$ as the cohomology class represented by the oriented submanifold $Q_i$. See Figure \ref{fig:Hclassyprime}.
\end{definition}
\begin{figure}
  \centering
  \begin{tikzpicture}[xscale=.8,yscale=.8]
  
  \node at (-1,0) {\large $y'_{n-1}=-$};
  \node[blue] at (11.5,-2) {\large $r_{n-1}$};
  \node[magenta] at (12.5,-.7) {\large $s^{\vee}$};

  \fill[looseness=1, yellow!70!white] (.3,0) to[out=90, in=90] ++(10.9,0) to[out=-90,in=-90] ++(-10.9,0)    to[out=0,in=180] ++(.5,0)  to[out=-90, in=-90] ++(9.8,0) to[out=90,in=90] ++(-9.8,0)    to[out=180,in=0] ++(-.3,0);
\begin{scope}[shift={(15,0)},xscale=-1]
    \fill[looseness=1, yellow!70!white] (.3,0) to[out=90, in=90] ++(10.9,0) to[out=-90,in=-90] ++(-10.9,0)    to[out=0,in=180] ++(.5,0)  to[out=-90, in=-90] ++(9.8,0) to[out=90,in=90] ++(-9.8,0)    to[out=180,in=0] ++(-.3,0);
\end{scope}
    \fill[looseness=1, opacity=.6, white] (8,0) to[out=40,in=180] ++(3,2.5) to[out=0,in=140] ++(3,-2.5) to[out=180, in=0] (11.8,0) to[out=130,in=50] (10.2,0) to[out=180, in=0] (8,0);
\begin{scope}[shift={(15,0)},xscale=-1]
    \fill[looseness=1, opacity=.6, white] (8,0) to[out=40,in=180] ++(3,2.5) to[out=0,in=140] ++(3,-2.5) to[out=180, in=0] (11.8,0) to[out=130,in=50] (10.2,0) to[out=180, in=0] (8,0);
\end{scope}

  
    \draw[looseness=1, thick] (1,0) to[out=40,in=180] ++(3,2.5) to[out=0,in=140] ++(3,-2.5);
    \draw[looseness=1, thick] (3.2,0) to[out=50,in=130] ++(1.6,0);
    
    \draw[looseness=1, thick] (8,0) to[out=40,in=180] ++(3,2.5) to[out=0,in=140] ++(3,-2.5);
    \draw[looseness=1, thick] (10.2,0) to[out=50,in=130] ++(1.6,0);

\begin{scope}
    \clip[looseness=1] (14.75,0) to[out=-90, in=-90] ++(-11.0,0) to[out=0,in=180] ++(.6,0)  to[out=-90, in=-90] ++(9.8,0) to[out=0,in=180] ++(.6,0);
    \draw[looseness=1, blue, postaction={decorate}] (13,-2.5) to[out=90, in=-90] (13,0) to[out=90, in=90] (8.7,0) to[out=-90, in=90] (7.4,-3.5); 
\end{scope}
\begin{scope}
    \clip[looseness=1] (7.2,-3.5) to[out=90, in=-90] (8.5,0) to [out=90, in=90] (13.25,0) to[out=-90, in=90] (13.25,-2.38) to [out=200, in=20] (12.75,-2.55) to[out=90,in=-90] (12.75,0) to[out=90, in=90] (8.9,0) to[out=-90, in=90] (7.6,-3.51) to [out=-190, in=10] (7.2,-3.5); 
     \clip[looseness=1.1] (14.3,-.2) to[out=-90,in=-90] ++(-3.1,0) to[out=0, in=180] ++(.7,0) to[out=-90,in=-90] ++(1.7,0) to[out=0,in=0] ++(.7,0); 
    \fill[looseness=1.1, red!80!white, opacity=.5] (14.3,-.2) to[out=-90,in=-90] ++(-3.1,0) to[out=0, in=180] ++(.7,0) to[out=-90,in=-90] ++(1.7,0) to[out=0,in=0] ++(.7,0); 
    \draw[looseness=1, magenta, postaction={decorate}] (13,-2.5) to[out=90, in=-90] (13,0) to[out=90, in=90] (8.7,0) to[out=-90, in=90] node[left]{\tiny$d$} (7.4,-3.5); 
\end{scope}

    \draw[looseness=.8,thick] (0,0) to[out=-90,in=-90] ++(15,0) to[out=90,in=90] ++(-15,0);

    \node[blue] at (13,-2.37){\small$\bullet$}; \node[blue] at (12.8,-2.4){\tiny$1$};
    \node[blue] at (13,-2.2){\small$\bullet$};
    \node[blue] at (13,-2.03){\small$\bullet$}; \node[blue] at (12.6,-2.0){\tiny$n\!\!-\!\!2$};
    \node[blue] at (7.5,-2.9){\small$\bullet$}; \node[blue] at (7.2,-2.9){\tiny$n$};
    \node[magenta] at (13,-1)  {\small$\bullet$} ; \node[magenta] at (12.5,-1.1){\tiny$n\!\!-\!\!1$};


\begin{scope}[shift={(0,-8)}]
   \node at (-1,0) {\large $y'_n=$};
  \node[blue] at (11.5,-2) {\large $r_n$};
  \node[magenta] at (12.5,-.7) {\large $s^{\vee}$};

  \fill[looseness=1, yellow!70!white] (.3,0) to[out=90, in=90] ++(10.9,0) to[out=-90,in=-90] ++(-10.9,0)    to[out=0,in=180] ++(.5,0)  to[out=-90, in=-90] ++(9.8,0) to[out=90,in=90] ++(-9.8,0)    to[out=180,in=0] ++(-.3,0);
\begin{scope}[shift={(15,0)},xscale=-1]
    \fill[looseness=1, yellow!70!white] (.3,0) to[out=90, in=90] ++(10.9,0) to[out=-90,in=-90] ++(-10.9,0)    to[out=0,in=180] ++(.5,0)  to[out=-90, in=-90] ++(9.8,0) to[out=90,in=90] ++(-9.8,0)    to[out=180,in=0] ++(-.3,0);
\end{scope}
    \fill[looseness=1, opacity=.6, white] (8,0) to[out=40,in=180] ++(3,2.5) to[out=0,in=140] ++(3,-2.5) to[out=180, in=0] (11.8,0) to[out=130,in=50] (10.2,0) to[out=180, in=0] (8,0);
\begin{scope}[shift={(15,0)},xscale=-1]
    \fill[looseness=1, opacity=.6, white] (8,0) to[out=40,in=180] ++(3,2.5) to[out=0,in=140] ++(3,-2.5) to[out=180, in=0] (11.8,0) to[out=130,in=50] (10.2,0) to[out=180, in=0] (8,0);
\end{scope}

  
    \draw[looseness=1, thick] (1,0) to[out=40,in=180] ++(3,2.5) to[out=0,in=140] ++(3,-2.5);
    \draw[looseness=1, thick] (3.2,0) to[out=50,in=130] ++(1.6,0);
    
    \draw[looseness=1, thick] (8,0) to[out=40,in=180] ++(3,2.5) to[out=0,in=140] ++(3,-2.5);
    \draw[looseness=1, thick] (10.2,0) to[out=50,in=130] ++(1.6,0);

\begin{scope}
    \clip[looseness=1] (14.75,0) to[out=-90, in=-90] ++(-11.0,0) to[out=0,in=180] ++(.6,0)  to[out=-90, in=-90] ++(9.8,0) to[out=0,in=180] ++(.6,0);
    \draw[looseness=1, blue, postaction={decorate}] (13,-2.5) to[out=90, in=-90] (13,0) to[out=90, in=90] (8.7,0) to[out=-90, in=90] (7.4,-3.5); 
\end{scope}
\begin{scope}
    \clip[looseness=1] (7.2,-3.5) to[out=90, in=-90] (8.5,0) to [out=90, in=90] (13.25,0) to[out=-90, in=90] (13.25,-2.38) to [out=200, in=20] (12.75,-2.55) to[out=90,in=-90] (12.75,0) to[out=90, in=90] (8.9,0) to[out=-90, in=90] (7.6,-3.51) to [out=-190, in=10] (7.2,-3.5); 
     \clip[looseness=1.1] (14.3,-.2) to[out=-90,in=-90] ++(-3.1,0) to[out=0, in=180] ++(.7,0) to[out=-90,in=-90] ++(1.7,0) to[out=0,in=0] ++(.7,0); 
    \fill[looseness=1.1, red!80!white, opacity=.5] (14.3,-.2) to[out=-90,in=-90] ++(-3.1,0) to[out=0, in=180] ++(.7,0) to[out=-90,in=-90] ++(1.7,0) to[out=0,in=0] ++(.7,0); 
    \draw[looseness=1, magenta, postaction={decorate}] (13,-2.5) to[out=90, in=-90] (13,0) to[out=90, in=90] (8.7,0) to[out=-90, in=90] node[left]{\tiny$d$} (7.4,-3.5); 
\end{scope}

    \draw[looseness=.8,thick] (0,0) to[out=-90,in=-90] ++(15,0) to[out=90,in=90] ++(-15,0);

    \node[blue] at (13,-2.37){\small$\bullet$}; \node[blue] at (12.8,-2.4){\tiny$1$};
    \node[blue] at (13,-2.2){\small$\bullet$};
    \node[blue] at (13,-2.03){\small$\bullet$}; \node[blue] at (12.6,-2.0){\tiny$n\!\!-\!\!1$};
    \node[magenta] at (13,-1)  {\small$\bullet$} ; \node[magenta] at (12.6,-1.1){\tiny$n$};
   
\end{scope}

  \end{tikzpicture}
  \caption{The cohomology class $y'$ splits as a sum $y'_{n-1}+y'_n$, and each summand further factors as a cross product so that $y'_{n-1}=-r_{n-1}\times s^{\vee}$ and $y'_n=r_n\times s^{\vee}$.}
  \label{fig:Hclassyprime}
\end{figure}
In the previous definition we use that $\scV'_i$, which is closed in $\scU'_i$, admits an open neighbourhood in $\scU'_i$ which is disjoint from $Q_i$ and is homotopy equivalent to $\scV'_i$ itself: thus we are allowed to represent a cohomology class of $(\scU'_i,\scV'_i)$ by the proper, oriented submanifold $Q_i\subset\scU'_i$, which rather supports a Borel-Moore homology class.

The disjoint union and excision isomorphisms combine as an isomorphism $H^n(\scU'_n,\scV'_n)\oplus H^n(\scU'_{n-1},\scV'_{n-1})\cong H^n(\scU',\scV')\cong H^n(\scU,\scV)$, and the images of $y'_{n}$ and $y'_{n-1}$ are two cohomology classes in $H^n(\scU,\scV)$ whose sum is $y'$. Therefore, in order to compute the Kronecker pairing $\left<x,\phi^*y\right>=\left<x',\Phi^*(y')\right>_{\scU\mathrm{rel.}\scV}$, we can first compute the homology class corresponding to $x'$ in $H_n(\scU'_n,\scV'_n)\oplus H_n(\scU'_{n-1},\scV'_{n-1})$, and then take the Kronecker pairing of this class with $\Phi^*(y'_{n})+\Phi^*(y'_{n-1})$.

\subsection{Replacing \texorpdfstring{$x'$}{x'} by an excided homology class}
\begin{definition}
Let $I=[0,1]$, and let $N_1'=[0,1]^n$. Fix orientation-preserving parametrisations
\[
\sigma_{a_1},\dots,\sigma_{a_{n-2}},\sigma_c,\sigma_b\colon I\to \msurf
\]
of each closed segment $a_1\cap R_{A,\alpha},\dots, a_{n-2}\cap R_{A,\alpha}, c\cap R_{c,\beta}$ and $b\cap R_{d,b}$, where each of the latter segments inherits an orientation from the oriented curve or arc in which it is contained.

We define a map $\iota_{N_1'}\colon \set{n-1,n}\times N_1'\to \msurf^n$ as follows:
\begin{itemize}
    \item on $\set{n-1}\times N_1'\cong N_1'$ we take the cartesian product $\sigma_{a_1}\times\dots\times\sigma_{a_{n-2}}\times\sigma_b\times\sigma_c$;
    \item on $\set{n}\times N_1'\cong N_1'$ we take the cartesian product $\sigma_{a_1}\times\dots\times\sigma_{a_{n-2}}\times\bar\sigma_c\times\sigma_b$, where $\bar\sigma_c$ is the composition of a fixed orientation-reversing homeomorphism $[0,1]\overset{\cong}{\to}[0,1]$ and the map $\sigma_c$.
\end{itemize}
\end{definition}
Note that the restriction of $\iota_{N_1'}$ on $\set{i}\times N_1'$ uses $\sigma_b$ on the $i$\textsuperscript{th} coordinate.
We observe that $\iota_{N_1'}$ has image inside $\scU'$, and sends $\del N_1'$ inside $\scV'$.
Moreover $\iota_{N_1'}$, considered as a map $\set{n-1,n}\times N_1'\to \msurf^n$, factors (uniquely) as the composition of an orientation-preserving embedding $\varepsilon\colon \set{n-1,n}\times N'_1\hookrightarrow N_1$ followed by the map $\iota_{N_1}\colon N_1\to \msurf^n$, and the difference $N_1\setminus\varepsilon(\set{n-1,n}\times\mathring{N}_1')$ is sent inside $\scV$ by $\iota_{N_1}$. By excision we obtain the following lemma.
\begin{lemma}
\label{lem:xsplitting}
The relative fundamental class of $\set{n-1,n}\times (N'_1,\del N'_1)$ is sent along $\iota_{N'_1}$ to the class $x'\in H_n(\scU,\scV)$, i.e. to the image of the fundamental class of $N_1$ along $\iota_{N_1}\colon N_1\to\scU$.
\end{lemma}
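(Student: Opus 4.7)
The plan is to deduce the lemma from a straightforward excision argument, once the factorisation $\iota_{N_1'} = \iota_{N_1} \circ \varepsilon$ is made explicit. Recall that $N_1 = (S^1)^{n-2} \times \cS_2$, and that the two copies of $\cS_{1,1} \subset \cS_2$ parametrise configurations of two particles lying respectively on $b \cup c$ and on $c \cup b$ (using $\bar\iota_c$ on one of the two factors of the second copy). Under $\iota_{N_1}$, the preimage of the locus where $z_i \in R_{A,\alpha} \cap a_i$ for $1 \le i \le n-2$ and $\{z_{n-1}, z_n\} \subset R_{c,\beta} \cup R_{d,b}$ consists of two disjoint cubes inside $N_1$, indexed by whether the particle on $b \cap R_{d,b}$ is the $(n-1)$-st or the $n$-th; this defines $\varepsilon$ on $\set{n-1,n} \times N_1'$. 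The orientation-reversal $\bar\sigma_c$ appearing in $\iota_{N_1'}|_{\set{n}\times N_1'}$ compensates for the orientation-reversing inclusion of one copy of $\cS_{1,1}$ into $\cS_2$, so that $\varepsilon$ is orientation-preserving on both components.

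Next I would verify, using Lemma \ref{lem:scUpcharacterisation}, that $\iota_{N_1'}$ is a map of pairs $\set{n-1,n} \times (N_1', \del N_1') \to (\scU', \scV')$. Conditions ($\scU'$1--4) follow directly from the ranges of the parametrisations $\sigma_{a_i}, \sigma_b, \sigma_c$, and a point of $\del N_1'$ forces some coordinate to reach an endpoint of $[0,1]$, placing the corresponding particle on $\del R_{A,\alpha} \cup \del R_{c,\beta} \cup \del R_{d,b}$, which is exactly ($\scV'$1).

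For the excision step, I would combine the observation stated just before the lemma, namely that $\iota_{N_1}$ sends $N_1 \setminus \varepsilon(\set{n-1,n} \times \mathring{N}_1')$ into $\scV$, with the excision isomorphism
\[
H_n\bigl(\varepsilon(\set{n-1,n} \times N_1'),\, \varepsilon(\set{n-1,n} \times \del N_1')\bigr) \cong H_n\bigl(N_1,\, N_1 \setminus \varepsilon(\set{n-1,n} \times \mathring{N}_1')\bigr),
\]
obtained by excising the open subspace $N_1 \setminus \varepsilon(\set{n-1,n} \times N_1')$. Under this isomorphism, the image of $[N_1] \in H_n(N_1)$ in the right-hand group corresponds to the relative fundamental class of the compact manifold-with-boundary $\varepsilon(\set{n-1,n} \times N_1')$, and since $\varepsilon$ is orientation-preserving, that class is $\varepsilon_*$ of the relative fundamental class of $\set{n-1,n} \times (N_1', \del N_1')$. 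Pushing forward along $\iota_{N_1}$ and using that the image of $[N_1]$ in $H_n(\scU,\scV)$ is $x'$ by definition yields the claim. The only real subtlety is the orientation bookkeeping on the $\set{n} \times N_1'$ component, which is precisely what the presence of $\bar\sigma_c$ in the definition of $\iota_{N_1'}$ is designed to handle.
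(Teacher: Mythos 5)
Your overall route is the same as the paper's: factor $\iota_{N_1'}$ through an embedding $\varepsilon$ into $N_1$, observe that $\iota_{N_1}$ sends $N_1\setminus\varepsilon(\set{n-1,n}\times\mathring{N}_1')$ into $\scV$, and conclude by excising down to the two cubes and localising the fundamental class of $N_1$; these are exactly the observations the paper records before saying ``by excision''. Your identification of the image of $\varepsilon$ as the preimage of the locus cut out by the segments $a_i\cap R_{A,\alpha}$, $c\cap R_{c,\beta}$, $b\cap R_{d,b}$, your verification via Lemma \ref{lem:scUpcharacterisation} that $\iota_{N_1'}$ is a map of pairs into $(\scU',\scV')\subset(\scU,\scV)$, and your excision--naturality chain are all fine (the open set you excise has closure not contained in the interior of $N_1\setminus\varepsilon(\set{n-1,n}\times\mathring{N}_1')$, so strictly one shrinks by a collar first, but this is the same level of informality as the paper).

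The step I do not accept as written is the orientation bookkeeping, which is the only content of the lemma beyond the set-theoretic observations. You claim that the reversal $\bar\sigma_c$ on $\set{n}\times N_1'$ compensates for the orientation-reversing inclusion of one copy of $\cS_{1,1}$ into $\cS_2$. But that copy of $\cS_{1,1}$ is carried into configuration space by $\bar\iota_c\times\iota_b$, and $\bar\iota_c$ already contains a complex conjugation acting on the same circle coordinate that $\bar\sigma_c$ flips: when you lift the cube to the torus coordinates of the second copy, the flip in $\bar\sigma_c$ and the conjugation in $\bar\iota_c$ cancel, so both cubes are parametrised with the same sign relative to their ambient tori $S^1\times S^1$ (independently of how $\iota_b,\iota_c$ are oriented). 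Hence $\bar\sigma_c$ cannot also be what offsets the reversing inclusion $\cS_{1,1}\hookrightarrow\cS_2$; completing your accounting as stated, the two components of $\varepsilon$ acquire opposite signs relative to the product orientation of $N_1$, not both $+1$. You therefore need to redo this check, tracking all three reversals (the conjugation in $\bar\iota_c$, the flip in $\bar\sigma_c$, and which embedding $\cS_{1,1}\hookrightarrow\cS_2$ preserves orientation), before you may assert that both relative fundamental classes push forward to $x'$ with positive sign; the paper asserts that $\varepsilon$ is orientation-preserving without argument, so this is precisely the point where a proof must supply a genuine verification rather than the heuristic you give.
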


\begin{definition}
For $i=n-1,n$ we denote by $x'_i\in H_n(\scU'_i,\scV'_i)$ the image along $\iota_{N_1'}$ of the relative fundamental class of $\set{i}\times(N_1',\del N_1')$. See Figure \ref{fig:Hclassxprime}
\end{definition}
\begin{figure}
  \centering
  \begin{tikzpicture}[xscale=.8,yscale=.8, decoration={markings,mark=at position 0.1 with {\arrow{>}}}]
  \node at (-1,0) {\large $x'_{n-1}=$};
  \node[blue] at (4,-1.7) {\large $t_{n-1}$};
  \node[magenta] at (12.5,-1.7) {\large $s$};

  \fill[looseness=1, yellow!70!white] (.3,0) to[out=90, in=90] ++(10.9,0) to[out=-90,in=-90] ++(-10.9,0)    to[out=0,in=180] ++(.3,0)  to[out=-90, in=-90] ++(10.2,0) to[out=90,in=90] ++(-10.2,0)    to[out=180,in=0] ++(-.3,0);
\begin{scope}[shift={(15,0)},xscale=-1]
    \fill[looseness=1, yellow!70!white] (.3,0) to[out=90, in=90] ++(10.9,0) to[out=-90,in=-90] ++(-10.9,0)    to[out=0,in=180] ++(.3,0)  to[out=-90, in=-90] ++(10.2,0) to[out=90,in=90] ++(-10.2,0)    to[out=180,in=0] ++(-.3,0);
\end{scope}

\begin{scope}
    \clip[looseness=1] (7.2,-3.5) to[out=90, in=-90] (8.5,0) to [out=90, in=90] (13.25,0) to[out=-90, in=90] (13.25,-2.38) to [out=200, in=20] (12.75,-2.55) to[out=90,in=-90] (12.75,0) to[out=90, in=90] (8.9,0) to[out=-90, in=90] (7.6,-3.51) to [out=-190, in=10] (7.2,-3.5); 
    \fill[looseness=1.1, red!80!white, opacity=.5] (14.35,-.2) to[out=-90,in=-90] ++(-3.05,0) to[out=0, in=180] ++(.4,0) to[out=-90,in=-90] ++(2.3,0) to[out=0,in=0] ++(.4,0); 
    \draw[looseness=1.1,magenta] (11.5,-.2) to[out=-90, in=-90]  ++(2.7,0);
\end{scope}
\node[magenta] at (13,-1.1) {$\bullet$}; \node[magenta] at (13,-1.4) {\tiny$n\!\!-\!\!1$};

    \fill[looseness=1, opacity=.6, white] (8,0) to[out=40,in=180] ++(3,2.5) to[out=0,in=140] ++(3,-2.5) to[out=180, in=0] (11.8,0) to[out=130,in=50] (10.2,0) to[out=180, in=0] (8,0);
\begin{scope}[shift={(15,0)},xscale=-1]
    \fill[looseness=1, opacity=.6, white] (8,0) to[out=40,in=180] ++(3,2.5) to[out=0,in=140] ++(3,-2.5) to[out=180, in=0] (11.8,0) to[out=130,in=50] (10.2,0) to[out=180, in=0] (8,0);
\end{scope}

  
    \draw[looseness=1, thick] (1,0) to[out=40,in=180] ++(3,2.5) to[out=0,in=140] ++(3,-2.5);
    \draw[looseness=1, thick] (3.2,0) to[out=50,in=130] ++(1.6,0);
    
    \draw[looseness=1, thick] (8,0) to[out=40,in=180] ++(3,2.5) to[out=0,in=140] ++(3,-2.5);
    \draw[looseness=1, thick] (10.2,0) to[out=50,in=130] ++(1.6,0);
    
\begin{scope}
\clip[looseness=1] (14.75,0) to[out=-90, in=-90] ++(-11.0,0) to[out=0,in=180] ++(.5,0)  to[out=-90, in=-90] ++(10.1,0) to[out=0,in=180] ++(.4,0);
      \fill[red!70!white, opacity=.6, looseness=.9] (1.5,0) to[out=90, in=90] ++(5.1,0) to[out=-90,in=-90] ++(-5.1,0) to[out=0,in=180] ++(1.6,0) to[out=-90,in=-90] ++(2.4,0) to[out=90,in=90] ++(-2.4,0) to [out=180,in=0] ++(-1.6,0);
    \draw[looseness=.9, postaction={decorate},blue] (1.75,0)  to[out=90, in=90] ++(4.6,0) to[out=-90,in=-90] ++(-4.6,0);
    \draw[looseness=.85, postaction={decorate},blue] (2.1,0) to[out=90, in=90]  ++(4.1,0) to[out=-90,in=-90] ++(-4.1,0);
    \draw[looseness=.9, postaction={decorate},blue] (2.8,0) to[out=90, in=90] ++(3.0,0) to[out=-90,in=-90] ++(-3.0,0);
\end{scope}
\node[blue] at (4.4,-1.2) {$\bullet$}; \node[blue] at (4.8,-1.2) {\tiny$1$};
\node[blue] at (4.3,-1) {$\bullet$}; \node[blue] at (4.65,-1) {\tiny$2$};
\node[blue] at (4.2,-.8) {$\bullet$}; \node[blue] at (4.7,-.7) {\tiny$n\!\!-\!\!2$};

\begin{scope}
\clip[looseness=1] (.25,0) to[out=-90, in=-90] ++(11.0,0) to[out=180,in=0] ++(-.5,0) to[out=-90,in=-90] ++(-10.1,0) to[out=180,in=0] ++(-.4,0);
\fill[looseness=1.1, red!80!white, opacity=.5] (9.3,0) to[out=-90, in=-90] ++(3.1,0) to[out=-180,in=0] ++(-.4,0) to[out=-90,in=-90] ++(-2.3,0) to[out=180,in=0] ++(-.4,0); 

    \draw[looseness=1.1, postaction={decorate},blue] (9.5,0)  to[out=-90, in=-90] node{$\bullet$} ++(2.7,0) to[out=90,in=90] ++(-2.7,0);
\end{scope}
\node[blue] at (11,-1.2) {\tiny$n$} ;

    \fill[looseness=1, orange, opacity=.2] (7.2,-3.5) to[out=90, in=-90] (8.5,0) to [out=90, in=90] (13.2,0) to[out=-90, in=90] (13.2,-2.38) to [out=200, in=20] (12.8,-2.55) to[out=90,in=-90] (12.8,0) to[out=90, in=90] (8.9,0) to[out=-90, in=90] (7.6,-3.51) to [out=-190, in=10] (7.2,-3.5);
    
    \draw[looseness=.8,thick] (0,0) to[out=-90,in=-90] ++(15,0) to[out=90,in=90] ++(-15,0);


\begin{scope}[shift={(0,-8)}]
\node at (-1,0) {\large $x'_n=-$};

  \node[blue] at (4,-1.7) {\large $t_n$};
  \node[magenta] at (12.5,-1.7) {\large $s$};

  \fill[looseness=1, yellow!70!white] (.3,0) to[out=90, in=90] ++(10.9,0) to[out=-90,in=-90] ++(-10.9,0)    to[out=0,in=180] ++(.3,0)  to[out=-90, in=-90] ++(10.2,0) to[out=90,in=90] ++(-10.2,0)    to[out=180,in=0] ++(-.3,0);
\begin{scope}[shift={(15,0)},xscale=-1]
    \fill[looseness=1, yellow!70!white] (.3,0) to[out=90, in=90] ++(10.9,0) to[out=-90,in=-90] ++(-10.9,0)    to[out=0,in=180] ++(.3,0)  to[out=-90, in=-90] ++(10.2,0) to[out=90,in=90] ++(-10.2,0)    to[out=180,in=0] ++(-.3,0);
\end{scope}

\begin{scope}
    \clip[looseness=1] (7.2,-3.5) to[out=90, in=-90] (8.5,0) to [out=90, in=90] (13.25,0) to[out=-90, in=90] (13.25,-2.38) to [out=200, in=20] (12.75,-2.55) to[out=90,in=-90] (12.75,0) to[out=90, in=90] (8.9,0) to[out=-90, in=90] (7.6,-3.51) to [out=-190, in=10] (7.2,-3.5); 
    \fill[looseness=1.1, red!80!white, opacity=.5] (14.35,-.2) to[out=-90,in=-90] ++(-3.05,0) to[out=0, in=180] ++(.4,0) to[out=-90,in=-90] ++(2.3,0) to[out=0,in=0] ++(.4,0); 
    \draw[looseness=1.1,magenta] (11.5,-.2) to[out=-90, in=-90]  ++(2.7,0);
\end{scope}
\node[magenta] at (13,-1.1) {$\bullet$}; \node[magenta] at (13,-1.4) {\tiny$n$};

    \fill[looseness=1, opacity=.6, white] (8,0) to[out=40,in=180] ++(3,2.5) to[out=0,in=140] ++(3,-2.5) to[out=180, in=0] (11.8,0) to[out=130,in=50] (10.2,0) to[out=180, in=0] (8,0);
\begin{scope}[shift={(15,0)},xscale=-1]
    \fill[looseness=1, opacity=.6, white] (8,0) to[out=40,in=180] ++(3,2.5) to[out=0,in=140] ++(3,-2.5) to[out=180, in=0] (11.8,0) to[out=130,in=50] (10.2,0) to[out=180, in=0] (8,0);
\end{scope}


  
    \draw[looseness=1, thick] (1,0) to[out=40,in=180] ++(3,2.5) to[out=0,in=140] ++(3,-2.5);
    \draw[looseness=1, thick] (3.2,0) to[out=50,in=130] ++(1.6,0);
    
    \draw[looseness=1, thick] (8,0) to[out=40,in=180] ++(3,2.5) to[out=0,in=140] ++(3,-2.5);
    \draw[looseness=1, thick] (10.2,0) to[out=50,in=130] ++(1.6,0);
    
\begin{scope}
\clip[looseness=1] (14.75,0) to[out=-90, in=-90] ++(-11.0,0) to[out=0,in=180] ++(.5,0)  to[out=-90, in=-90] ++(10.1,0) to[out=0,in=180] ++(.4,0);
      \fill[red!70!white, opacity=.6, looseness=.9] (1.5,0) to[out=90, in=90] ++(5.1,0) to[out=-90,in=-90] ++(-5.1,0) to[out=0,in=180] ++(1.6,0) to[out=-90,in=-90] ++(2.4,0) to[out=90,in=90] ++(-2.4,0) to [out=180,in=0] ++(-1.6,0);
    \draw[looseness=.9, postaction={decorate},blue] (1.75,0)  to[out=90, in=90] ++(4.6,0) to[out=-90,in=-90] ++(-4.6,0);
    \draw[looseness=.85, postaction={decorate},blue] (2.1,0) to[out=90, in=90]  ++(4.1,0) to[out=-90,in=-90] ++(-4.1,0);
    \draw[looseness=.9, postaction={decorate},blue] (2.8,0) to[out=90, in=90] ++(3.0,0) to[out=-90,in=-90] ++(-3.0,0);
\end{scope}
\node[blue] at (4.4,-1.2) {$\bullet$}; \node[blue] at (4.8,-1.2) {\tiny$1$};
\node[blue] at (4.3,-1) {$\bullet$}; \node[blue] at (4.65,-1) {\tiny$2$};
\node[blue] at (4.2,-.8) {$\bullet$}; \node[blue] at (4.7,-.7) {\tiny$n\!\!-\!\!2$};

\begin{scope}
\clip[looseness=1] (.25,0) to[out=-90, in=-90] ++(11.0,0) to[out=180,in=0] ++(-.5,0) to[out=-90,in=-90] ++(-10.1,0) to[out=180,in=0] ++(-.4,0);
\fill[looseness=1.1, red!80!white, opacity=.5] (9.3,0) to[out=-90, in=-90] ++(3.1,0) to[out=-180,in=0] ++(-.4,0) to[out=-90,in=-90] ++(-2.3,0) to[out=180,in=0] ++(-.4,0); 

    \draw[looseness=1.1, postaction={decorate},blue] (9.5,0)  to[out=-90, in=-90] node{$\bullet$} ++(2.7,0) to[out=90,in=90] ++(-2.7,0);
\end{scope}
\node[blue] at (11,-1.2) {\tiny$n\!\!-\!\!1$} ;

    \fill[looseness=1, orange, opacity=.2] (7.2,-3.5) to[out=90, in=-90] (8.5,0) to [out=90, in=90] (13.2,0) to[out=-90, in=90] (13.2,-2.38) to [out=200, in=20] (12.8,-2.55) to[out=90,in=-90] (12.8,0) to[out=90, in=90] (8.9,0) to[out=-90, in=90] (7.6,-3.51) to [out=-190, in=10] (7.2,-3.5);
    
    \draw[looseness=.8,thick] (0,0) to[out=-90,in=-90] ++(15,0) to[out=90,in=90] ++(-15,0);

\end{scope}

  \end{tikzpicture}
  \caption{The homology class $x'$ splits as a sum $x'_{n-1}+x'_n$, and each summand further factors as a cross product so that $x'_{n-1}=-t_{n-1}\times s$ and $x'_n=t_n\times s$.}
  \label{fig:Hclassxprime}
\end{figure}
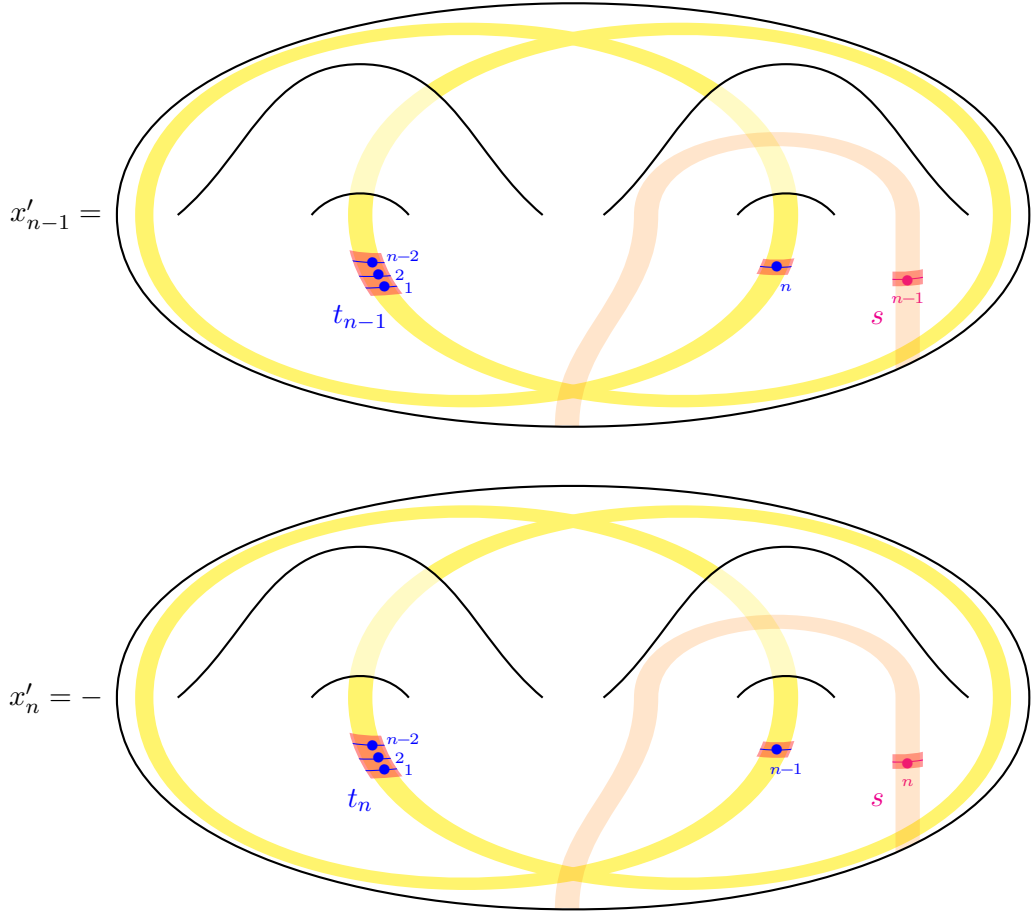
By Lemma \ref{lem:xsplitting} the sum $x'_{n-1}+ x'_n$ corresponds to 
$x'$ along the composite isomorphism
$H_n(\scU'_{n-1},\scV'_{n-1})\oplus H_n(\scU'_n,\scV'_n) \cong  H_n(\scU',\scV')\cong H_n(\scU,\scV)$.
Together with the discussion of the previous subsection, we obtain the following lemma.
\begin{lemma}
\label{lem:KrxpiPhiypi}
The Kronecker pairing $\left<x,\phi^*y\right>$ can be computed as
\[
\left<x'_{n-1},\Phi^*(y'_{n-1})\right>_{\scU'_{n-1}\mathrm{rel.}\scV'_{n-1}}+\left<x'_{n},\Phi^*(y'_{n})\right>_{\scU'_{n}\mathrm{rel.}\scV'_{n}}.
\]
\end{lemma}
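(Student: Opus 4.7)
The plan is to chain together three facts that have essentially all been established in Subsections 4.1--4.3, so the proof amounts to bookkeeping. First, I would invoke the equality $\left<x,\phi^*(y)\right> = \left<x',\Phi^*(y')\right>_{\scU\,\mathrm{rel.}\,\scV}$ recorded at the end of the first subsection of Section 4; it follows from the third principle of Subsection \ref{subsec:Poincare} using that $N_2$ (and hence $\Phi^{-1}(N_2)$) is disjoint from $\scV$, and that $\mscV\hookrightarrow\scV$ is a homotopy equivalence because $\scV$ is a topological submanifold with boundary of $\scU$.

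Next, I would apply excision along $\mscV\subset\scV$ to replace the pair $(\scU,\scV)$ by $(\scU',\scV')$, and then use the topological disjoint-union decomposition $(\scU',\scV') = (\scU'_{n-1},\scV'_{n-1})\sqcup(\scU'_n,\scV'_n)$ to split the relative (co)homology as a direct sum compatibly with the Kronecker pairing. By Lemma \ref{lem:xsplitting} the class $x'$ corresponds to $x'_{n-1}+x'_n$ under the composite isomorphism $H_n(\scU'_{n-1},\scV'_{n-1})\oplus H_n(\scU'_n,\scV'_n)\cong H_n(\scU,\scV)$, while the discussion just before the definition of the $y'_i$ identifies $y'$ with $y'_{n-1}+y'_n$ via the disjoint decomposition $N_2\cap\scU = Q_{n-1}\sqcup Q_n$.

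Finally, because $\Phi$ preserves each $\scU'_i$ and each $\scV'_i$ separately (Corollary \ref{cor:homeocouple}), the induced map $\Phi^*$ is diagonal with respect to the direct-sum decomposition of cohomology. Hence the cross terms $\left<x'_i,\Phi^*(y'_j)\right>$ with $i\ne j$ vanish, as they pair classes that live in (co)homology of disjoint components, and only the diagonal terms survive, giving exactly the formula of the lemma. The one point worth double-checking is the $\Phi$-invariance of the splitting: since $\Phi$ is supported on $\cF$ and in particular fixes $R_{d,b}$ pointwise, whichever of $z_{n-1},z_n$ lies in $R_{d,b}$ still lies there after applying $\Phi$, so $\Phi$ does not move a configuration across the disjoint-union boundary. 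There is no genuine obstacle beyond this bookkeeping; all the nontrivial geometry was absorbed into the careful choice of $\scU$, $\scV$, $\scU'$ and $\scV'$ in the preceding subsections.
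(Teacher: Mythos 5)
Your proposal is correct and follows essentially the same route as the paper: reduce to $\left<x',\Phi^*(y')\right>_{\scU\,\mathrm{rel.}\,\scV}$ via the Poincar\'e-duality principles, pass through excision and the disjoint-union splitting $(\scU',\scV')=(\scU'_{n-1},\scV'_{n-1})\sqcup(\scU'_n,\scV'_n)$, identify $x'$ with $x'_{n-1}+x'_n$ (Lemma \ref{lem:xsplitting}) and $y'$ with $y'_{n-1}+y'_n$, and use the $\Phi$-invariance of each piece (Corollary \ref{cor:homeocouple}) so that only the diagonal pairings survive. Your explicit remark that cross terms vanish and that $\Phi$ fixes $R_{d,b}$ pointwise is exactly the bookkeeping the paper leaves implicit.
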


\subsection{Decomposing the classes \texorpdfstring{$x_i'$}{xi'} as products}
We can make a step further. We identify the cube $\set{n-1}\times N_1'$ with the product $I^{\set{1,\dots,n-2,n}}\times I^{\set{n-1}}$
and the cube $\set{n}\times N_1'$ with the product $I^{\set{1,\dots,n-2,n-1}}\times I^{\set{n}}$. Note that the first identification is orientation-reversing, as we swap the coordinates relative to the indices $n-1$ and $n$, whereas the second is orientation-preserving.

The map $\iota_{N_1'}$ restricts to product maps on both $\set{n-1}\times N_1'$
and $\set{n}\times N_1'$, with images in $\scU'_{n-1}$ and $\scU'_n$ respectively, namely the maps
\[
(\sigma_{a_1}\times\dots\times\sigma_{a_{n-2}}\times\sigma_c)\times \sigma_b\colon I^{\set{1,\dots,n-2,n}}\times I^{\set{n-1}}\to \scX_{n-1}\times F_{\set{n-1}}(R_b,d);
\]
\[
(\sigma_{a_1}\times\dots\times\sigma_{a_{n-2}}\times\bar\sigma_c)\times \sigma_b\colon I^{\set{1,\dots,n-2,n-1}}\times I^{\set{n}}\to \scX_{n}\times F_{\set{n}}(R_b,d);
\]
\begin{definition}
For $i=n-1,n$
we denote by $t_i\in H_{n-1}(\scX_i,\scY_i)$ the image of the relative fundamental class of $I^{\set{1,\dots,n}\setminus\set{i}}$ along the product map $\sigma_{a_1}\times\dots\times\sigma_{a_{n-2}}\times\sigma_c$ (for $i=n$ we replace the last factor by $\bar\sigma_c$).

We denote by $s\in H_1(R_{d,b}),\del R_{d,b})$ the image of the relative fundamental class of $I$ along $\sigma_b$.
\end{definition}
For $i=n-1,n$, up to regarding $s$ as a class in $H_1(F_{\set{i}}(R_{d,b}),F_{\set{i}}(\del R_{d,b}))$,  the previous discussion shows that $x'_i=(-1)^{n-i}t_i\times s$, where the homology cross product is with respect to the product decomposition of the couple $(\scU'_i,\scV'_i)$ from Corollary \ref{cor:homeocouple}. The difference in sign comes from the above remark about possible change of orientation along canonical identifications of the two copies of $N'_1$ with the product of two cubes of dimensions $n-1$ and 1.

In a similar way, for $i=n-1,n$ we have inclusions $Q_i\subset \scU'_i$. For $i=n$, the product decomposition $W_{n-1}\times \md_b$ of $Q_n$ is compatible with the product decomposition $\scX_n\times F_{\set{n}}(R_{d,b})$ of $\scU'_n$ along the inclusion: the inclusion $Q_n\subset \scU'_n$ is the product of the inclusions $W_{n-1}\subset \scX_n$ and $\md_b\subset R_{d,b}\cong F_{\set{n}}(R_{d,b})$. In the same way we have a product inclusion of $Q_{n-1}$ in $\scU'_{n-1}$: more precisely, we have a canonical, orientation-reversing homeomorphism
$Q_{n-1}\cong (W_{n-2}\times\md_{\alpha\beta})\times \md_b$ and a canonical orientation-preserving homeomorphism $\scU'_{n-1}\cong \scX_{n-1}\times F_{\set{n-1}}(R_{d,b})$, and the $Q_{n-1}$ in $\scU'_{n-1}$ can be written as the product of the inclusions $(W_{n-2}\times\md_{\alpha\beta})\subset \scX_{n-1}$ and $\md_b\subset F_{\set{n-1}}(R_{d,b})$.

\begin{definition}
\label{defn:rn}
We denote by $s^{\vee}\in H^1(R_{d,b},\del R_{d,b})$ the cohomology class represented by the oriented  submanifold $\md_b$, which is disjoint from $\del R_{d,b}$. Again, we use that $\del R_{d,b}$ admits an open neighbourhood in $R_{d,b}$ that is disjoint from $\md_b$ and is homotopy equivalent to $\del R_{d,b}$.
\ref{subsec:list}.

We denote by $r_n\in H^{n-1}(\scX_n,\scY_n)$ the cohomology class represented by the oriented submanifold $W_{n-1}$, which is disjoint from $\scY_n$, and we denote by $r_{n-1}\in H^{n-1}(\scX_{n-1},\scY_{n-1})$ the cohomology class represented by the oriented submanifold $W_{n-2}\times\md_{\alpha\beta}$, which is disjoint from $\scY_{n-1}$.
\end{definition}
The notation for $s^{\vee}$ is justified by the fact that $\left<s,s^{\vee}\right>_{R_{d,b}\mathrm{rel.}\del R_{d,b}}=1$, using the convention from Subsection \ref{subsec:list}.

For $i=n-1,n$, up to regarding $s^{\vee}$ as a class in $H^1(F_{\set{i}}(R_{d,b}),F_{\set{i}}(\del R_{d,b}))$, the previous discussion shows that $y'_{i}=(-1)^{n-i}r_{i}\times s^{\vee}$,
where the cohomology cross product is with respect to the product decomposition of the couple $(\scU'_i,\scV'_i)$ from Corollary \ref{cor:homeocouple}. Again, the difference of sign is due to the possible change in orientation in the identification of $Q_i$ with the product of an oriented $(n-1)$-manifold and $\md_b$.

Using the compatibility of Kronecker pairing and cross product, together with the second part of Corollary \ref{cor:homeocouple}, we arrive at the following proposition, which complements Lemma \ref{lem:KrxpiPhiypi}
\begin{proposition}
\label{prop:Kroneckertiri}
The Kronecker pairing $\left<x,\phi^*y\right>$ can be computed as
\[
\left<t_{n-1},\Phi^*(r_{n-1})\right>_{\scX_{n-1}\mathrm{rel.}\scY_{n-1}}+\left<t_n,\Phi^*(r_n)\right>_{\scX_n\mathrm{rel.}\scY_n}.
\]
\end{proposition}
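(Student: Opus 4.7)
The plan is short: substitute the product decompositions $x'_i = (-1)^{n-i} t_i \times s$ and $y'_i = (-1)^{n-i} r_i \times s^{\vee}$ into the formula of Lemma \ref{lem:KrxpiPhiypi}, and then use the Künneth-type compatibility of the relative Kronecker pairing with the cross product to factor each summand. First, by Corollary \ref{cor:homeocouple}, the action of $\Phi$ on $\scU'_i$ splits as the product of its natural action on $\scX_i$ with the identity on $F_{\set{i}}(R_{d,b})$, so that pullback commutes with the cohomology cross product, giving $\Phi^*(y'_i) = (-1)^{n-i} \Phi^*(r_i) \times s^{\vee}$. The two signs $(-1)^{n-i}$ coming from $x'_i$ and from $\Phi^*(y'_i)$ combine to $+1$.

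Next, applying the standard cross-product formula for the relative Kronecker pairing on the product pair $(\scX_i, \scY_i) \times (F_{\set{i}}(R_{d,b}), F_{\set{i}}(\del R_{d,b}))$ yields
\[
\left< x'_i, \Phi^*(y'_i) \right>_{\scU'_i \mathrm{rel.} \scV'_i} = \left< t_i, \Phi^*(r_i) \right>_{\scX_i \mathrm{rel.} \scY_i} \cdot \left< s, s^{\vee} \right>_{R_{d,b} \mathrm{rel.} \del R_{d,b}},
\]
modulo a Koszul sign depending on the degrees. Finally, $\left< s, s^{\vee} \right>_{R_{d,b} \mathrm{rel.} \del R_{d,b}} = 1$ is a direct transverse intersection count as in Subsection \ref{subsec:Poincare}: $s$ is represented by the oriented segment $b \cap R_{d,b}$, $s^{\vee}$ is dual to the oriented segment $\md_b = d \cap R_{d,b}$, and these two segments meet transversely in the single positive point $P$, by the sign convention of Subsection \ref{subsec:list}, since $\left<b,d\right>_{\msurf} = +1$. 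Summing over $i = n-1,n$ then gives the claimed formula.

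The essentially only delicate point, and the one I expect to be the main obstacle, is the sign bookkeeping. One must verify that the Koszul sign in the cross-product Kronecker formula, combined with the orientation conventions on the product pairs fixed in Corollary \ref{cor:homeocouple} and with the fact that the identification $\set{n-1} \times N'_1 \cong I^{\set{1,\dots,n-2,n}} \times I^{\set{n-1}}$ is orientation-reversing while the identification for $i = n$ is orientation-preserving, ultimately collapses to $+1$ on each summand. Granted this careful verification, which is purely formal once one writes out the degrees involved ($n-1$ on the $\scX_i$ factor and $1$ on the $R_{d,b}$ factor on both homology and cohomology sides), the proposition is simply the assembly of the product decompositions and the cross-product Kronecker formula, and nothing further is needed.
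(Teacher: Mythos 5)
Your argument is essentially the paper's own proof: it feeds the decompositions $x'_i=(-1)^{n-i}\,t_i\times s$ and $y'_i=(-1)^{n-i}\,r_i\times s^{\vee}$ into Lemma \ref{lem:KrxpiPhiypi}, uses the second part of Corollary \ref{cor:homeocouple} to commute $\Phi^*$ with the cross product, and concludes by compatibility of the Kronecker pairing with cross products together with $\left<s,s^{\vee}\right>_{R_{d,b}\,\mathrm{rel.}\,\del R_{d,b}}=1$, exactly as in the text (and with comparable care about the residual orientation/Koszul signs). The only harmless slip is naming the transverse intersection point of $b\cap R_{d,b}$ with $\md_b$ as $P$, which in the paper denotes $b\cap c$, not $b\cap d$; this does not affect the argument.
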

In fact, we will prove that the first summand vanishes, whereas the second summand is $1$. This is loosely speaking due to the fact that $r_{n-1}$ is represented by the manifold $W_{n-2}\times \md_{\alpha\beta}$, which splits as a product, whereas $r_n$ is represented by the manifold $W_{n-1}$, which does not split as a product.

We remark that at this point only the spaces $\scX_i$ and $\scY_i$ are involved in our computations, for $i=n-1,n$: these spaces arise as subspaces of $F_{\set{1,\dots,n}\setminus\set{i}}(\cF^R)$: thus our problem has been simplified, in that we are now considering ordered configurations of $n-1$ points (instead of $n$) on the subsurface $\cF^R\subset\msurf$ from Notation \ref{nota:FR}.

\subsection{Putting fog away from \texorpdfstring{$R_{A,\alpha}$}{Ra,alpha} and \texorpdfstring{$R_{c,\beta}$}{Rc,beta}}

\begin{notation}
\label{nota:me}
We denote by $e_\alpha, e_{\alpha\beta}\subset\bar\cF$ the images of $d_{\alpha}$ and $d_{\alpha\beta}$ along $\Phi^{-1}$, respectively, and by $\me_\alpha,\me_{\alpha\beta}$ their interior.
\end{notation}
Recall Definitions \ref{defn:Wi} and \ref{defn:rn}: the cohomology class $\Phi^*(r_n)\in H^{n-1}(\scX_n,\scY_n)$ is represented by the submanifold $\Phi^{-1}(W_{n-1})\subset\scX_n\setminus\scY_n$, containing all configurations $(z_1,\dots,z_{n-1})$ such that the points $z_1,\dots,z_{n-1}$ occur in this order on the oriented open segment $\me_\alpha$. Similarly, $\Phi^*(r_{n-1})\in H^{n-1}(\scX_{n-1},\scY_{n-1})$ is represented by the submanifold $\Phi^{-1}(W_{n-2}\times F_{\set{n}}(\md_{\alpha\beta}))$, containing configurations $(z_1,\dots,z_{n-2},z_n)$ such that $(z_1,\dots,z_{n-2})$ occur in this order on $\me_\alpha$, and $z_n\in\me_{\alpha\beta}$.

We apply another excision argument to compute the Kronecker pairings from Proposition \ref{prop:Kroneckertiri}.
First, we refine the notation from Subsection \ref{subsec:list}.
\begin{notation}
\label{nota:Raialpha}
We fix small, disjoint tubular neighbourhoods $U_{a_1},\dots,U_{a_{n-2}}\subset A$ of $a_1,\dots,a_{n-2}$; we denote $R_{a_i,\alpha}=U_{a_i}\cap R_{A,\alpha}$, which is homeomorphic to $[0,1]\times(0,1)$. We let the image of the relative fundamental class of $I$ along $\sigma_{a_i}$ be the preferred generator of $H_1(R_{a_i,\alpha},\del R_{a_i,\alpha})\cong\Z$, and similarly we let the image of the fundamental class of $I$ along $\sigma_c$ be the preferred generator of $H_1(R_{c,\beta},\del R_{c,\beta})$.
\end{notation}
Note that also $H_1(R_{A,\alpha},\del R_{A,\alpha})$ is canonically identified with $\Z$, since each inclusion $(R_{a_i,\alpha},\del R_{a_i,\alpha})\subset (R_{A,\alpha},\del R_{A,\alpha})$ is a homotopy equivalence (and induces the same choice of generator on first homology).
\begin{definition}
For $i=n-1,n$ we define $\scX'_i\subset\scX_i$ as the open subspace of configurations $(z_1,\dots,z_{n-2},z_{2n-1-i})$ such that $z_j\in R_{a_j,\alpha}$ for all $1\le j\le n-2$, and such that $z_{2n-1-i}\in R_{c,\beta}$. We let $\scY'_i=\scX'_i\cap \scY_i$.
\end{definition}
We have a homeomorphism of couples
\[
(\scX'_i,\scY'_i)\cong (R_{a_1,\alpha},\del R_{a_1,\alpha})\times\dots\times (R_{a_{n-2},\alpha},\del R_{a_{n-2},\alpha})\times (R_{c,\beta},\del R_{c,\beta}).
\]

Recall that, for $i=n-1,n$, the homology class $t_i\in H_{n-1}(\scX_i,\scY_i)$ is the image of the relative fundamental class of $I^{\set{1,\dots,n}\setminus\set{i}}$ along the map $\sigma_{a_1}\times\dots\times\sigma_{a_{n-2}}\times\sigma_c$ (for $i=n$, the last factor being $\bar\sigma_c$); the image of the latter product map is however contained in $\scX'_i$, and thus we can define the homology class $t'_i\in H_{n-1}(\scX'_i,\scY'_i)$ in the same way, so that $t'_i\mapsto t_i$ along the map induced in homology by the inclusion of couples $(\scX'_i,\scY'_i)\subset(\scX_i,\scY_i)$. We can thus define $r'_i\in H^{n-1}(\scX'_i,\scY'_i)$ as the restriction of $\Phi^*(r_i)$, and we have an equality
\[
\left<t_i,\Phi^{*}(r_i)\right>_{\scX_i\mathrm{rel.}\scY_i}=\left<t'_i,r'_i\right>_{\scX'_i\mathrm{rel.}\scY'_i}.
\]
Moreover, the previous discussion shows the equality $t'_i=\sigma_{a_1}[I,\del I]\times\dots\times\sigma_{a_{n-2}}[I,\del I]\times \sigma_c[I,\del I]$ (for $i=n$ replace $\sigma_c$ by $\bar\sigma_c$, which accounts on a change of sign); since $t'_i$ splits as a homology cross product of first homology classes, it is worth to study more closely the class $r'_i$, i.e. the intersection of the support of $r_i$ with $\scX'_i$.

\subsection{A combinatorial formula}
\begin{definition}
\label{defn:fjgj}
We let $f_1,\dots,f_\mu\subset \me_{\alpha}=\Phi^{-1}(\md_{\alpha})$ be the sequence of segments in which $\me_{\alpha}$ intersects $R_{A,\alpha}\cup R_{c,\beta}$: each segment inherits an orientation from $\me_{\alpha}$, and the segments are listed in the order in which they appear along $\me_{\alpha}$.
Similarly, we define $g_1,\dots,g_\nu\subset\me_{\alpha\beta}=\Phi^{-1}(\md_{\alpha\beta})$ as the sequence of segments in the intersection $\me_{\alpha\beta}\cap (R_{A,\alpha}\cup R_{c,\beta})$.

We define $\theta(f_i)\in\set{\ba,\bb}$ so that $f_i\subset R_{A,\alpha}$ if $\theta(f_i)=\ba$, and $f_i\subset R_{c,\beta}$ if $\theta(f_i)=\bb$; similarly we define $\theta(g_j)$.

For all $i$ such that $\theta(f_i)=\ba$, we define $\epsilon(f_i)\in\set{\pm1}$ so that $f_i$ represents the cohomology class $\epsilon(f_i)$ in $H^1(R_{A,\alpha},\del R_{A,\alpha})\cong \Z$, where the last identification is dual to the one from Notation \ref{nota:Raialpha}.
Similarly, for all $i$ such that $\theta(f_i)=\bb$, we define $\epsilon(f_i)\in\set{\pm1}$ by considering $H^1(R_{c,\beta},\del R_{c,\beta})\cong \Z$. And analogously, we define $\epsilon(g_j)\in\set{\pm1}$ for all $j$.
\end{definition}
We first focus on the case $i=n$: the intersection of $\Phi^{-1}(W_{n-1})$ with $\scX'_n$ splits as a disjoint union of products of segments: the disjoint union is parametrised by the set of all sequences $(i_1,\dots,i_{n-1})$ of indices $1\le i_j\le \mu$ satisfying the following:
\begin{itemize}
    \item $i_1\le \dots\le i_{n-1}$, and each equality $i_j=i_{j+1}$ implies $\epsilon(f_{i_j})=+1$;
    \item $\theta(f_{i_j})=\ba$ for $1\le j\le n-2$;
    \item $\theta(f_{i_{n-1}})=\bb$.
\end{itemize}
The product of segments corresponding to 
$(i_1,\dots,i_{n-1})$ is
\[
(f_{i_1}\cap R_{a_1,\alpha})\times\dots\times (f_{i_{n-2}}\cap R_{a_{n-2},\alpha})\times (f_{i_{n-1}}\cap R_{c,\beta}).
\]
The previous product, seen as a proper $(n-1)$-submanifold of $\scX'_n$ which is disjoint from $\scY'_n$, supports a cohomology class whose Kronecker pairing with $t'_n$ is
\begin{equation}
    \epsilon(f_{i_1})\cdot\dots\cdot\epsilon(f_{i_{n-2}})\cdot (-\epsilon(f_{i_{n-1}}))=\pm 1\in\Z,
\end{equation}
where the minus sign in the last factor is due to the fact that $t'_n$ is described in terms of $\bar\sigma_c$ rather than $\sigma_c$. Summing over all choices of $(i_1,\dots,i_{n-1})$, we can in principle compute $\left<t'_n,r'_n\right>$.

Let us now consider the case $i=n-1$: the intersection of $\Phi^{-1}(W_{n-2}\times\md_{\alpha\beta})$ with $\scX'_{n-1}$ splits as a disjoint union of products of segments: the disjoint union is parametrised by the set of all sequences $(i_1,\dots,i_{n-2};l)$ of indices $1\le i_j\le \mu$ and $1\le l\le \nu$ satisfying the following:
\begin{itemize}
    \item $i_1\le \dots\le i_{n-2}$, and each equality $i_j=i_{j+1}$ implies $\epsilon(f_{i_j})=+1$;
    \item $\theta(f_{i_j})=\ba$ for $1\le j\le n-2$;
    \item $\theta(g_l)=\bb$.
\end{itemize}
The product of segments corresponding to 
$(i_1,\dots,i_{n-2};l)$ is
\[
(f_{i_1}\cap R_{a_1,\alpha})\times\dots\times (f_{i_{n-2}}\cap R_{a_{n-2},\alpha})\times (g_l\cap R_{c,\beta}).
\]
The previous product, seen as a proper $(n-1)$-submanifold of $\scX'_{n-1}$ which is disjoint from $\scY'_{n-1}$, supports a cohomology class whose Kronecker pairing with $t'_{n-1}$ is
\[
\epsilon(f_{i_1})\cdot\dots\cdot\epsilon(f_{i_{n-2}})\cdot \epsilon(g_l)=\pm 1\in\Z.
\]
Summing over all choices of $(i_1,\dots,i_{n-2};l)$, we can in principle compute $\left<t'_{n-1},r'_{n-1}\right>$. Note however that the conditions on $(i_1,\dots,i_{n-2};l)$ split as a list of conditions only on $(i_1,\dots,i_{n-2})$ and one condition only on $l$, and the sum computing $\left<t'_{n-1},r'_{n-1}\right>$ is thus equal to the product of the following two sums:
\begin{itemize}
    \item the sum $\sum_{(i_1,\dots,i_{n-2})}\epsilon(f_{i_1})\cdot\dots\cdot\epsilon(f_{i_{n-2}})$, for $(i_1,\dots,i_{n-2})$ satisfying the conditions in the previous list;
    \item the sum
    $\sum_{l\colon \theta(g_l)=\beta}\epsilon (g_l)$.
\end{itemize}

\subsection{Reinterpretation in terms of contents}

It will be helpful for computations to record the above findings in the context of words in free monoids and groups.

\begin{definition}
We denote by $\Mon\langle \ba^{\pm1}, \bb^{\pm1}\rangle$ the free monoid in the letters $\ba$, $\ba^{-1}$, $\bb$ and $\bb^{-1}$; it contains unreduced words $\uw=(w_1^{\epsilon_1},w_2^{\epsilon_2},\dots, w_h^{\epsilon_h})$, where $h\ge0$, and for all $1\le i\le h$ we have $w_i\in\set{\ba,\bb}$ and $\epsilon_i\in\set{\pm1}$; composition is given by concatenation. We denote by $\bF\langle\ba,\bb\rangle$ the free group generated by the letters $\ba,\bb$. We denote by $\iota\colon \Mon\langle \ba^{\pm1},\bb^{\pm1}\rangle \to\bF\langle\ba,\bb\rangle$ the surjective monoid homomorphism sending $(w^{\epsilon})\mapsto w^{\epsilon}$.
\end{definition}

\begin{definition}[Contents]
\label{defn:contents}
Fix a word 
 $\uw=(w_1^{\epsilon_1},w_2^{\epsilon_2},\dots, w_h^{\epsilon_h})\in \Mon\langle \ba, \bb\rangle$. For $k\ge 0$, an $\ba^k\bb$\textit{-occurence in $\uw$} is a sequence $s=(i_1,\dots,i_{k+1})$ of indices $1\le i_j\le h$ satisfying
 \begin{enumerate}
     \item $i_1\le\dots\le i_{k+1}$ and equality $i_j=i_{j+1}$ implies $\epsilon_{i_j}=+1$;
     \item $w_{i_j}=\ba$ for $1\le j\le k$;
     \item $w_{i_{k+1}}=\bb$.
 \end{enumerate}
The \textit{sign of $s$} is the product $\sigma(s)=\prod_{1\le j\le k+1} \epsilon_{i_j}$ and the $\ba^k\bb$\textit{-content of $\uw$} is the signed count 
\[
\cont(\ba^k\bb;\uw):=\sum_{\ba^k\bb \text{-occurences } s \text{ in }\uw} \sigma(s).
\]

Analogously, for $k\ge 0$, an $\ba^k$\textit{-occurence in $\uw$} is a sequence $s=(i_1,\dots,i_k)$ satisfying conditions $(1)$ and $(2)$ above. The \textit{sign} $\sigma(s)$ of an occurrence $s$, and the $\ba^k$\textit{-content of $\uw$}, denoted $\cont(\ba^k,\uw)$, are analogously defined.
\end{definition}
Observe $\cont(\ba^0,\uw)=1$ for all words $\uw$, since there exists precisely one occurrence $s=()$ of $\ba^0$ in $\uw$, carrying sign $1$.
In the light of Definition \ref{defn:contents}, the previous discussion translates into the following proposition.
\begin{proposition}\label{prop:reductiontocontents}
We have equalities
\[
\langle t_n',r_n'\rangle =-\cont\big(\ba^{n-2}\bb\ ,\ (\theta(f_1)^{\epsilon(f_1)},\dots,\theta(f_\mu)^{\epsilon(f_\mu)})\big);
\]
\[
\langle t_{n-1}',r_{n-1}'\rangle=\cont\big(\ba^{n-2}\ ,\ (\theta(f_1)^{\epsilon(f_1)},\dots,\theta(f_\mu)^{\epsilon(f_\mu)})\big)\cdot \big(\theta(g_1)^{\epsilon(g_1)},\dots,\theta(g_\nu)^{\epsilon(g_\nu)}\big)_{\bb},
\]
where $(-)_{\bb}:\Mon\langle \ba^{\pm1},\bb^{\pm1}\rangle \to \Z$ is the
composition of $\iota\colon \Mon\langle \ba^{\pm1},\bb^{\pm1}\rangle\to \bF\langle \ba^{\pm1},\bb^{\pm1}\rangle$ and the group homomorphism $\bF\langle \ba^{\pm1},\bb^{\pm1}\rangle\to\Z$ sending
$\alpha\mapsto 0$ and $\beta\mapsto 1$. 
\end{proposition}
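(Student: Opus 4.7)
The plan is to directly match the combinatorial sums computed in the preceding subsection with Definition \ref{defn:contents}. The hard work of enumerating transverse intersection points of $t_i'$ and $r_i'$ (represented as products of segments) has already been carried out; what remains is bookkeeping, verifying that the conditions on index sequences match those defining $\ba^k\bb$- and $\ba^k$-occurrences, and that the sign conventions agree.

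For the first equality, I would recall the formula expressing $\langle t_n',r_n'\rangle$ as a sum over sequences $(i_1,\dots,i_{n-1})$ satisfying the three bullets: $i_1\le\dots\le i_{n-1}$ with $i_j=i_{j+1}\Rightarrow \epsilon(f_{i_j})=+1$; $\theta(f_{i_j})=\ba$ for $1\le j\le n-2$; and $\theta(f_{i_{n-1}})=\bb$. These are exactly the defining conditions of an $\ba^{n-2}\bb$-occurrence in the word $\uw_f := (\theta(f_1)^{\epsilon(f_1)},\dots,\theta(f_\mu)^{\epsilon(f_\mu)})$, and each such occurrence contributes $\epsilon(f_{i_1})\cdots\epsilon(f_{i_{n-2}})\cdot \epsilon(f_{i_{n-1}})$, which equals $\sigma(s)$. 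The single minus sign multiplying each summand (arising from the orientation-reversal in $\bar\sigma_c$ defining $t_n'$ rather than $\sigma_c$) factors out of the sum, yielding $-\cont(\ba^{n-2}\bb,\uw_f)$.

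For the second equality, I would note that the conditions on $(i_1,\dots,i_{n-2};l)$ split as separate conditions on $(i_1,\dots,i_{n-2})$ and on $l$, so the sum computing $\langle t_{n-1}',r_{n-1}'\rangle$ factors as the product of $\sum_{(i_1,\dots,i_{n-2})}\epsilon(f_{i_1})\cdots\epsilon(f_{i_{n-2}})$ and $\sum_{l:\theta(g_l)=\bb}\epsilon(g_l)$. The first factor is $\cont(\ba^{n-2},\uw_f)$ by the same identification as above (now with no final $\bb$, hence no extra sign). For the second factor, I would unwind the definition of the map $(-)_\bb$: the abelianisation homomorphism $\bF\langle \ba,\bb\rangle\to\Z$ sending $\ba\mapsto 0$, $\bb\mapsto 1$ takes a reduced word to the signed count of its $\bb$-letters, and precomposing with $\iota$ extends this to unreduced words, giving $(\uw_g)_\bb=\sum_{l:\theta(g_l)=\bb}\epsilon(g_l)$, as required.

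The main potential pitfall is the sign: one must carefully track that the single overall minus sign appearing in the first formula originates from $\bar\sigma_c$ in the definition of $t_n'$ (not $t_{n-1}'$) and is not duplicated or shifted in the process of rewriting. No further geometric content is needed; once the matching is made, the proposition follows.
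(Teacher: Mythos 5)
Your proposal is correct and follows exactly the route the paper takes: the proposition is obtained by matching the sums over index sequences $(i_1,\dots,i_{n-1})$ and $(i_1,\dots,i_{n-2};l)$ from the preceding combinatorial discussion with the definition of $\ba^{n-2}\bb$- and $\ba^{n-2}$-occurrences, factoring the second sum, and interpreting $\sum_{l:\theta(g_l)=\bb}\epsilon(g_l)$ as $(-)_{\bb}$. Your tracking of the single minus sign to the use of $\bar\sigma_c$ in $t_n'$ (and its absence in $t_{n-1}'$) also agrees with the paper.
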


In the next section we will use the previous results to prove that $\langle t_n',r_n'\rangle$ is equal to $1$, whereas $\langle t_{n-1}',r_{n-1}'\rangle$ vanishes because $\big(\theta(g_1)^{\epsilon(g_1)},\dots,\theta(g_\nu)^{\epsilon(g_\nu)}\big)_{\bb}$ vanishes.
\section{After Fog}\label{sec:afterfog}

In this section we finally compute 
$\langle t'_i ,r'_i\rangle$ for $i=n-1,n$, by computing the contents from Proposition \ref{prop:reductiontocontents}.
\begin{definition}
\label{defn:cFplus}
Recall Subsection \ref{subsec:list} and Notation \ref{nota:FR}. We consider the subspace $\cF^R\cup (d\cap \bar\cF) \cup\delout\cF\subset\msurf$, and define
\[
\cF_+:=\pa{\cF^R\cup (d\cap \bar\cF) \cup\delout\cF}/\delout\cF
\]
as the quotient of the space $\cF^R\cup (d\cap \bar\cF) \cup\delout\cF$ by $\delout\cF$. See Figure \ref{fig:cFplus}.

We denote by $\Phi_+\colon\cF_+\to\cF_+$ the homeomorphism induced on the quotient by the restriction of $\Phi$ on $\cF^R\cup (d\cap \bar\cF) \cup\delout\cF$.
\end{definition}
\begin{figure}
  \centering  
  \begin{tikzpicture}[xscale=.8,yscale=.8, decoration={markings, mark=at position 0.74 with {\arrow{>}}}]

\begin{scope}
\clip (0.2,3.3) rectangle (7.5,-3.3);
\draw[looseness=1,thick](.3,0) to[out=90, in=90] ++(10.9,0) to[out=-90,in=-90] ++(-10.9,0);
\end{scope}
\begin{scope}
\clip (14.8,3.3) rectangle (7.5,-3.3);
\draw[looseness=1,thick](14.7,0) to[out=90, in=90] ++(-10.9,0) to[out=-90,in=-90] ++(10.9,0);
\end{scope}

\fill[looseness=1, yellow!70!white] (.3,0) to[out=90, in=90] ++(10.9,0) to[out=-90,in=-90] ++(-10.9,0)    to[out=0,in=180] ++(.8,0)  to[out=-90, in=-90] ++(9.2,0) to[out=90,in=90] ++(-9.2,0)    to[out=180,in=0] ++(-.8,0);
\begin{scope}[shift={(15,0)},xscale=-1]
    \fill[looseness=1, yellow!70!white] (.3,0) to[out=90, in=90] ++(10.9,0) to[out=-90,in=-90] ++(-10.9,0)    to[out=0,in=180] ++(.8,0)  to[out=-90, in=-90] ++(9.2,0) to[out=90,in=90] ++(-9.2,0)    to[out=180,in=0] ++(-.8,0);
\end{scope}

\begin{scope}
\clip[looseness=1] (14.75,0) to[out=-90, in=-90] ++(-11.0,0) to[out=0,in=180] ++(1.0,0)  to[out=-90, in=-90] ++(9.1,0) to[out=0,in=180] ++(.9,0);
\fill[red!70!white, opacity=.6, looseness=.9] (1.5,0) to[out=90, in=90] ++(5.1,0) to[out=-90,in=-90] ++(-5.1,0) node[left]{\tiny$A$} to[out=0,in=180] ++(1.6,0) to[out=-90,in=-90] ++(2.4,0) to[out=90,in=90] ++(-2.4,0) to [out=180,in=0] ++(-1.6,0); 
\draw[looseness=.9, postaction={decorate}] (1.75,0) node[below]{\tiny$a_1$}  to[out=90, in=90]  ++(4.6,0)  to[out=-90,in=-90] ++(-4.6,0);
\draw[looseness=.85, postaction={decorate}] (2.1,0) node[below]{\tiny$a_2$} node[right]{...} to[out=90, in=90]  ++(4.1,0) to[out=-90,in=-90] ++(-4.1,0);
\draw[looseness=.9, postaction={decorate}] (2.8,0) node[below]{\tiny$a_{n\!-\!2}$}  to[out=90, in=90]  ++(3.0,0) to[out=-90,in=-90] ++(-3.0,0);
\draw[looseness=1, magenta, postaction={decorate}] (13,-2.5) to[out=90, in=-90] (13,-1.6);
\draw[looseness=1, magenta, postaction={decorate}] (7.5,-1.4) to[out=-90, in=90] (7.5,-3.5);
\end{scope}

\begin{scope}
\clip[looseness=1] (.25,0) to[out=-90, in=-90] ++(11.0,0) to[out=180,in=0] ++(-1.0,0) to[out=-90,in=-90] ++(-9.1,0) to[out=180,in=0] ++(-.9,0);
 \fill[looseness=1.1, red!80!white, opacity=.5] (9.3,0) to[out=90, in=90] ++(3.1,0) to[out=-90,in=-90] ++(-3.1,0) to[out=0,in=180] ++(.4,0) to[out=-90,in=-90] ++(2.3,0) to[out=90,in=90] ++(-2.3,0) to[out=180,in=0] ++(-.4,0);
\draw[looseness=1.1, postaction={decorate}] (12.2,0)  node[left]{\tiny $c$}  to[out=90, in=90] ++(-2.7,0) to[out=-90,in=-90] ++(2.7,0);
\end{scope}

\node at (4,-1.5) {\tiny$R_{A,\alpha}$};
\node at (9.8,-.7) {\tiny$R_{c,\beta}$};
\node[magenta] at (12.8,-2.1) {\tiny$d_\alpha$};
\node[magenta] at (7.4,-2.7) {\tiny$d_{\alpha\beta}$};

\node at (7.8,-3.3) {\tiny$p_0$}; \node at (7.5,3.2) {\tiny$p_0$};
\node at (13,-1.7) {\tiny$\bullet$}; \node at (12.9,-1.5) {\tiny$p_1$};
\node at (7.5,-2.5) {\tiny$\bullet$}; \node at (7.5,-2.3) {\tiny$p_2$};

\draw[looseness=1, magenta, postaction={decorate}] (7.5,-3.04) to[out=160, in=-90] node[right]{\tiny $\ba$} (4,0) to[out=90, in=-160] (7.5,3.04);
\draw[looseness=1, magenta, postaction={decorate}] (7.5,3.04) to[out=-20, in=90] node[left]{\tiny $\bb$} (11,0) to[out=-90, in=20] (7.5,-3.04);
  \end{tikzpicture}
  \caption{The surface $\cF_+$.}
    \label{fig:cFplus}
\end{figure}
We notice that the quotient $\bar\cF/\delout\cF$ is a compact surface of genus 0 with 3 boundary curves; the space $\cF_+$ is obtained from the latter by removing parts of the boundary, leaving only the following portions:
\begin{itemize}
    \item $\del R_{A,\alpha}$ and $\del R_{c,\beta}$;
    \item the unique point in $\del d_{\alpha}\setminus\delout\cF$, which we call $p_2$.
    \item the unique point in $\del d_{\alpha\beta}\setminus\delout\cF$, which we call $p_1$.
\end{itemize}
We moreover call $p_0\in \cF_+$ the quotient point of $\delout\cF$.
We consider the fundamental groupoid $\Pi_1(\cF_+;p_0,p_1,p_2)$, i.e. the category whose three objects are $p_0,p_1,p_2$ and whose morphisms from $p_i$ to $p_j$ are the homotopy classes of paths from $p_i$ to $p_j$.
In particular, all morphisms of this groupoid are generated by the following morphisms (and their inverses):
\begin{itemize}
    \item the paths $d_\alpha\colon p_0\to p_1$ and $d_{\alpha\beta}\colon p_2\to p_0$;
    \item the loops $\ba,\bb\colon p_0\to p_0$ from Figure \ref{fig:cFplus}: $\ba$ is contained in the closure of $U_\alpha$ in $\cF_+$ and intersects all segments $a_i\cap R_{A,\alpha}$ once, transversely and from right;
    $\bb$ is contained in the closure of $U_\beta$ in $\cF_+$ and intersects the segment $c\cap R_{c,\beta}$ once, transversely and from right.
\end{itemize}
We compose morphisms in $\Pi_1(\cF_+;p_0,p_1,p_2)$ according to the convention for which the following holds:
$\ba d_\alpha$ is a defined morphism $p_0\to p_1$, whereas the composition $d_\alpha\ba$ is not defined.

Note that $\pi_1(\cF_+,p_0)\cong \bF\langle\ba,\bb\rangle$ is free on the generators $\ba$ and $\bb$. Moreover, the set of morphisms in $\Pi_1(\cF_+;p_0,p_1,p_2)$ from $p_0$ to $p_1$ is $\bF\langle\ba,\bb\rangle d_\alpha$, and the set of morphisms from $p_2$ to $p_0$ is $d_{\alpha\beta}\bF\langle\ba,\bb\rangle$.

Since the homeomorphism $\Phi_+\colon\cF_+\to\cF_+$ fixes the points $p_0,p_1,p_2$, it acts on $\Pi_1(\cF_+;p_0,p_1,p_2)$. The images of $d_\alpha$ and $d_{\alpha\beta}$ along $\Phi_+^{-1}$ are the classes of the paths $e_{\alpha}\colon p_0\to p_1$ and $e_{\alpha\beta}\colon p_2\to p_0$.
By Definition \ref{defn:fjgj} we have equalities of morphisms in $\Pi_1(\cF_+;p_0,p_1,p_2)$
\[
e_{\alpha}=\theta(f_1)^{\epsilon(f_1)}\dots\theta(f_\mu)^{\epsilon(f_\mu)} d_\alpha \colon p_0\to p_1;
\]
\[
e_{\alpha\beta}=d_{\alpha\beta} \theta(g_1)^{\epsilon(g_1)}\dots\theta(g_\nu)^{\epsilon(g_\nu)} \colon p_2\to p_0.
\]
We remind that, in the previous expression, the words $\theta(f_1)^{\epsilon(f_1)}\dots\theta(f_\mu)^{\epsilon(f_\mu)}$ and $\theta(g_1)^{\epsilon(g_1)}\dots\theta(g_\nu)^{\epsilon(g_\nu)}$, representing elements in $\bF\langle\ba,\bb \rangle$, may not be reduced.

In the following we compute $e_{\alpha}$ and $e_{\alpha\beta}$ as morphisms in $\Pi_1(\cF_+;p_0,p_1,p_2)$, using that $\Phi^*$ is isotopic, as a homeomorphism of $\cF_+$, to the iterated commutator of Dehn twists
$[D_{\alpha},[D_{\alpha},[\dots[D_{\alpha},D_{\beta}]]]\dots]$, where $D_{\alpha}$ and $D_\beta$ are the Dehn twists along the curves $\alpha$ and $\beta$ in $\cF_+$, and $D_{\alpha}$ is repeated $n-2$ times.



A direct computation gives the following lemma.
\begin{lemma}\label{lem:actionofDehntwists}
The action of the Dehn twists $D_{\alpha}$, $D_{\beta}$ on $\Pi_1(\cF_+;p_0,p_1,p_2)$ is given on the generating morphisms by
  \begin{align*}
         D_{\alpha}*\ba &=\ba, & D_{\beta}*\ba &=\bb\ba\bb^{-1},\\
         D_{\alpha}*\bb&=\ba\bb\ba^{-1}, &  D_{\beta}*\bb &=\bb,\\
         D_{\alpha}*d_{\alpha}&=\ba d_{\alpha}, &  D_{\beta}*d_{\alpha}&=d_{\alpha},\\
         D_{\alpha}*d_{\alpha\beta}&=d_{\alpha\beta}\ba^{-1}, &  D_{\beta}*d_{\alpha\beta}&=d_{\alpha\beta}\bb^{-1}.
     \end{align*}
In particular $D_{\alpha}$ and $D_{\beta}$ act on $\pi_1(\cF_+;p_0)=\langle \ba, \bb\rangle$ by the conjugations $\uw\mapsto \ba \uw\ba^{-1}$ and $\uw\mapsto \bb \uw\bb^{-1}$ respectively.
\end{lemma}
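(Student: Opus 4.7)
The proof is a direct geometric calculation, and the plan is to apply the standard formula for how a Dehn twist $D_\delta$ along an oriented simple closed curve $\delta\subset \cF_+$ acts on homotopy classes of paths: if $\gamma$ is a path transverse to $\delta$, then $D_\delta*\gamma$ is obtained from $\gamma$ by inserting, at each transverse intersection point, a loop that encircles $\delta$, with a sign dictated by the crossing sign and based-point convention. Once paths are based at (or translated back to) $p_0$, these local insertions become algebraic expressions in the generators $\ba$ and $\bb$, which by construction are exactly based push-offs of $\alpha$ and $\beta$.

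First I would fix the orientations of $\alpha,\beta$ (and thus of $\ba,\bb$) so as to match the intersection-sign convention of Subsection \ref{subsec:list}, and then verify each of the eight formulas by reading off the (signed, transverse) intersection with the relevant Dehn-twist curve. The inputs required are already assembled in Subsection \ref{subsec:list} and Figure \ref{fig:cFplus}: $\ba$ can be isotoped to lie in $\bar U_\alpha$ so that $\ba\cap \alpha=\emptyset$, giving $D_\alpha*\ba=\ba$, and symmetrically $D_\beta*\bb=\bb$. Although $\alpha\cap\beta$ has two points in $\surf$, the loop $\ba$ in $\cF_+$ (a half of $\alpha$ connecting back to the collapsed basepoint $p_0$) meets $\beta$ transversely exactly once, from the right; this produces the conjugation $D_\beta*\ba=\bb \ba\bb^{-1}$, and symmetrically $D_\alpha*\bb=\ba\bb\ba^{-1}$.

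For the two arcs, by construction $d_\alpha\subset R_{d,\alpha}$ crosses $\alpha$ exactly once (at the endpoint $p_1$) and is disjoint from $\beta$, because $R_{d,\alpha}\cap\beta=\emptyset$; the single insertion near $p_1$ yields $D_\alpha*d_\alpha=\ba d_\alpha$, and $D_\beta*d_\alpha=d_\alpha$. The arc $d_{\alpha\beta}\subset R_{d,\alpha\beta}$ crosses each of $\alpha,\beta$ transversely once (at $p_2$ and at an interior point, respectively), so applying the formula at each crossing gives $D_\alpha*d_{\alpha\beta}=d_{\alpha\beta}\ba^{-1}$ and $D_\beta*d_{\alpha\beta}=d_{\alpha\beta}\bb^{-1}$; the inverses (and the placement of the inserted factor after $d_{\alpha\beta}$ rather than before) reflect that these crossings are traversed in the opposite direction to those for $d_\alpha$, and that $d_{\alpha\beta}$ ends at $p_0$. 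The final clause, that $D_\alpha,D_\beta$ act on $\pi_1(\cF_+,p_0)=\bF\langle\ba,\bb\rangle$ as inner automorphisms $\uw\mapsto \ba\uw\ba^{-1}$ and $\uw\mapsto \bb\uw\bb^{-1}$, is then an immediate corollary of the first four identities.

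The main (and only) obstacle is the bookkeeping of signs and the placement of inserted factors: one must consistently track (i) the orientation of $\alpha,\beta$, (ii) the sign of each crossing with respect to the convention of Subsection \ref{subsec:list}, and (iii) on which side of the inserted $\ba^{\pm1},\bb^{\pm1}$ the remaining piece of $\gamma$ lies, which depends on which direction from $p_0$ one travels before meeting the crossing. Once these are pinned down, each of the eight formulas reduces to a one-picture verification in a small neighborhood of the intersection point, and requires no further global information about $\cF_+$.
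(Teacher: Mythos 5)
Your overall strategy (the standard surgery formula for a Dehn twist acting on a transverse path, then rewriting the result in the generators $\ba,\bb$) is the same as the paper's, but the execution of the two crucial conjugation formulas rests on a false geometric premise. You claim that the loop $\ba$ meets $\beta$ transversely exactly once, and that this single crossing "produces" $D_\beta*\ba=\bb\ba\bb^{-1}$. In the paper's configuration this is impossible: $\alpha$ and $\beta$ have algebraic intersection number $0$ and geometric intersection number $2$, the curve $\beta$ crosses the annulus $\bar U_\alpha$ in two spanning arcs, and $\ba$ is a core-parallel loop in $\bar U_\alpha$, so it must cross $\beta$ in (at least) two points of opposite sign -- exactly as in the paper's Figure \ref{fig:Dehntwist}, where the surgery inserts \emph{two} copies of $\beta$. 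Moreover, a single transverse crossing could never yield a conjugation: one insertion changes the exponent sum of $\bb$ in the resulting word by $\pm1$, whereas $\bb\ba\bb^{-1}$ has the same abelianization as $\ba$. So your mechanism for $D_\beta*\ba$ (and symmetrically $D_\alpha*\bb$) is not a shortcut but an error; the real content of the lemma is precisely the bookkeeping you skip, namely transporting the two inserted copies of $\beta^{\pm1}$ back to $p_0$ along the relevant subarcs and checking that the resulting unreduced word simplifies to $\bb\ba\bb^{-1}$. The paper handles this with its system of barriers $b_{\ba},b_{\bb}$ (dual arcs cutting $\cF_+$ into a disc), reading the word from the signed barrier crossings ($+,+,-$ giving $\bb\ba\bb^{-1}$); some device of this kind, or an explicit homotopy, is needed in your argument.

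Two smaller points: the crossings of $d_\alpha$ with $\alpha$ and of $d_{\alpha\beta}$ with $\alpha,\beta$ cannot occur "at the endpoints $p_1$, $p_2$" as you state -- the twist homeomorphisms are supported in annuli around $\alpha,\beta$ and must fix $p_0,p_1,p_2$ for the action on $\Pi_1(\cF_+;p_0,p_1,p_2)$ to be defined, so these intersections are interior to the arcs (your intersection counts there are nevertheless correct). Also, even in the cases with a single crossing, the inserted loop based at the crossing point only becomes $\ba^{\pm1}$ or $\bb^{\pm1}$ (rather than a conjugate thereof) after one checks that the transporting subarc does not wind; asserting "one-picture verification" is acceptable there, but it is exactly the step your single-crossing claim gets wrong for $D_\beta*\ba$.
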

\begin{proof}
Let $\gamma\subset\cF_+$ be an immersed arc in $\cF_+$ representing a morphism $q\to q'$ in $\Pi_1(\cF_+;p_0,p_1,p_2)$, i.e. $q,q'\in\set{p_0,p_1,p_2}$, and let $\delta\subset\cF_+$ be either $\alpha$ or $\beta$; assume that $\gamma$ and $\delta$ are transverse. To compute the image of the morphism $\gamma\colon q\to q'$ along $D_{\delta}$, we orient $\gamma$ from $q$ to $q'$, and list the intersection points in $\gamma\cap \delta $ as $\{q_1,\dots,q_h\}$, so that, using the orientation of $\gamma$, we have $q<q_1<\dots<q_h<q'$. We then have that $D_{\delta}*\gamma\colon q\to q'$ is homotopic to the concatenation
$$D_{\delta}*\gamma\simeq\gamma':= \gamma|_{[q,q_1]}\delta_{q_1}\gamma|_{[q_1,q_2]}\delta_{q_2}\cdot\dots\cdot\delta_{q_h}\gamma|_{[q_h,q']}.$$
Here $\gamma|_{[\hat q\tilde q]}$ is the segment of $\gamma$ from $\hat q$ to $\tilde q$, and $\delta_{q_i}$ is the simple loop $\delta$ based at $q_i$, with orientation so that when arriving from $\gamma$ to $\delta$ we turn left.

We draw two \textit{barriers} $b_{\ba}$ and $b_{\bb}$ that are arcs with endpoints in $\partial \cF_+$ so that
\begin{itemize}
    \item[(i)] $ \cF_+-(b_{\ba}\cup b_{\bb})$ is simply connected;
    \item[(ii)] $b_{\ba}$ intersects $\ba$ transversely once, and is disjoint from $\bb$;
    \item[(iii)] $b_{\bb}$ intersects $\bb$ transversely once, and is disjoint from $\ba$. 
\end{itemize}

We first assume $q=q'=p_0$, i.e. that $\gamma$ is a loop based at $p_0$, and we further assume that $\gamma$ is transverse to the barriers and that no intersection point of $\gamma$ and $\delta$ lies on the barriers. As a consequence, also $\gamma'$ is transverse to the barriers: here, since $\gamma'$ may not be embedded, we consider it as a parametrised loop $[0,1]\to\cF_+$.
We may present the morphism $\gamma'\colon p_0\to p_0$ as a word in $\ba^{\pm1}$ and $\bb^{\pm1}$ with the following recipe. 
For each $\bd=\ba,\bb$, we say that an intersection time between $\gamma'$ and $b_{\bd}$ is an element $t\in[0,1]$ such that $\gamma'(t)\in b_{\bd}$. We say that
$t$ is positive if $\gamma'$ intersects $b_{\bd}$ in the same direction as $\bd$, and negative otherwise. We record the intersection times of $\gamma'$ with the barriers, taking them in the order that corresponds to the orientation of $\gamma'$; we obtain a sequence $w_1,\dots,w_k\in \{\ba,\bb\}$, capturing the labels of the barrier on which each intersection point lies, and a sequence of signs $\epsilon_1,\dots,\epsilon_k\in\{\pm 1\}$ corresponding to the sign of the intersections. We then have an equality
\[
\gamma'=w_1^{\epsilon_1}\dots w_k^{\epsilon_{k}}\in \pi_1(\cF_+;p_0).
\]
If $\gamma\colon q\to q'$ is not a loop, then we concatenate $\gamma'\colon q\to q'$ with a morphism $q'\to q$ (typically $d_{\alpha}^{-1}$ or $d_{\alpha\beta}^{-1}$) that avoids the barriers in order to obtain a loop based at $p_0$, perform the same reasoning and deconcatenate in the end.

In Figure \ref{fig:Dehntwist}, we exhibit by the above method that $D_{\beta}*\ba=\bb\ba\bb^{-1}$. The other cases are analogous.
\begin{figure}[h]
  \centering 
   \begin{tikzpicture}[xscale=.8,yscale=.8, decoration={markings, mark=at position 0.74 with {\arrow{>}}}]

\begin{scope}
\clip (0.2,3.3) rectangle (7.5,-3.3);
\draw[looseness=1,thick](.3,0) to[out=90, in=90] ++(10.9,0) to[out=-90,in=-90] ++(-10.9,0);
\end{scope}
\begin{scope}
\clip (14.8,3.3) rectangle (7.5,-3.3);
\draw[looseness=1,thick](14.7,0) to[out=90, in=90] ++(-10.9,0) to[out=-90,in=-90] ++(10.9,0);
\end{scope}

\fill[looseness=1, yellow!70!white] (0.3,0) to[out=90, in=90] ++(10.9,0) to[out=-90,in=-90] ++(-10.9,0)    to[out=0,in=180] ++(1.5,0)  to[out=-90, in=-90] ++(7.5,0) to[out=90,in=90] ++(-7.5,0)    to[out=180,in=0] ++(-1.5,0);
\begin{scope}[shift={(15,0)},xscale=-1]
    \fill[looseness=1, yellow!70!white] (0.3,0) to[out=90, in=90] ++(10.9,0) to[out=-90,in=-90] ++(-10.9,0)    to[out=0,in=180] ++(1.5,0)  to[out=-90, in=-90] ++(7.5,0) to[out=90,in=90] ++(-7.5,0)    to[out=180,in=0] ++(-1.5,0);
\end{scope}

\draw[looseness=1,blue,thick, dashed] (1,0)  to[out=90,in=90] (10.4,0) to[out=-90,in=-90] (1,0);
\draw[blue] (1.2,0) node[right]{$\beta$};

\draw[looseness=1, thick, postaction={decorate}] (7.5,-3.04) to[out=160, in=-90]  (4,0) node[right]{$\ba$} to[out=90, in=-160] (7.5,3.04);

\node at (7.8,-3.3) {$p_0$}; \node at (7.5,3.3) {$p_0$};

\filldraw[black] (6.6,-2.7) circle (2pt) node[above]{$q_1$};
\filldraw[black] (6.6,2.7) circle (2pt) node[above]{$q_2$};
\end{tikzpicture}
  
  \begin{tikzpicture}[xscale=.8,yscale=.8, decoration={markings, mark=at position 0.74 with {\arrow{>}}}]

\begin{scope}
\clip (0.2,3.3) rectangle (7.5,-3.3);
\draw[looseness=1,thick](.3,0) to[out=90, in=90] ++(10.9,0) to[out=-90,in=-90] ++(-10.9,0);
\end{scope}
\begin{scope}
\clip (14.8,3.3) rectangle (7.5,-3.3);
\draw[looseness=1,thick](14.7,0) to[out=90, in=90] ++(-10.9,0) to[out=-90,in=-90] ++(10.9,0);
\end{scope}

\fill[looseness=1, yellow!70!white] (0.3,0) to[out=90, in=90] ++(10.9,0) to[out=-90,in=-90] ++(-10.9,0)    to[out=0,in=180] ++(1.5,0)  to[out=-90, in=-90] ++(7.5,0) to[out=90,in=90] ++(-7.5,0)    to[out=180,in=0] ++(-1.5,0);
\draw[thick, red] (11.2,0) node[right]{$b_{\bb}$} to (9.3,0);
\draw[postaction={decorate},red] (11,0.2) to (11,-0.4);
\draw (10.9,-0.4);

\begin{scope}[shift={(15,0)},xscale=-1]
    \fill[looseness=1, yellow!70!white] (0.3,0) to[out=90, in=90] ++(10.9,0) to[out=-90,in=-90] ++(-10.9,0)    to[out=0,in=180] ++(1.5,0)  to[out=-90, in=-90] ++(7.5,0) to[out=90,in=90] ++(-7.5,0)    to[out=180,in=0] ++(-1.5,0);
    \draw[thick, red] (11.2,0) to (9.3,0) node[right]{$b_{\ba}$};
    \draw[postaction={decorate},red] (9.5,-0.2) to  (9.5,0.4);
\end{scope}

\draw[looseness=1,blue, thin, dashed] (1,0)  to[out=90,in=90] (10.4,0) to[out=-90,in=-90] (1,0);


\draw[looseness=1,thick]
(7.5,-3.04) to[out=160, in=5] ++(-0.6,0.15) to[out=185,in=-90] (0.65,0.1)
to[out=90,in=90] (10.05,0)
to[out=-90,in=-25] (7.5-1.8,-3.04+1.1)
to[out=155,in=-90] (4.4,0)
to[out=90,in=-155] (7.5-1.8,3.04-1.1)
to[out=25,in=0] ++(.2,0.5)
to[out=180,in=90] (1.3,0)
to[out=-90,in=-90] (10.75,0)
to[out=90,in=-20] (7.3,2.9)
to[out=160,in=-160] (7.5,3.04);

\draw (0.5,2.7) node[right]{$D_{\beta}*\ba$};

\draw[looseness=1,,thick,postaction={decorate}]
(10.05,0) to[out=-90,in=70] (9.95,-0.6) node[left]{\tiny$1$};
\draw[looseness=1,,thick,postaction={decorate}]
(10.75,0) to[out=90,in=-74] (10.67,0.6) node[right]{\tiny$3$};
\draw[looseness=1,,thick,postaction={decorate}]
(4.4,0) to[out=90,in=-108] (4.48,0.6) node[right]{\tiny$2$};

\node at (7.8,-3.3) {$p_0$}; \node at (7.5,3.3) {$p_0$};

  \end{tikzpicture}
  \caption{Action of the Dehn twist along $\beta$ (blue curve) on $\ba$ (black curve) . Since the curves intersect twice at $q_1$ and $q_2$, the loop $D_{\beta}*\ba$ (thick black curve, below) is homotopic to the surgery of $\ba$ with two copies of $\beta$. It intersects the (blue horizontal) barriers three times (labelled $1,2,3$), in the order $b_{\bb}$, $b_{\ba}$, $b_{\bb}$ with respective signs $+,+,-$, so $D_{\beta}*\ba=\bb\ba\bb^{-1}$.}
  \label{fig:Dehntwist}
\end{figure}
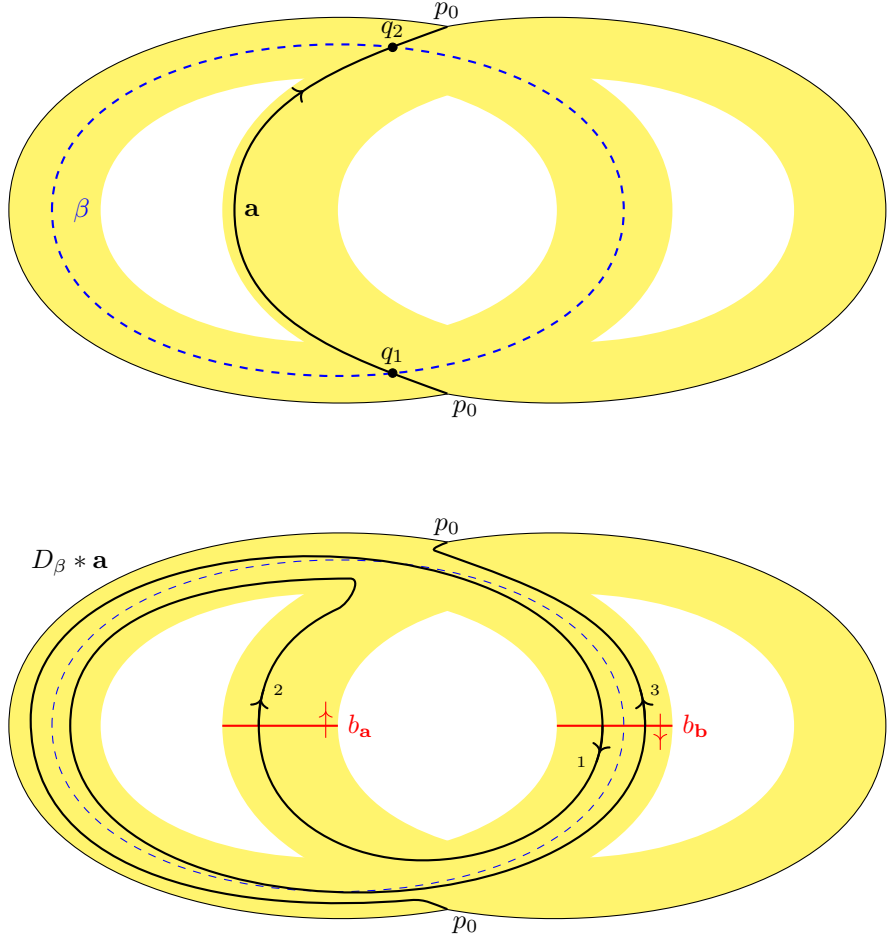
\end{proof}

\begin{corollary}\label{cor:formulasealphabeta}
The morphisms $e_{\alpha}$ and $e_{\alpha\beta}$ in $\Pi_1(\cF_+;p_0,p_1,p_2)$ are given by the following formulas
$$e_{\alpha}=[\ba,\dots[\ba,\bb]\dots]d_{\alpha};\quad\quad
e_{\alpha\beta}=d_{\alpha\beta}([\ba,\dots[\ba,\bb]\dots])^{-1},$$
where $\ba$ is repeated $n-2$ times.
\end{corollary}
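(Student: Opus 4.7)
Write $C_k := [D_\alpha, [D_\alpha, [\ldots, [D_\alpha, D_\beta]\ldots]]]$ for the iterated commutator with $k$ copies of $D_\alpha$, and $c_k := [\ba, [\ba, [\ldots, [\ba, \bb]\ldots]]] \in \pi_1(\cF_+, p_0)$ with $k$ copies of $\ba$. By construction of $\phi$ and the running remark that $\Phi$ is isotopic (rel.\ $\{p_0, p_1, p_2\}$) in $\cF_+$ to the relevant iterated commutator of Dehn twists, the induced action of $\Phi^{-1}$ on $\Pi_1(\cF_+; p_0, p_1, p_2)$ coincides with that of $C_{n-2}$. Thus the corollary is the case $k = n-2$ of the following strengthened claim, which I plan to prove by induction on $k \ge 1$:
\[
C_k * d_\alpha \;=\; c_k\, d_\alpha, \qquad C_k * d_{\alpha\beta} \;=\; d_{\alpha\beta}\, c_k^{-1}.
\]

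For the base case $k=1$, I apply the four factors of $[D_\alpha, D_\beta] = D_\alpha D_\beta D_\alpha^{-1} D_\beta^{-1}$ in sequence to $d_\alpha$, using Lemma~\ref{lem:actionofDehntwists} at each step: $D_\beta^{-1}$ fixes $d_\alpha$, then $D_\alpha^{-1}$ produces $\ba^{-1} d_\alpha$, then $D_\beta$ produces $\bb \ba^{-1} \bb^{-1}\, d_\alpha$, and finally $D_\alpha$ yields $\ba\bb\ba^{-1}\bb^{-1}\, d_\alpha = [\ba,\bb]\, d_\alpha$. An analogous four-step calculation shows $[D_\alpha, D_\beta] * d_{\alpha\beta} = d_{\alpha\beta}\,\bb\ba\bb^{-1}\ba^{-1} = d_{\alpha\beta}\,[\ba,\bb]^{-1}$, where the cancellation between the right-multiplication pieces from Lemma~\ref{lem:actionofDehntwists} and the conjugation acting on the tail of each intermediate word is the main bookkeeping point.

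For the inductive step, I first note that $C_k$ acts on $\pi_1(\cF_+, p_0)$ as conjugation by $c_k$: this follows immediately by induction from the conjugation formulas $D_\alpha * \uw = \ba \uw \ba^{-1}$ and $D_\beta * \uw = \bb \uw \bb^{-1}$ of Lemma~\ref{lem:actionofDehntwists}, together with $c_{k+1} = [\ba, c_k]$. Combined with the inductive hypothesis, this gives $C_k^{-1} * d_\alpha = c_k^{-1}\, d_\alpha$ and $C_k^{-1} * d_{\alpha\beta} = d_{\alpha\beta}\, c_k$. Expanding
\[
C_{k+1} * d_\alpha \;=\; D_\alpha \cdot C_k \cdot D_\alpha^{-1} \cdot C_k^{-1} * d_\alpha
\]
and processing the operators one at a time (using Lemma~\ref{lem:actionofDehntwists} on $d_\alpha$ and the conjugation-by-$c_k$ identity on loop factors), an internal $\ba^{-1}\ba$ cancels and the remaining expression reassembles into $\ba c_k \ba^{-1} c_k^{-1}\, d_\alpha = [\ba, c_k]\, d_\alpha = c_{k+1}\, d_\alpha$. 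The formally parallel calculation for $d_{\alpha\beta}$ produces $d_{\alpha\beta}\,[c_k,\ba] = d_{\alpha\beta}\, c_{k+1}^{-1}$.

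The only genuine difficulty will be disciplined bookkeeping of non-commuting words in $\bF\langle \ba, \bb \rangle$: every factor coming from Lemma~\ref{lem:actionofDehntwists} must be applied in the correct groupoid composition order, and one has to use repeatedly that conjugations by $\ba$ and $\bb$ commute with the formal products formed by $d_\alpha$ and $d_{\alpha\beta}$ only up to explicit cancellations. No further geometric input beyond Lemma~\ref{lem:actionofDehntwists} is required.
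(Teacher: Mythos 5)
Your argument is correct and follows essentially the same route as the paper: both proofs deduce everything from Lemma \ref{lem:actionofDehntwists} by pure bookkeeping in $\Pi_1(\cF_+;p_0,p_1,p_2)$, with your induction on the commutator depth (base case $[D_\alpha,D_\beta]$, inductive step using that $C_k$ acts on loops by conjugation by $c_k$) playing exactly the role of the paper's generic-word formula specialised to iterated commutators. One small point of care: with the paper's conventions it is $\Phi_+^{-1}$ (not $\Phi$) that is isotopic to $[D_\alpha,\dots[D_\alpha,D_\beta]\dots]$, since $\phi$ was defined as the inverse of the iterated commutator of bounding pairs; your computation of $C_{n-2}*d_\alpha$ and $C_{n-2}*d_{\alpha\beta}$ is precisely what is needed, so only the phrasing of your opening sentence should be adjusted.
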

\begin{proof}
Let $W(-,\star)$ be a word in the letters $-$ and $\star$, and consider $W(-,\star)$ as a rule to produce an element of a group by plugging into $-$ and $\star$ elements of that group. 
By Lemma \ref{lem:actionofDehntwists}, we have the equalities
\[
\begin{split}
D_{\alpha}*(W(\ba,\bb)d_{\alpha})&=\ba W(\ba,\bb)\ba^{-1}\ba d_{\alpha}=\ba W(\ba,\bb)d_{\alpha};\\
D_{\beta}*(W(\ba,\bb)d_{\alpha})&=\bb W(\ba,\bb)\bb^{-1}d_{\alpha}.
\end{split}
\]
Applying the above two rules we obtain, for another generic word $W'(-,\star)$, the equality
$$W'(D_{\alpha},D_{\beta})*(W(\ba,\bb)d_{\alpha})=W'(\ba,\bb)W(\ba,\bb)(W'(\ba,1))^{-1}d_{\alpha}.$$
If the word $W'$ is a commutator of two other words, $W'(\ba,1)=1$ in $\pi_1(\cF_+,p_0)$. Now recall that $\Phi^{-1}_+=[D_{\alpha},\dots[D_{\alpha},D_{\beta}]\dots]$ with $D_{\alpha}$ appearing $n-2$ times: we can apply the previous formula with the iterated commutator $W'(-,\star)=[-,[-,\dots[-,\star]\dots]]$, where $-$ occurs $n-2$ times, together with the trivial word $W(-,\star)=1$, to obtain
$$e_{\alpha}=\Phi^{-1}_+*d_{\alpha}=[D_{\alpha},\dots[D_{\alpha},D_{\beta}]\dots]*d_{\alpha}=[\ba,\dots[\ba,\bb]\dots]d_{\alpha},$$
where $\gamma_\alpha$ occurs $n-2$ times, as desired.

For $d_{\alpha\beta}$ a similar process yields
$$D_{\delta}*(d_{\alpha\beta}W(\ba,\bb))=d_{\alpha\beta}\gamma_{\delta}^{-1}\gamma_{\delta}W(\ba,\bb)\gamma_{\delta}^{-1}=d_{\alpha\beta}W(\ba,\bb)\gamma_{\delta}^{-1}$$
for $\delta=\alpha,\beta$, and consequently
$$W'(D_{\alpha},D_{\beta})*(d_{\alpha\beta}W(\ba,\bb))=d_{\alpha\beta}W(\gamma_\alpha,\gamma_\beta)(W'(\ba,\bb))^{-1},$$
from which the desired result again follows.
\end{proof}

It follows that the word $\theta(g_1)^{\epsilon(g_1)}\dots\theta(g_\nu)^{\epsilon(g_\nu)}$ is equal in $\pi_1(\cF_+,p_0)$ to the commutator $d_{\alpha\beta}^{-1}e_{\alpha\beta}=[\ba,[\ba,\dots[\ba,\bb]\dots]]^{-1}$, which, by virtue of being a commutator, lies in the kernel of the homomorphism $[-]_{\bb}$, that has codomain the abelian group $\Z$. We deduce
\begin{proposition}
$\langle t_{n-1}',r_{n-1}'\rangle=0$.
\end{proposition}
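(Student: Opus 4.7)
The plan is to combine Proposition \ref{prop:reductiontocontents} with the formula for $e_{\alpha\beta}$ obtained in Corollary \ref{cor:formulasealphabeta}. By Proposition \ref{prop:reductiontocontents}, the pairing $\langle t_{n-1}',r_{n-1}'\rangle$ factors as a product of two integers: a content term $\cont(\ba^{n-2};\uw)$ and a $(-)_{\bb}$-term evaluated on the word $(\theta(g_1)^{\epsilon(g_1)},\dots,\theta(g_\nu)^{\epsilon(g_\nu)})$. It therefore suffices to show that the second factor vanishes, and I would not bother to compute the first.

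First, I would recall the two parallel expressions for $e_{\alpha\beta}\in\Pi_1(\cF_+;p_0,p_1,p_2)$. From Definition \ref{defn:fjgj} (and the geometric identification of $e_{\alpha\beta}=\Phi^{-1}(d_{\alpha\beta})$ with a path crossing the barriers in the prescribed sequence) we have
\[
e_{\alpha\beta}=d_{\alpha\beta}\,\theta(g_1)^{\epsilon(g_1)}\dots\theta(g_\nu)^{\epsilon(g_\nu)}.
\]
On the other hand, Corollary \ref{cor:formulasealphabeta} gives
\[
e_{\alpha\beta}=d_{\alpha\beta}\,\bigl([\ba,[\ba,\dots[\ba,\bb]\dots]]\bigr)^{-1},
\]
with $\ba$ repeated $n-2$ times. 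Equating the two and cancelling $d_{\alpha\beta}$, I obtain that in $\pi_1(\cF_+,p_0)=\bF\langle\ba,\bb\rangle$ the word $\theta(g_1)^{\epsilon(g_1)}\dots\theta(g_\nu)^{\epsilon(g_\nu)}$ equals the inverse of the iterated commutator $[\ba,[\ba,\dots[\ba,\bb]\dots]]$.

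Finally, I would conclude by a one-line abelianization argument: the map $(-)_{\bb}$ factors as $\Mon\langle\ba^{\pm1},\bb^{\pm1}\rangle\xrightarrow{\iota}\bF\langle\ba,\bb\rangle\to\Z$, where the second arrow is a group homomorphism to an abelian target. Any commutator — and hence the inverse of any iterated commutator — lies in the kernel of such a homomorphism, so
\[
\bigl(\theta(g_1)^{\epsilon(g_1)},\dots,\theta(g_\nu)^{\epsilon(g_\nu)}\bigr)_{\bb}=0,
\]
and therefore $\langle t_{n-1}',r_{n-1}'\rangle=0$. There is no real obstacle here: all nontrivial input (the groupoid computation of $\Phi_+$ on $d_{\alpha\beta}$ via Lemma \ref{lem:actionofDehntwists}) has already been packaged into Corollary \ref{cor:formulasealphabeta}, and the final step is purely formal.
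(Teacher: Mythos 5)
Your argument is correct and is essentially the paper's own proof: both combine Proposition \ref{prop:reductiontocontents} with the identity $\theta(g_1)^{\epsilon(g_1)}\dots\theta(g_\nu)^{\epsilon(g_\nu)}=d_{\alpha\beta}^{-1}e_{\alpha\beta}=[\ba,[\ba,\dots[\ba,\bb]\dots]]^{-1}$ from Corollary \ref{cor:formulasealphabeta}, and then observe that a commutator dies under the homomorphism $(-)_{\bb}$ to the abelian group $\Z$. No substantive difference in route or content.
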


\subsection{The non-vanishing content}
It remains to compute
$$\langle t'_n,r'_n\rangle=-\cont\big(\ba^{n-2}\bb\ ,\ (\theta(f_1)^{\epsilon(f_1)},\dots,\theta(f_\mu)^{\epsilon(f_\mu)})\big)=-\cont\big(\ba^{n-2}\bb\ ,\ [\ba,\dots[\ba,\bb]\dots]\big),$$
where $\alpha$ appears $n-2$ times.
To do so effectively, we introduce the following algebra.

\begin{definition}
The \textit{non-commutative} algebra $R$ is defined by starting with the quotient
$$\Z\langle \bx,\by\rangle/(\by\bx, \by^2)$$
of the free associative algebra $\Z\langle \bx,\by\rangle$ by the two-sided ideal generated by $\by\bx$ and $\by^2$, and by taking the completion with respect to the two-sided ideal generated by $\bx$ and $\by$. In other words,
$$R=\{\bp(\bx)+\bq(\bx)\by \ :\  \bp(\bx),\bq(\bx)\text{ are formal power series in } \bx\},$$
and multiplication satisfies
$$(\bp(\bx)+\bq(\bx)\by)(\bp'(\bx)+\bq'(\bx)\by)=\bp(\bx)\bp'(\bx)+\big(\bp(\bx)\bq'(\bx)+\bq(\bx)\bp'(0)\big)\by,$$
where $\bp'(0)$ is the constant term of $\bp'(\bx)$.
\end{definition}

Write $R^{\ge 1}=\{\bp(\bx)+\bq(\bx)\by\in R\ :\  \bp(0)=0\}$.
\begin{lemma}\label{lem:inverses}
In the multiplicative monoid $1+R^{\ge 1}$,  elements of the form 
$$1+\bx\bp(\bx)\text{ and } 1+\bp(\bx)\by$$
have unique two-sided inverses
$$\frac{1}{1+\bx\bp(\bx)}=\sum_{k\ge 0}(-\bx\bp(\bx))^k\text{ and } 1-\bp(\bx)\by,$$
respectively.
\end{lemma}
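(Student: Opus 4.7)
The plan is to verify both inverse formulas by direct computation using the explicit multiplication rule of $R$, and then invoke the standard fact that two-sided inverses in a monoid are unique.

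For the first element, note that $1+\bx\bp(\bx)$ sits inside the subring $\Z[[\bx]]\subset R$ of elements of the form $\bp(\bx)+0\cdot\by$, and the multiplication rule of $R$ restricts there to ordinary power series multiplication, which is commutative. Since $\bx\bp(\bx)$ has zero constant term, the geometric sum $\sum_{k\ge 0}(-\bx\bp(\bx))^k$ converges in the $(\bx,\by)$-adic completion (the $k$\textsuperscript{th} term lies in $\bx^kR$), so it defines an element of $R$. A telescoping computation
\[
(1+\bx\bp(\bx))\sum_{k=0}^{N}(-\bx\bp(\bx))^k=1-(-\bx\bp(\bx))^{N+1}\to 1,
\]
and the symmetric one, confirm this is a two-sided inverse.

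For the second element, I would substitute $\bp_1(\bx)=\bp_1'(\bx)=1$, $\bq(\bx)=\bp(\bx)$, $\bq'(\bx)=-\bp(\bx)$ into the defining multiplication formula
\[
(\bp(\bx)+\bq(\bx)\by)(\bp'(\bx)+\bq'(\bx)\by)=\bp(\bx)\bp'(\bx)+\bigl(\bp(\bx)\bq'(\bx)+\bq(\bx)\bp'(0)\bigr)\by,
\]
and obtain
\[
(1+\bp(\bx)\by)(1-\bp(\bx)\by)=1+\bigl(-\bp(\bx)+\bp(\bx)\cdot 1\bigr)\by=1.
\]
Swapping the two factors gives the symmetric identity, again using that the constant term of $1$ is $1$ and that the $\by$-coefficient collapses. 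Thus $1-\bp(\bx)\by$ is a two-sided inverse of $1+\bp(\bx)\by$, and both elements live in $1+R^{\ge 1}$ because $-\bp(\bx)\by$ has no constant term.

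Uniqueness of inverses is automatic in any multiplicative monoid: if $u$ and $u'$ are both two-sided inverses of $v$, then $u=u(vu')=(uv)u'=u'$. I do not expect any genuine obstacle here; the only delicate point is bookkeeping with the asymmetric product formula (in particular the appearance of $\bp'(0)$ rather than $\bp'(\bx)$ in the $\by$-coefficient), but both verifications above use this rule in a uniform, one-line manner.
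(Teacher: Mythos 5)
Your argument is correct and follows essentially the same route as the paper's proof: verify the $\by$-type inverse by a one-line application of the multiplication rule in both orders, justify the geometric series for $1+\bx\bp(\bx)$ coefficient-wise (your telescoping remark is just a slightly more explicit version of the paper's observation that each coefficient is a finite sum), and conclude uniqueness from the standard monoid fact.
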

\begin{proof}
Two-sided inverses are unique in a monoid as long as they exist, so it suffices to provide some two-sided inverses for $1+\bx\bp(\bx)$ and $1+\bp(\bx)\by$. We have 
\[
(1+\bp(\bx)\by)(1-\bp(\bx)\by)=1=(1-\bp(\bx)\by)(1+\bp(\bx)\by),
\]
and for $1+\bx\bp(\bx)$ we merely observe that each coefficient of a power of $\bx$ in the sum $\sum_{k\ge 0} (-\bx\bp(\bx))^{k}$ can be computed by a finite sum.
\end{proof}

\begin{definition}
The total content of $\uw\in \Mon\langle \ba^{\pm1},  \bb^{\pm1}\rangle$ is  defined as
$$\Cont(\uw):=\sum_{k\ge 0} \cont(\ba^k, \uw)\bx^k+\cont(\ba^k\bb,\uw)\bx^k\by\in 1+R^{\ge 1}\subset R.$$
\end{definition}
It is a straightforward calculation that $\Cont(\ba^{-1})=1+\bx$ and $\Cont(\bb)=1+\by$.

\begin{lemma}
The total content is multiplicative, in the sense that, for $\uw,\uw'\in \Mon\langle \ba^{\pm1},  \bb^{\pm1}\rangle$, we have \begin{equation}\label{eq:totalcontentmult}
    \Cont(\uw\uw')=\Cont(\uw)\Cont(\uw').
\end{equation}
\end{lemma}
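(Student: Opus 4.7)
The plan is to verify equation \eqref{eq:totalcontentmult} by expanding both sides via the multiplication rule in $R$ and matching coefficients of $\bx^k$ and $\bx^k\by$ with the corresponding counts of occurrences. The key combinatorial observation is that any $\ba^k$-occurrence (resp.\ $\ba^k\bb$-occurrence) in the concatenated word $\uw\uw'$ is uniquely determined by how many of its indices fall in the $\uw$-part and how many in the $\uw'$-part; crucially, the condition $i_1\le\dots\le i_m$ together with the equality-implies-positive-sign condition splits perfectly across the concatenation, since every index of the $\uw$-portion is strictly smaller than every index of the $\uw'$-portion and no equality issue can arise across the split point.

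First I would write $\Cont(\uw)=\bp(\bx)+\bq(\bx)\by$ and $\Cont(\uw')=\bp'(\bx)+\bq'(\bx)\by$, where $\bp,\bp',\bq,\bq'$ are the generating series for the $\ba^k$-contents and $\ba^k\bb$-contents of $\uw$ and $\uw'$ respectively. Using the multiplication rule in $R$, the product $\Cont(\uw)\Cont(\uw')$ equals
\[
\bp(\bx)\bp'(\bx)+\bigl(\bp(\bx)\bq'(\bx)+\bq(\bx)\bp'(0)\bigr)\by,
\]
and noting that $\bp'(0)=\cont(\ba^0,\uw')=1$, the $\by$-coefficient simplifies to $\bp(\bx)\bq'(\bx)+\bq(\bx)$.

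Next I would identify each coefficient with a count of occurrences in $\uw\uw'$. For the $\bx^k$-coefficient: an $\ba^k$-occurrence $(i_1,\dots,i_k)$ in $\uw\uw'$ partitions according to the largest index $j$ with $i_j\le|\uw|$, giving an $\ba^j$-occurrence in $\uw$ and an $\ba^{k-j}$-occurrence in $\uw'$ (after re-indexing). Since signs are multiplicative on such a split, summing yields the convolution formula, which is exactly the $\bx^k$-coefficient of $\bp(\bx)\bp'(\bx)$. For the $\bx^k\by$-coefficient: an $\ba^k\bb$-occurrence in $\uw\uw'$ is determined by the position of its terminal $\bb$-index. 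If the $\bb$-index lies in $\uw$, then the entire occurrence lies in $\uw$ and contributes to $\cont(\ba^k\bb,\uw)$, which is the $\bx^k$-coefficient of $\bq(\bx)$. If the $\bb$-index lies in $\uw'$, then some prefix $\ba^i$ lies in $\uw$ and the suffix $\ba^{k-i}\bb$ lies in $\uw'$, producing the convolution piece, which is the $\bx^k$-coefficient of $\bp(\bx)\bq'(\bx)$.

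The only potential subtlety is that the partition of an occurrence across the concatenation preserves the ``equality implies $\epsilon=+1$'' condition: this is automatic since equality of two indices can only occur within $\uw$ or within $\uw'$, never across the boundary, so the local conditions in $\uw$ and $\uw'$ reassemble faithfully. No issue either for the sign, since $\sigma(s)$ is multiplicative over the partition. Thus the coefficient-by-coefficient comparison is immediate, and the proof reduces to book-keeping with no genuine obstacle.
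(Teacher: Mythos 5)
Your proposal is correct and follows essentially the same route as the paper: split each $\ba^k$- or $\ba^k\bb$-occurrence of the concatenation $\uw\uw'$ at the boundary between $\uw$ and $\uw'$, note that the monotonicity and equality-sign conditions and the sign $\sigma$ behave multiplicatively across the split, and match the resulting convolution identities against the multiplication rule of $R$ (where the paper's extra term $\cont(\ba^k\bb,\uw)$ corresponds to your $\bq(\bx)\bp'(0)$ with $\bp'(0)=1$). No gaps.
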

\begin{proof}
Write $\uw=(w_1^{\epsilon_1},\dots,w_h^{\epsilon_h})$ and $\uw'=(w_{h+1}^{\epsilon_{h+1}},\dots, w_{h+h'}^{\epsilon_{h+h'}})$. For every $\ba^k\bb$-occurence $s=(i_1,\dots,i_{k+1})$ in $\uw\uw'$, there is an index $0\le l\le k+1$ s.t. $i_j\le h$ for $j\le l$ and $i_j\ge h+1$ for $j\ge l+1$. If $l=k+1$, then $s$ is an $\ba^k\bb$-occurence in $\uw$. If instead $j<k+1$, then $s_1:=(i_1,\dots,i_l)$ is an $\ba^l$-occurence in $\uw$ and $s_2:=(i_{l+1},\dots,i_{k+1})$ is an $\ba^{k-l}\bb$-occurence in $\uw'$. Conversely for any $0\le j\le k+1$, any $\ba^l$-occurence $s_1$ in $\uw$ and any $\ba^{k-l}\bb$-occurence $s_2$ in $\uw'$, the concatenation $(s_1,s_2)$ is an $\ba^k\bb$-occurence in $\uw\uw'$. We have thus provided a bijection between $\ba^k\bb$-occurences in $\uw\uw'$ and the disjoint union of $\ba^k\bb$-occurences in $\uw$ with the set of triples consisting of an index $0\le l\le k$, an $\ba^l$-occurence $s_1$ in $\uw$ and $\ba^{k-l}\bb$-occurence $s_2$ in $\uw'$. Moreover, in the latter case, the sign is multiplicative,
i.e. $\sigma(s_1s_2)=\sigma(s)$. Therefore we have
\begin{equation}\label{eq:cnmult}
    \cont(\ba^k\bb,\uw\uw')=\cont(\ba^k\bb,\uw)+\sum_{0\le  l\le k}\cont(\ba^l,\uw)\cont(\ba^{k-l}\bb,\uw').
\end{equation}
A similar argument on $\ba^k$-occurences shows that 
\begin{equation}\label{eq:cnalphamult}
    \cont(\ba^k,\uw\uw')=\sum_{0\le  l\le k}\cont(\ba^l\uw)\cont(\ba^{k-l},\uw').
\end{equation}

Finally, we can $\Cont(\uw\uw')$ and $\Cont(\uw)\Cont(\uw')$ using the multiplication rules of $R$: the equality $C(ww')=C(w)C(w')$ reduces to the equalities \eqref{eq:cnmult} and \eqref{eq:cnalphamult} for all $k\ge 0$.
\end{proof}

\begin{proposition}\label{prop:contentinvariance}
The contents $\cont(\ba^k\bb,-)$, $\cont(\ba^k,-)$ and $\Cont$ are well-defined as functions from the free group $\bF\langle \ba,\bb\rangle$, i.e. they factor along the surjection $\iota\colon\Mon\langle\ba^{\pm1},\bb^{\pm1} \rangle\to\bF\langle\ba,\bb\rangle$.
\end{proposition}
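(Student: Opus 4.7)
My plan is a two-step reduction followed by a short finite check.

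\emph{Step 1: reduce to $\Cont$.} Since, by definition, $\cont(\ba^k, \uw)$ and $\cont(\ba^k\bb, \uw)$ are the coefficients of $\bx^k$ and $\bx^k\by$ in the power series $\Cont(\uw) \in R$, the proposition for the individual contents $\cont(\ba^k,-)$ and $\cont(\ba^k\bb,-)$ is an immediate consequence of the same statement for $\Cont$. So it suffices to show that $\Cont$ factors through the surjection $\iota\colon \Mon\langle\ba^{\pm 1},\bb^{\pm 1}\rangle \to \bF\langle\ba,\bb\rangle$.

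\emph{Step 2: reduce to trivial pairs.} The free group is the quotient of the free monoid by the congruence generated by the four ``trivial pair'' relations $\bd\bd^{-1}\sim \emptyset \sim \bd^{-1}\bd$ for $\bd\in\set{\ba,\bb}$. Therefore $\iota(\uw) = \iota(\uw')$ if and only if $\uw$ and $\uw'$ can be connected by a finite sequence of moves, each of which inserts or deletes a trivial pair $\uv = \bd\bd^{-1}$ or $\bd^{-1}\bd$ inside a larger word. Such a move changes $\uw = \uu_1\uu_2$ to $\uu_1 \uv \uu_2$, and the multiplicativity formula \eqref{eq:totalcontentmult} gives
\[
\Cont(\uu_1\uv\uu_2) \;=\; \Cont(\uu_1)\,\Cont(\uv)\,\Cont(\uu_2).
\]
Since $\Cont(\emptyset) = 1 \in R$ (only the empty $\ba^0$-occurrence contributes), the whole proposition reduces to checking $\Cont(\uv) = 1$ for each of the four trivial pairs $\uv$.

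\emph{Step 3: finite check.} For each pair $\uv = (\bd^{\epsilon}, \bd^{-\epsilon})$, I enumerate the $\ba^k$- and $\ba^k\bb$-occurrences in the length-two word. In every case, occurrences pair up: one variant involves only index $1$ and one ends at index $2$; the two carry opposite signs because $\epsilon_1 = -\epsilon_2$, so they cancel. The only surviving contribution is the empty $\ba^0$-occurrence, giving $\Cont(\uv) = 1$.

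I expect no real obstacle beyond careful sign bookkeeping in the final enumeration. The conceptual content of the proof is entirely in Step~2, where the multiplicativity already proved for $\Cont$ lets one reduce well-definedness on the free group to a finite check on the four defining relations.
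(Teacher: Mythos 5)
Your proposal is correct and follows essentially the same route as the paper: check that each of the four trivial pairs $(\ba,\ba^{-1})$, $(\ba^{-1},\ba)$, $(\bb,\bb^{-1})$, $(\bb^{-1},\bb)$ has total content $1$ by a cancellation of occurrences in pairs of opposite sign, then invoke multiplicativity \eqref{eq:totalcontentmult} to see that inserting or deleting such a pair leaves $\Cont$ unchanged, and finally read off the coefficients to get the statement for $\cont(\ba^k,-)$ and $\cont(\ba^k\bb,-)$. Only the bookkeeping of which occurrences pair up needs a little care in the finite check (e.g.\ for $(\ba^{-1},\ba)$ the repeated index is the second one), but this does not affect the argument.
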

\begin{proof}
The word $(\ba,\ba^{-1})\in \Mon\langle\ba^{\pm1},\bb^{\pm1} \rangle$ has no $\ba^k\bb$-occurences for any $k\ge 0$, and it has precisely two $\ba^k$-occurences of opposite signs for all $k\ge 1$; therefore $\Cont(\ba,\ba^{-1})=1$ which is equal to $\Cont()$, the total content of the empty word. Similarly, $\Cont(\uw)=1$ for each of the words $\uw=(\ba^{-1},\ba),(\bb,\bb^{-1})$ and $(\bb^{-1},\bb)$ in $\Mon\langle\ba^{\pm1},\bb^{\pm1} \rangle$. The multiplicativity of $\Cont$ implies that cancelling a pair of opposite letters in an unreduced word of $\Mon\langle\ba^{\pm1},\bb^{\pm1} \rangle$ does not change the total content; thus $\Cont$ is a well defined function on $\bF\langle \ba,\bb\rangle$. By reading the coefficients of $\Cont$, the same holds for  $\cont(\ba^k\bb,-)$ and $\cont(\ba^k,-)$, for all $k\ge0$.
\end{proof}
Proposition \ref{prop:contentinvariance} will be used in the following lemma.
\begin{lemma}\label{lem:commutingwitha}
Let $\uw\in\bF\langle\ba,\bb\rangle$. If $\Cont(\uw)=1+\bp(\bx)\by$, then 
$$\Cont([\ba,\uw])=1+\frac{\bx}{1-\bx}\bp(\bx)\by.$$
\end{lemma}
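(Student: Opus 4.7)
The plan is to reduce the statement to a direct algebraic computation in $R$, leveraging the multiplicativity of $\Cont$ on $\bF\langle\ba,\bb\rangle$ (Equation \eqref{eq:totalcontentmult} together with Proposition \ref{prop:contentinvariance}). First I would compute the basic building blocks. From Definition \ref{defn:contents}, for the single-letter word $(\ba)$ the only $\ba^k$-occurrence is the constant sequence $(1,1,\dots,1)$, which is allowed for every $k$ because $\epsilon_1=+1$, contributing sign $+1$; and no $\ba^k\bb$-occurrence exists. Hence $\Cont(\ba)=\sum_{k\ge 0}\bx^k=\frac{1}{1-\bx}$, so by multiplicativity and invariance on $\bF\langle\ba,\bb\rangle$ we must have $\Cont(\ba^{-1})=1-\bx$. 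Combining Lemma \ref{lem:inverses} with $\Cont(\uw)\Cont(\uw^{-1})=1$ then gives $\Cont(\uw^{-1})=1-\bp(\bx)\by$.

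Multiplicativity applied to $[\ba,\uw]=\ba\,\uw\,\ba^{-1}\,\uw^{-1}$ yields
\begin{equation*}
\Cont([\ba,\uw]) \,=\, \tfrac{1}{1-\bx}\cdot(1+\bp(\bx)\by)\cdot(1-\bx)\cdot(1-\bp(\bx)\by),
\end{equation*}
which I would evaluate step by step using the multiplication rule $(\bp+\bq\by)(\bp'+\bq'\by)=\bp\bp'+(\bp\bq'+\bq\bp'(0))\by$. The inner pair gives $(1+\bp(\bx)\by)(1-\bx)=(1-\bx)+\bp(\bx)\by$, since the cross-term $\bp(\bx)\by\cdot(-\bx)$ vanishes by $\by\bx=0$. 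Multiplying on the right by $(1-\bp(\bx)\by)$ then produces $(1-\bx)+\bx\bp(\bx)\by$, because the $\by$-coefficients combine as $\bp(\bx)-\bp(\bx)(1-\bx)=\bx\bp(\bx)$. Left-multiplication by $\frac{1}{1-\bx}$ finally distributes cleanly to give $1+\frac{\bx}{1-\bx}\bp(\bx)\by$, as desired.

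The only subtle point is that $R$ is non-commutative: $\bx$ commutes with every element of $\Z[[\bx]]$, but not with $\by$, and it is precisely the relation $\by\bx=0$ that both annihilates the would-be cross-terms and produces the asymmetric factor $\frac{\bx}{1-\bx}$ rather than causing the expression to collapse or symmetrise. No other difficulty arises, so the proof is essentially mechanical once the contents of the individual letters and of $\uw^{-1}$ are in hand.
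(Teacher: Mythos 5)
Your proof is correct and follows essentially the same route as the paper: compute $\Cont(\ba)=\tfrac{1}{1-\bx}$, obtain $\Cont(\ba^{-1})=1-\bx$ and $\Cont(\uw^{-1})=1-\bp(\bx)\by$ via multiplicativity together with Lemma \ref{lem:inverses}, and then expand $\Cont(\ba)\Cont(\uw)\Cont(\ba^{-1})\Cont(\uw^{-1})$ using the relation $\by\bx=0$. The only difference is the order of grouping in the final expansion, which is immaterial.
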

\begin{proof}
The word $\ba$ has no $\ba^k\bb$-occurence and precisely one $\ba^k$-occurence for every $k\ge 0$, carrying positive sign; thus $\Cont(\ba)=\sum_{k\ge0}\bx^k=\frac{1}{1-\bx}$.
By the multiplicativity of the total content, $\Cont(\uw^{-1})$ and $\Cont(\ba^{-1})$ are two-sided inverses of $\Cont(\uw)$ and $\Cont(\ba)$ in the monoid $1+R^{\ge1}$; thus by Lemma \ref{lem:inverses} we have $\Cont(\ba^{-1})=1-\bx$ and $\Cont(\uw^{-1})=1-\bp(\bx)\by$.
We can then compute
\begin{align*}
    \Cont([\ba,\uw])&=\Cont(\ba \uw \ba^{-1} \uw^{-1})
    =\Cont(\ba)\Cont(\uw)\Cont(\ba^{-1})\Cont(\uw^{-1})\\
    &=\frac{1}{1-\bx}(1+\bp(\bx)\by)(1-\bx)(1-\bp(\bx)\by)\\
    &=\frac{1}{1-\bx}(1+\bp(\bx)\by-\bx)(1-\bp(\bx)\by)\\
    &=\frac{1}{1-\bx}(1-\bx+\bx\bp(\bx)\by)\\
    &=1+\frac{\bx}{1-\bx}\bp(\bx)\by.
\end{align*}
\end{proof}
Putting together Proposition \ref{prop:reductiontocontents}, Corollary \ref{cor:formulasealphabeta}, Proposition \ref{prop:contentinvariance} and Lemma \ref{lem:commutingwitha}, we obtain the following corollary.
\begin{corollary}
$\langle t_n',r_n'\rangle=-1$.
\end{corollary}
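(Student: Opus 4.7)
My plan is to combine the reduction from Proposition \ref{prop:reductiontocontents}, the formula from Corollary \ref{cor:formulasealphabeta} identifying $\theta(f_1)^{\epsilon(f_1)}\dots\theta(f_\mu)^{\epsilon(f_\mu)}$ (as an element of $\bF\langle\ba,\bb\rangle$) with the iterated commutator $[\ba,[\ba,\dots[\ba,\bb]\dots]]$ in which $\ba$ appears $n-2$ times, together with the invariance of contents under free-group relations (Proposition \ref{prop:contentinvariance}), to rewrite
\[
\langle t_n',r_n'\rangle \;=\; -\cont\bigl(\ba^{n-2}\bb,\ [\ba,[\ba,\dots[\ba,\bb]\dots]]\bigr).
\]
The whole problem thus reduces to evaluating this single content.

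I would compute it by instead computing the total content $\Cont$ of the iterated commutator, which packages all contents into one element of $R$, and then reading off the coefficient of $\bx^{n-2}\by$. The base case is $\Cont(\bb) = 1 + \by$, immediate from Definition \ref{defn:contents} since the only $\ba^k$- or $\ba^k\bb$-occurrence in $\bb$ is the one with $k = 0$. Lemma \ref{lem:commutingwitha} is tailor-made for the inductive step: it computes the effect of conjugating-and-commuting with $\ba$ on a total content of the form $1 + \bp(\bx)\by$. A short induction on the number $k$ of nested commutators with $\ba$, starting from $\bp(\bx) = 1$ and repeatedly multiplying by $\bx/(1-\bx)$, yields
\[
\Cont\bigl([\ba,[\ba,\dots[\ba,\bb]\dots]]\bigr) \;=\; 1 + \frac{\bx^{k}}{(1-\bx)^{k}}\by,
\]
where $\ba$ is repeated $k$ times on the left.

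Setting $k = n-2$, the sought coefficient of $\bx^{n-2}\by$ on the right is the constant term of $(1-\bx)^{-(n-2)}$, which is $1$; hence $\cont(\ba^{n-2}\bb,[\ba,[\ba,\dots[\ba,\bb]\dots]]) = 1$ and $\langle t_n',r_n'\rangle = -1$. I do not anticipate a genuine obstacle: all nontrivial ingredients (the geometric-to-combinatorial reduction, the explicit word for $e_\alpha$, the invariance of $\Cont$ on the free group, and the commutator-with-$\ba$ recursion) are already in place. The only thing requiring care is keeping the induction clean and verifying that the coefficient extraction comes out to exactly $1$, rather than with a spurious sign or combinatorial factor.
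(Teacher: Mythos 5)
Your proposal is correct and follows essentially the same route as the paper: reduce via Proposition \ref{prop:reductiontocontents}, Corollary \ref{cor:formulasealphabeta} and Proposition \ref{prop:contentinvariance} to a single content, then iterate Lemma \ref{lem:commutingwitha} from $\Cont(\bb)=1+\by$ to get $1+\bigl(\tfrac{\bx}{1-\bx}\bigr)^{n-2}\by$ and read off the coefficient of $\bx^{n-2}\by$, which is $1$. The coefficient extraction (constant term of $(1-\bx)^{-(n-2)}$ equal to $1$) matches the paper's "modulo $\bx^{n-1}\by$" reduction.
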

\begin{proof}
Repeated applications of Lemma \ref{lem:commutingwitha} show that 
$$\Cont([\ba,[\ba,\dots[\ba,\bb]\dots]])=1+\Big(\frac{\bx}{1-\bx}\Big)^{n-2}\by,$$
where $\ba$ appears $n-2$ times in the iterated commutator. Modulo $\bx^{n-1}\by$, this expression is equal to $1+\bx^{n-2}\by$, so that
$\cont(\ba^{n-2}\bb,[\alpha,[\alpha,\dots[\alpha,\beta]\dots]])=1$. By Corollary \ref{cor:formulasealphabeta} we have
\[
[\ba,[\ba,\dots[\ba,\bb]\dots]]=\theta(f_1)^{\epsilon(f_1)}\cdot\dots\cdot\theta(f_\mu)^{\epsilon(f_\mu)}\in\pi_1(\cF_+,p_0)\cong\bF\langle\ba,\bb\rangle,
\]
and by Proposition \ref{prop:contentinvariance} we obtain the equality
\[
\cont(\ba^{n-2}\bb,[\ba,[\ba,\dots[\ba,\bb]\dots]])=\cont(\ba^{n-2}\bb,\theta(f_1)^{\epsilon(f_1)}\cdot\dots\cdot\theta(f_\mu)^{\epsilon(f_\mu)}).
\]
Finally, Proposition \ref{prop:reductiontocontents} gives the equality
\[
\langle t_n',r_n'\rangle=-\cont(\ba^{n-2}\bb,\theta(f_1)^{\epsilon(f_1)}\cdot\dots\cdot\theta(f_\mu)^{\epsilon(f_\mu)}).
\]
\end{proof}



\section{Configuration spaces of closed surfaces}
In this section we study configuration spaces of closed surfaces, and prove Theorem \ref{thm:main2}.
\subsection{A list of subspaces of \texorpdfstring{$\Sigma_{g+1}$}{Sg+1}}
Let $g\ge2$ and regard $\Sigma_{g+1}$ as the closed surface obtained from $\surf=\Sigma_{g,1}$ by glueing the surface $\cT:=\Sigma_{1,1}$ along the boundary. We denote by $\mcT$ the interior of $\cT$, which is an open subset of $\Sigma_{g+1}$.
Let $l\subset\mcT$ be an oriented simple closed curve; we fix an oriented, simple closed curve $\hat d\subset\Sigma_{g+1}$ intersecting $l$ once, transversely and positively, and such that $\hat d\cap \surf=d$. See Figure \ref{fig:arcsandcurvesclosed}.
\begin{figure}
  \centering
  \begin{tikzpicture}[xscale=.8,yscale=.8, decoration={markings,mark=at position 0.1 with {\arrow{>}}}]
  
\fill[looseness=1, yellow!70!white] (.3,0) to[out=90, in=90] ++(10.9,0) to[out=-90,in=-90] ++(-10.9,0)    to[out=0,in=180] ++(.3,0)  to[out=-90, in=-90] ++(10.2,0) to[out=90,in=90] ++(-10.2,0)    to[out=180,in=0] ++(-.3,0);
\begin{scope}[shift={(15,0)},xscale=-1]
    \fill[looseness=1, yellow!70!white] (.3,0) to[out=90, in=90] ++(10.9,0) to[out=-90,in=-90] ++(-10.9,0)    to[out=0,in=180] ++(.3,0)  to[out=-90, in=-90] ++(10.2,0) to[out=90,in=90] ++(-10.2,0)    to[out=180,in=0] ++(-.3,0);
\end{scope}

    \fill[looseness=1.1, red!80!white, opacity=.5] (11.3,-.2) to[out=90, in=90] ++(3.05,0) to[out=-90,in=-90] ++(-3.05,0) to[out=0, in=180] ++(.4,0) to[out=-90,in=-90] ++(2.3,0) to[out=90,in=90] ++(-2.3,0) to[out=180,in=0] ++(-.4,0); 

    \draw[looseness=1.1, postaction={decorate}] (11.5,-.2) to[out=90, in=90] ++(2.7,0)  node[left]{\tiny $b$};

    \draw[looseness=1, thick,blue] (.5,0) to[out=90, in=90] node[below]{\tiny$\beta$} ++(10.5,0)  to[out=-90,in=-90] ++(-10.5,0);
    \draw[looseness=.7, thick, blue] (10.4,.2) to[out=50, in=60] (9,1.1) node[left]{\tiny$\beta'$}; 
    
\begin{scope}[shift={(15,0)},xscale=-1]
    \draw[looseness=1, thick, blue] (.5,0) to[out=90, in=90] node[below]{\tiny$\alpha$} ++(10.5,0) to[out=-90,in=-90]  ++(-10.5,0);
    \draw[looseness=.7, thick, blue] (10.4,.2) to[out=50, in=60] (9,1.1) node[right]{\tiny$\alpha'$}; 
\end{scope}

    \fill[looseness=1, opacity=.8, white] (8,0) to[out=40,in=180] ++(3,2.5) to[out=0,in=140] ++(3,-2.5) to[out=180, in=0] (11.8,0) to[out=130,in=50] (10.2,0) to[out=180, in=0] (8,0);
\begin{scope}[shift={(15,0)},xscale=-1]
    \fill[looseness=1, opacity=.8, white] (8,0) to[out=40,in=180] ++(3,2.5) to[out=0,in=140] ++(3,-2.5) to[out=180, in=0] (11.8,0) to[out=130,in=50] (10.2,0) to[out=180, in=0] (8,0);
\end{scope}

    \draw[looseness=.7, thick, blue] (9,1.1) to[out=-120,in=-130] (10.4,.2);
\begin{scope}[shift={(15,0)},xscale=-1]
    \draw[looseness=.7, thick, blue] (9,1.1) to[out=-120,in=-130] (10.4,.2);
\end{scope}

    \fill[red!70!white, opacity=.6, looseness=.9] (1.5,0) to[out=90, in=90] ++(5.1,0) to[out=-90,in=-90] ++(-5.1,0) node[left]{\tiny$A$} to[out=0,in=180] ++(1.6,0) to[out=-90,in=-90] ++(2.4,0) to[out=90,in=90] ++(-2.4,0) to [out=180,in=0] ++(-1.6,0);
    \fill[looseness=1.1, red!80!white, opacity=.5] (9.3,0) to[out=90, in=90] ++(3.1,0) to[out=-90,in=-90] ++(-3.1,0) to[out=0,in=180] ++(.4,0) to[out=-90,in=-90] ++(2.3,0) to[out=90,in=90] ++(-2.3,0) to[out=180,in=0] ++(-.4,0);
    \fill[looseness=2, red!40!white] (11.1,-.9) to[out=90,in=90] ++(1,0) to[out=-90,in=-90] ++(-1,0);

  
    \draw[looseness=1, thick] (1,0) to[out=40,in=180] ++(3,2.5) to[out=0,in=140] ++(3,-2.5);
    \draw[looseness=1, thick] (3.2,0) to[out=50,in=130] ++(1.6,0);
    
    \draw[looseness=1, thick] (8,0) to[out=40,in=180] ++(3,2.5) to[out=0,in=140] ++(3,-2.5);
    \draw[looseness=1, thick] (10.2,0) to[out=50,in=130] ++(1.6,0);

    \draw[looseness=.9, postaction={decorate}] (1.75,0) node[below]{\tiny$a_1$}  to[out=90, in=90]  ++(4.6,0)  to[out=-90,in=-90] ++(-4.6,0);
    \draw[looseness=.85, postaction={decorate}] (2.1,0) node[below]{\tiny$a_2$} node[right]{...} to[out=90, in=90]  ++(4.1,0) to[out=-90,in=-90] ++(-4.1,0);
    \draw[looseness=.9, postaction={decorate}] (2.8,0) node[below]{\tiny$a_{n\!-\!2}$}  to[out=90, in=90]  ++(3.0,0) to[out=-90,in=-90] ++(-3.0,0);

    \draw[looseness=1.1, postaction={decorate}] (9.5,0)  node[left]{\tiny $c$}  to[out=-90, in=-90] ++(2.7,0) to[out=90,in=90] ++(-2.7,0);

    \fill[looseness=1, orange, opacity=.4] (7.2,-3.5) to[out=90, in=-90] (8.5,0) to [out=90, in=90] (13.2,0) to[out=-90, in=90] (13.2,-2.38) to [out=-90,in=0] (10,-6.7) to[out=180,in=-90] (7.2,-3.5) to[out=10,in=-190] (7.6,-3.51) to[out=-90,in=180] (10,-6.3) to[out=0,in=-90]    
    (12.8,-2.55) to[out=90,in=-90] (12.8,0) to[out=90, in=90] (8.9,0) to[out=-90, in=90] (7.6,-3.51) to [out=-190, in=10] (7.2,-3.5);
    
    \draw[looseness=1.1,] (11.5,-.2) to[out=-90, in=-90] ++(2.7,0);

    \draw[looseness=1, magenta, postaction={decorate}] (13,-2.5) to[out=90, in=-90] (13,0) to[out=90, in=90] (8.7,0) to[out=-90, in=90] node[left]{\tiny$\hat d$} (7.43,-3.5) to[out=-90,in=180] (10,-6.5) to[out=0,in=-90] (13,-2.5); 

    \node at (11.7,-.7) {\tiny$\bullet$}; \node at (11.6,-.9) {\tiny$P$};
    \fill[opacity=.4, looseness=2] (11.2,-.9) to[out=90,in=90] ++(.8,0) to[out=-90,in=-90] ++(-.8,0);
    
    \draw[looseness=.8,thick] (0,0) to[out=-90,in=-90] ++(15,0) to[out=90,in=90] ++(-15,0);
    
\node at (4,-1.5) {\tiny$R_{A,\alpha}$};
\node at (10.3,-1) {\tiny$R_{c,\beta}$};
\node at (12.6,-2) {\tiny$R_{d,\alpha}$};
\node at (8,-3.2) {\tiny$R_{d,\alpha\beta}$};
\node at (13.3,-0.8) {\tiny$R_{d,b}$};

    \draw[looseness=.8] (-.2,0) to[out=90,in=90] (15.2,0);
    \draw[looseness=1.2] (-.2,0) to[out=-90,in=180] (7.5,-7.1) to[out=0,in=-90] (15.2,0);

\begin{scope}[shift={(3.5,-4.4)},yscale=-1]
\begin{scope}[shift={(15,0)},xscale=-1]
    \fill[looseness=1, opacity=.8, white] (8,0) to[out=40,in=180] ++(3,2.5) to[out=0,in=140] ++(3,-2.5) to[out=180, in=0] (11.8,0) to[out=130,in=50] (10.2,0) to[out=180, in=0] (8,0);
\end{scope}
    \draw[looseness=1, thick] (1,0) to[out=40,in=180] ++(3,2.5) to[out=0,in=140] ++(3,-2.5);
    \draw[looseness=1, thick] (3.2,0) to[out=50,in=130] ++(1.6,0);
\end{scope}

    \fill[looseness=1.1, red!80!white, opacity=.5] (5.8,-5) to[out=90, in=90] ++(3.4,0) to[out=-90,in=-90] ++(-3.4,0) to[out=0,in=180] ++(.4,0) to[out=-90,in=-90] ++(2.6,0) to[out=90,in=90] ++(-2.6,0) to[out=180,in=0] ++(-.4,0);
    \draw[looseness=1.1, postaction={decorate}] (6,-5)  node[left]{\tiny $l$}  to[out=90, in=90] ++(3,0) to[out=-90,in=-90] ++(-3,0);

    \node at (7,-3.8) {\tiny$R_{\hat d,l}$};
    \node at (2,-1.5) {$\surf$};
    \node at (2,-4) {$\cT$};

  \end{tikzpicture}
  \caption{The surface $\Sigma_{g+1}$ and some relevant subspaces of it, analogous to Figure \ref{fig:arcsandcurves}.}
    \label{fig:arcsandcurvesclosed}
\end{figure}
We also fix small tubular neighbourhoods $U_l$ and $U_{\hat d}$ of $l,\hat d\subset\Sigma_{g+1}$, respectively, and let $R_{d,l}$ be the intersection $ U_l\cap \bar U_{\hat d}$.

The open inclusion $\msurf\sqcup\mcT\subset\Sigma_{g+1}$ gives rise to an open inclusion $F_n(\msurf)\times F_{\set{n+1}}(\mcT)\subset F_{n+1}(\Sigma_{g+1})$ (see Notation \ref{nota:FSZ}). We note that $F_{n+1}(\Sigma_{g+1})$ is an oriented $2n+2$-manifold, and thus we can define cohomology classes of $F_{n+1}(\Sigma_{g+1})$ by giving a properly embedded, oriented submanifold. 
\subsection{The homology, cohomology and mapping classes in the closed case}
\begin{definition}
We let $\hat x\in H_{n+1}(F_{n+1}(\Sigma_{g+1}))$ be the image of the cross product homology class $x\times [l]$, where $x\in H_n(F_n(\surf))$ is the homology class from Subsection \ref{subsec:x}, and $[l]\in H_1(F_1(\mcT))$ is the fundamental homology class of $l$.
\end{definition}
\begin{definition}
We let $\hat N_2\subset F_{n+1}(\Sigma_{g+1})$ be the subspace of configurations $(z_1,\dots,z_{n+1})$ such that all $z_i$ lie on $\hat d$, and up to cyclic permutations they occur in this order along $\hat d$. We note that $\hat N_2$ is an oriented $n+1$-submanifold of $F_{n+1}(\Sigma_{g+1})$, as it admits a proper parametrisation by $S^1\times\Delta^n$.
We let $\hat y\in H^{n+1}(F_{n+1}(\Sigma_{g+1}))$ be the cohomology class represented by $\hat N_2$.
\end{definition}
\begin{definition}
We let $\hat\Phi\colon\Sigma_{g+1}\to\Sigma_{g+1}$ be the diffeomorphism obtained by extending $\Phi$ with the identity of $\cT$. We let $\hat\phi\in \Gamma_{g+1}$ be the corresponding mapping class.
\end{definition}
Note that $\hat\phi\in J_{g+1}(n)$. We want to prove that $\hat\phi$ acts non-trivially on $\hat x$, and we will again prove that the Kronecker pairing $\langle \hat\phi_*(\hat x)-\hat x,\hat y\rangle$ is non-zero. We notice that $\hat x$ is supported on configurations $(z_1,\dots,z_{n+1})$ such that $z_1\in A$, and in particular all such configurations do not belong to $\hat N_2$: it follows that $\langle \hat x,\hat y\rangle=0$.

\subsection{Putting fog in the closed case}
\begin{definition}
We let $\hat\scU\subset F_{n+1}(\Sigma_{g+1})$ be the open subspace $F_n(\msurf)\times F_{\set{n+1}}(U_l)$, i.e. the subspace of configurations $(z_1,\dots,z_{n+1})$ satisfying $z_1,\dots,z_n\in\msurf$ and  $z_{n+1}\in U_l$.
We let $\hat\scV\subset\scU$ be the closed subspace of configurations satisfying in addition that $z_{n+1}\not\in\mR_{\hat d,l}$.
\end{definition}
We note that both subspaces $\hat\scV\subset\hat\scU\subset F_{n+1}(\Sigma_{g+1})$ are invariant under the action of $\hat\Phi$; moreover $\hat x$ is the image of a homology class $\hat x'=x\times [l]\in H_{n+1}(\scU)$, so that we can compute $\langle \hat\phi_*(\hat x),\hat y\rangle_{F_{n+1}(\Sigma_{g+1})}$ by restricting $\hat y$ to the space $\hat\scU$, i.e. as $\langle \hat x',\hat y|_{\hat\scU}\rangle_{\hat\scU}$.

We then note that the intersection of $\hat N_2$ with $\scU=F_n(\msurf)\times F_{\set{n+1}}(U_l)$ is equal to the product $N_2\times (\hat d\cap U_l)$, and is in particular disjoint from $\hat\scV$. We can therefore denote $\hat y'\in H^{n+1}(\hat\scU,\hat\scV)$ be the relative cohomology class supported by $\hat N_2\cap \hat\scU$,
project $\hat x'$ to a relative homology class in $H_{n+1}(\hat\scU,\hat\scV)$, that by abuse of notation we still call $\hat x'$, and compute $\langle \hat x',\hat y|_{\hat\scU}\rangle_{\hat\scU}$ as $\langle \hat x',\hat y'\rangle_{\hat\scU\mathrm{rel.}\hat\scV}$.

Using the product decomposition of the couple $(\hat\scU,\hat\scV)=F_n(\msurf)\times(U_l,U_l\setminus \mR_{\hat d,l})$, we can compute the previous Kronecker product as $\langle x,y\rangle_{F_n(\msurf)}\cdot\langle [l],\hat d\cap U_l\rangle_{U_l\mathrm{rel.}U_l\setminus \mR_{\hat d,l}}$. And in the previous product the second factors is equal to $1$, whereas the first factor is equal to $-1$ by Theorem \ref{thm:main}. This completes the proof of Theorem \ref{thm:main2}.

\bibliographystyle{amsalpha}

\bibliography{biblio}

\end{document}